\definecolor{marin}{rgb}   {0.,   0.3,   0.7} 
\definecolor{rouge}{rgb}   {0.8,   0.,   0.} 
\definecolor{sepia}{rgb}   {0.8,   0.5,   0.} 
\newcommand\N{\mathbb{N}}
\newcommand\T{\mathbb{T}}
\newcommand\Z{\mathbb{Z}}
\newcommand\R{\mathbb{R}}
\newcommand\C{\mathbb{C}}
\newcommand{\dd}{\mathrm{d}}
\newcommand{\enstq}[2]{\left\{#1~\middle|~#2\right\}}
\newcommand\eps{\varepsilon}
\renewcommand{\Re}{\operatorname{Re}}
\renewcommand{\Im}{\operatorname{Im}}
\newcommand\sinc{\mathrm{sinc}}
\newtheorem{theorem}{Theorem}
\newtheorem{proposition}[theorem]{Proposition}
\newtheorem{corollary}[theorem]{Corollary}
\newtheorem{lemma}[theorem]{Lemma}
\title[Growth of Sobolev norms and strong convergence for DNLS]{Growth of Sobolev norms and strong convergence for the discrete nonlinear Schrödinger equation}
\author{Quentin Chauleur}
\address{INRIA Lille, Univ Lille \& Laboratoire Paul Painlevé,
CNRS UMR 8524 Lille, Cité Scientifique, 59655 Villeneuve-d'Ascq, France. }
\email{Quentin.Chauleur@math.cnrs.fr}
\keywords{Discrete nonlinear Schrödinger equation, growth of Sobolev norms, continuum limit}
\subjclass{35Q55, 37K60}
\begin{document}

\maketitle

\begin{abstract}
We show the strong convergence in arbitrary Sobolev norms of solutions of the discrete nonlinear Schrödinger on an infinite lattice towards those of the nonlinear Schrödinger equation on the whole space. We restrict our attention to the one and two-dimensional case, with a set of parameters which implies global well-posedness for the continuous equation. Our proof relies on the use of bilinear estimates for the Shannon interpolation as well as the control of the growth of discrete Sobolev norms that we both prove.
\end{abstract}

\section{Introduction}

We consider the discrete nonlinear Schrödinger equation
\begin{equation} \label{DNLS} \tag{DNLS}
i \partial_t u + \Delta_h u = \lambda |u|^{p-1} u,
\end{equation}
where $u : \R \times h\Z^d \rightarrow \C$ with $u(0,\cdot)=u_0$. Here $h>0$ denotes the stepsize of the lattice $h\Z^d$ and 
\[ \Delta_h u(a)=\sum_{j=1}^d \frac{u(a+h e_j)+ u(a-h e_j)-2 u(a)}{h^2}  \]
denotes the discrete Laplace operator for $a \in h\Z^d$ with the canonical basis $(e_j)_{1\leq j \leq d}$ on $\R^d$. We also take $p=2n+1$ with $n\in \N^*$ as an odd integer. For a positive coefficient $\lambda >0$ the equation is called \textit{defocusing}, and \textit{focusing} for $\lambda< 0$.

Equation \eqref{DNLS} has been extensively studied over the past few years, especially in the context of a fixed stepsize $h=1$. A first original result concerning its dynamical properties was obtained by Stefanov and Kevrekidis in \cite{kevrekidis2005}, and reflects weaker dispersion estimates than in the continuous case. In fact, this pathological behavior is induced by resonances of the discrete geometrical setting, or more precisely by the fact that the \textit{symbol} of the discrete operator $\Delta_h$ has critical points and display a lack of convexity, which both appears to be key ingredients for the proof of standard dispersive estimates (we refer to \cite{zuazua2009} for a complete and detailed proof of this feature). Results concerning the existence of standing waves in the focusing case \cite{bambusi2010,jenkinson2016} has naturally followed, and the question of the non-existence of traveling waves, despite long time stability \cite{bernier_traveling}, is still an open question related to a complex phenomenon called the \textit{Peierls-Nabarro barrier} \cite{oxtoby2007}.

From the physical point of view, equation \eqref{DNLS} has been a relevant model both in nonlinear optics, particularly in the context of optical waveguides \cite{eisenberg2002,peschel2002}, or for the modelization of Bose-Einstein condensates trapped in optical lattices formed by laser beams \cite{cataliotti2001, cataliotti2003}. On the other hand, from a mathematical perspective, equation \eqref{DNLS} can also be seen as a first spatial discretization step into the rigorous numerical analysis of the well-known nonlinear Schrödinger equation
\begin{equation} \label{NLS} \tag{NLS}
i \partial_t u + \Delta u = \lambda |u|^{p-1} u
\end{equation}
 on the whole space $\R^d$. In this work, we will in particular be interested by the limit $h \rightarrow 0$ of equation \eqref{DNLS} towards equation \eqref{NLS}, which is usually referred in the literature as the \textit{continuum limit}. This limit has first been investigated in the seminal work of Kirkpatrick, Lenzmann and Staffilani \cite{staffilani2013}, where the authors show the $L^2$ weak convergence of solutions of equation \eqref{DNLS} towards those of \eqref{NLS} using the Banach-Alaoglu theorem as well as discrete Sobolev embedding. The $L^2$ strong convergence of such solutions were then recently achieved by Hong and Yang in \cite{hong2019strong} alongside precise convergence rates in $h$, where the proof is based on uniform discrete Strichartz estimates that the authors had previously shown in \cite{hong2019strichartz} and the use of Gronwall lemma. 
 
Here, we will be interested in the strong convergence in Sobolev spaces $H^s$ for an arbitrary regularity $s \geq 0$. Note that up to the author's knowledge, the only other work dealing with strong Sobolev convergence for continuum limit of such systems is \cite{grande2019continuum} and for the special case $s =\frac12-\frac{1}{2(p-1)}$ with $d=1$, in the context of long-range spatial interactions and time memory effect. Also note that weak and strong $L^2$ convergence for long-range interactions in the one dimensional case are tackle in respectively \cite{staffilani2013} and \cite{hong2019strong}, and has only been generalized very recently to the two dimensional case~\cite{choi2023}.
 
Our approach will follow the strategy of \cite{hong2019strong}, with two main different features. First, we will use the Shannon interpolation (introduced in \cite{bernier_sobolev}) of pointwise discrete function rather than the finite volume type discretization used in \cite{hong2019strong} in order to compare our discrete solution with the continuous one, as it is naturally better suited for Sobolev spaces $H^s$. Secondly, the usual conservation of the $L^2$ norm of the solution of both the discrete equation \eqref{DNLS} and the continuous one \eqref{NLS}, which is broadly used in \cite{hong2019strong}, will be replaced along the proof by the evolution of the Sobolev norms of the solution, which are far from being conserved in the nonlinear setting. In fact, our analysis will require estimates on the growth of Sobolev norms for solutions of both equation \eqref{DNLS} and~\eqref{NLS}. 

On the continuous level, control on the growth of high Sobolev norms of nonlinear dispersive PDEs has been an intensive and still ongoing topic of research, motivated by the study of a nonlinear phenomenon called \textit{weak turbulence}, which basically expects a transfer from low frequencies to high ones (sometimes also referred as \textit{forward cascade}). The literature on this area of research is large, and we should mention, without unrealistically trying to be exhaustive, the seminal works of Bourgain \cite{bourgain1996} and Staffilani \cite{staffilani1997} continued by Sohinger \cite{sohinger2011,sohinger2012} on both periodic and whole space settings, as well as the recent work of Planchon, Tzvetkov and Visciglia \cite{planchon2017} which treats the case of compact manifolds. Note that these results typically provide polynomial or exponential bounds, and are only available in low space dimension $1\leq d \leq 3$ for specific restrictive values of the nonlinearity~$p$ as well as its sign~$\lambda$. One should also mention the particular \textit{integrable} case of the cubic Schrödinger equation (with $p=3$ in equation \eqref{NLS}) in dimension $d=1$, which yields uniform in time estimates for each Sobolev norm $H^m$ with $m \in \N$.

On the other hand, in the discrete framework, only the recent paper of Bernier \cite{bernier_sobolev} actually tackles this problem, in the particular case of a cubic nonlinearity in one dimension. Note that the particular discretization \eqref{DNLS} do not preserves the complete integrability of its continuous counterpart \eqref{NLS} in this setting, at contrary to the usual Ablowitz-Ladik model~\cite{ablowitz2004}.

 Our analysis will be performed on a particular set of parameters for $p$, $d$ and $\lambda$ that we state here:
 \begin{equation} \label{set_parameters}
 \left\{
 \begin{aligned}
 & p=2n+1, \quad n \in \N^* \quad & \text{for} \ \lambda=1 &\ \text{and} \ d=1,2,\\
 & p=3 \quad & \text{for} \ \lambda =-1 &\ \text{and} \ d=1.
 \end{aligned}
 \right.
 \end{equation}
 In particular, we limit our attention to the one and the two-dimensional case for the defocusing case, and only to the cubic nonlinearity in one dimension for the focusing case, where the result of \cite{bernier_sobolev} is available. We also restrict ourselves to odd nonlinearity powers due to technical reasons. Note that the cubic three dimensional case, which was covered in the analysis of \cite{hong2019strong}, will not be handled in our context because of the weak dispersion estimates available in the discrete setting. Some further comments on the three dimensional case will be made in Section \Ref{results_section}. On the other hand, we emphasize that for $d=1,2$ in the defocusing case, we cover the full range of parameters for global well-posedness on the continuous equation \eqref{NLS}. In particular, the global existence of solutions of both \eqref{DNLS} and \eqref{NLS} are guaranteed under the set of parameters \eqref{set_parameters} (see for instance the classical reference~\cite{cazenave}).
 
 This paper is organized as follows. In Section~\ref{results_section}, we will recall some notations and properties of functional analysis on discrete spaces in order to state our main results Theorems~\ref{growth_discrete_sobolev_norms_theorem} and~\ref{convergence_sobolev_DNLS_theorem}. In Section~\ref{shannon_section}, we show some fundamental properties of the Shannon interpolation with respect to discrete Sobobev spaces, with a direct application to the interpolated flow of the discrete linear Schrödinger equation~\eqref{DNLS} taking $\lambda=0$. We then adapt the strategy of the \textit{modified energies} of \cite{planchon2017} to the case of discrete spaces in Section \ref{bounds_sobolev_section} in order to prove Theorem~\ref{growth_discrete_sobolev_norms_theorem}. Gathering all these results, we then conclude by proving Theorem~\ref{convergence_sobolev_DNLS_theorem} in Section~\ref{convergence_section}. We recall some important functional inequalities in both continuous and discrete spaces in Appendix~\ref{appendix_section} which will be used throughout all this paper.

\section{Algebraic context and main results} \label{results_section}

\subsection{Discrete Lebesgue and Sobolev spaces}

We consider a function $g : h\Z^d \rightarrow \C$ with $d \geq 1$. Following \cite{ignat2006}, we denote respectively by $L^p(h\Z^d)$, for $1 \leq p < \infty$, and $L^{\infty}(h \Z^d)$ (or sometimes more compactly $L^p_h$ and $L^{\infty}_h$) the discrete Lebesgue spaces of integrable functions induced by the norms
\[ \| g \|_{L^p(h\Z^d)}^p = h^d \sum_{a \in h\Z^d} |g(a)|^p \quad \text{and} \quad \| g \|_{L^{\infty}(h\Z^d)} = \sup_{a \in h\Z^d} |g(a)|  . \]
Contrary to the continuous case, these spaces are embedded, namely~$L^1(h\Z^d) \subset L^2(h\Z^d) \subset \ldots \subset L^{\infty}(h \Z^d)$, but obviously not uniformly in $h$. In particular, $L^2(h \Z^d)$ is a Hilbert space induced by the scalar product
\[ \langle f,g \rangle_h = h^d \sum_{a \in h \Z^d}  f(a) \overline{g}(a) \]
for $f$, $g :h \Z^d \rightarrow \C$. Denoting respectively the \textit{forward} and \textit{backward difference operators} in the direction $e_j$ for $1\leq j \leq d$ by
\[ \nabla_{h,j}^+ g(a)=\frac{g(a+he_j)-g(a)}{h} \quad \text{and} \quad  \nabla_{h,j}^- g(a)=\frac{g(a)-g(a-he_j)}{h}  \]
for $a \in h \Z^d$, one can  define the discrete Sobolev spaces $W^{1,p}(h\Z^d)$ or $W^{1,p}_h$ for all $1 \leq p < \infty$ by
\[ \| g \|_{W^{1,p}(h\Z^d)}^p = \| g \|_{L^p(h\Z^d)} +\sum_{j=1}^d \|\nabla_{h,j}^+ g \|_{L^p(h\Z^d)}.  \]
We also naturally define the \textit{forward} and \textit{backward discrete gradients} 
\[ \nabla_h^+ = \left( \nabla_{h,1}^+,\ldots,\nabla_{h,d}^+  \right)^{\top} \quad \text{and} \quad \nabla_h^- = \left( \nabla_{h,1}^-,\ldots,\nabla_{h,d}^-  \right)^{\top}.   \]
Using the definition of the discrete Laplace operator $\Delta_h$ given in our introduction, we can also define for all $m \in \N$ the discrete homogeneous and inhomogeneous Sobolev norm 
\[ \| g \|^2_{\dot{H}^m(h\Z^d)}= \langle (-\Delta_h)^m g  ,g \rangle_{h}  \quad \text{and} \quad \| g \|^2_{H^m(h\Z^d)} = \sum_{k=0}^m \| g \|^2_{\dot{H}^k(h\Z^d)}.  \]
We obviously have equivalence between the norms $\| \cdot\|_{W^{1,2}_h}$ and $\| \cdot\|_{H^1_h}$, uniformly in $h$. Furthermore, note that all the above norms are in fact equivalent, and in particular we get that for all $g \in L^2(h\Z^d)$,
\[ \| g \|_{\dot{H}^m(h\Z^d)} \leq \left( \frac{2\sqrt{m}}{h} \right)^d  \| g \|_{L^2(h\Z^d)} \]
for all $m\geq 1$, as a direct consequence of the triangle inequality. However, these bounds are not uniform in $h$, and become trivial at the limit $h \rightarrow 0$.

\subsection{Discrete Fourier Transform}

We now recall the definition of the \textit{discrete Fourier transform} of a function $g \in L^2(h \Z^d)$, namely
\[ \widehat{g} (\xi) = h^d \sum_{a \in h \Z^d} g(a) e^{-ia \cdot \xi},  \]
for $ \xi \in \T_h^d=\R^d / \left(\frac{2 \pi}{h} \Z^d \right)$. In particular we see that the discrete Fourier defines an isometry from $L^2(h\Z^d)$ to $L^2(\T_h^d)$, and that we have an inversion formula: for all $a \in h\Z^d$,
\[ g(a)= \frac{1}{(2 \pi)^d}\int_{\T_h^d} \widehat{g}(\xi) e^{ia \cdot \xi} \dd \xi.   \]
As proved in \cite{stevenson1991}, by the Hilbert scale property, the definition of discrete Sobolev spaces $H^s(h\Z^d)$ can be extended to any real $s \in \R$ through the norm
\[ \| g \|^2_{H^s(h\Z^d)} =  \frac{1}{(2 \pi) ^d} \int_{\T_h^d} \left( 1+\frac{4}{h^2} \sum_{j=1}^d \sin  \left( \frac{h \xi_j }{2}  \right)^2 \right)^s \left| \widehat{g}( \xi ) \right|^2 \dd \xi, \]
which is equivalent to the definition of discrete Sobolev spaces given previously when~$s \in \N$. With our choice of convention, note that we have for the discrete convolution product the property
\[ \widehat{(f \ast_h g)}(\xi) = \widehat{f}(\xi) \widehat{g}(\xi) \quad \text{where} \quad (f \ast_h g)(a)=h^d \sum_{b\in h\Z^d} f(b) g(a-b).   \]

\subsection{Main results}
As $L^2(h\Z^d)$ is a Banach algebra (which is not the case in the continuous setting), Cauchy Lipschitz Theorem can be applied to get the local well-posedness of equation \eqref{DNLS}, and the $L^2(h\Z^d)$ norm is a constant of motion, namely for any $u$ solution of equation \eqref{DNLS} with initial condition $u(0)=u_0$, we have
\begin{equation} \label{mass_conservation_eq}
\| u(t) \|_{L^2(h\Z^d)}=\| u_0 \|_{L^2(h\Z^d)}
\end{equation}
for all times $t\in \R$, which implies global well-posedness in $L^2(h\Z^d)$, and so in all discrete Sobolev spaces $H^s(h\Z^d)$ for $s\in\R$ by norm equivalence. By standard arguments, we can also show that the energy 
\begin{equation} \label{energy_conservation_eq}
E(u):=\frac{1}{2} \| u(t) \|_{\dot{H}^1(h\Z^d)}^2 + \frac{\lambda}{p+1} \| u(t) \|_{L^{p+1}(h\Z^d)}^{p+1}
\end{equation}
is conserved for all times. In view of our set of parameters for $(d,p)$ and by discrete Gagliardo-Nirenberg inequality (see equation \eqref{discrete_gagliardo_nirenberg} in Appendix \ref{appendix_section}), this implies an a priori estimates on $u$ in $H^1(h\Z^d)$, both uniform in time and with respect to the stepsize $h$. We now state one of the main results of this paper, which gives some polynomial bounds on the growth of discrete Sobolev norms of $u$:

\begin{theorem} \label{growth_discrete_sobolev_norms_theorem}
Let $\eps>0$ and $m \in \N^*$. Let $u \in \mathcal{C}(\R;H^m(h\Z^d))$ be the unique solution of \eqref{DNLS} with initial condition $u_0 \in H^m(h\Z^d)$ and the set of parameters $(\lambda,d,p)$ satisfying \eqref{set_parameters}, then 
\begin{equation} \label{bound_growth_discrete_sobolev_norms}
\| u(t) \|_{H^m(h\Z^d)} \leq C \left(1+t^{2(m-1)+\eps} \right),  
\end{equation}
with $C=C(\eps,m,\| u_0 \|_{H^m(h\Z^d)})$.
\end{theorem}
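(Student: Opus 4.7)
The plan is to establish \eqref{bound_growth_discrete_sobolev_norms} by induction on $m \in \N^*$, adapting the \emph{modified energy} strategy of \cite{planchon2017} to the discrete lattice $h\Z^d$. For the base case $m=1$, conservation of the energy \eqref{energy_conservation_eq} directly controls $\|u(t)\|_{\dot H^1(h\Z^d)}^2$ uniformly in~$t$: in the defocusing case both contributions to $E(u)$ have the same sign, while in the focusing cubic case ($d=1$, $p=3$), the discrete Gagliardo-Nirenberg inequality recalled in Appendix \ref{appendix_section} together with mass conservation \eqref{mass_conservation_eq} and the mass-subcritical nature of the nonlinearity allow one to absorb the negative $L^4$ term into the kinetic part, yielding once again a uniform $H^1$ bound with no time growth.

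Assume now \eqref{bound_growth_discrete_sobolev_norms} at order $m-1 \geq 1$. Differentiating $\|u\|_{\dot H^m(h\Z^d)}^2 = \langle (-\Delta_h)^m u, u \rangle_h$ along \eqref{DNLS} and using the skew-adjointness of $i\Delta_h$ with respect to $\langle\cdot,\cdot\rangle_h$ removes the linear contribution, leaving
\[
\frac{d}{dt}\|u(t)\|_{\dot H^m(h\Z^d)}^2 \;=\; -2\lambda\,\Im\,\langle (-\Delta_h)^m u,\,|u|^{p-1} u \rangle_h.
\]
A direct bound on this expression costs a full factor of $\|u\|_{H^m(h\Z^d)}$ and would yield exponential-in-time growth. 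The idea is therefore to introduce a correction $R_m(u)$ of homogeneity $p+1$ in $u$ and involving at most $m-1$ discrete derivatives, such that the modified energy $\tilde E_m(u) := \|u\|_{\dot H^m(h\Z^d)}^2 + R_m(u)$ satisfies an identity of the form $\frac{d}{dt}\tilde E_m(u) = \mathcal{M}_m(u)$, where $\mathcal{M}_m(u)$ is a multilinear expression whose $2m$ discrete derivatives are spread over $p+1 \geq 4$ factors of $u$, with no single factor carrying more than $m-1$ derivatives. Such a correction $R_m$ is identified algebraically by expanding the right-hand side above via discrete summation by parts and reading off the top-order piece as a total time derivative plus an acceptable remainder.

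The residual form $\mathcal{M}_m(u)$ can then be controlled via the discrete Hölder, Gagliardo-Nirenberg and Sobolev embeddings of Appendix \ref{appendix_section}, in terms of $\|u\|_{H^{m-1}(h\Z^d)}$ and the uniform $H^1$ bound already obtained in the base case. Plugging in the induction hypothesis yields a polynomial-in-$t$ differential inequality for $\tilde E_m$; integration over $[0,t]$, combined with the equivalence $\tilde E_m(u) \sim \|u\|_{H^m(h\Z^d)}^2$ modulo lower-order quantities (themselves handled through the induction), produces the desired bound \eqref{bound_growth_discrete_sobolev_norms} at order $m$, the exponent $2(m-1)+\eps$ reflecting the loss incurred at each step of this bootstrap.

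The main obstacle is intrinsically discrete. The finite-difference operators $\nabla_h^\pm$ do not obey the Leibniz rule exactly, but only up to a remainder of size $h\,\nabla_h^+ f\cdot \nabla_h^+ g$, so that the discrete commutators of $(-\Delta_h)^m$ with multiplication by $|u|^{p-1}$ generate additional error terms absent from the continuous analysis of \cite{planchon2017}. The algebraic construction of $R_m$ must therefore be carried out directly on the lattice, so as to preserve the crucial top-order cancellation modulo contributions that remain bounded uniformly in $h$. A secondary subtlety is that $v \mapsto |v|^{p-1} v$ is not smooth at the origin for odd integer $p$, which forces the discrete analogue of the chain rule to be expressed through finite-difference quotients rather than genuine derivatives.
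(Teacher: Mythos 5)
Your overall framing (modified energies à la \cite{planchon2017}, uniform $H^1$ bound from energy conservation, iteration to get polynomial growth) matches the paper in spirit, but there are two concrete gaps that would prevent your scheme from closing. The first and most serious is the claim that the residual multilinear form $\mathcal{M}_m(u)$, with $2m$ derivatives spread over $p+1$ factors each carrying at most $m-1$ of them, "can be controlled via the discrete H\"older, Gagliardo--Nirenberg and Sobolev embeddings in terms of $\|u\|_{H^{m-1}_h}$ and the uniform $H^1$ bound". It cannot, at least in $d=2$: a typical term has two factors carrying $m-1$ derivatives, and placing one of them in $L^\infty_h$ costs $H^{m+\eps}_h$ by Sobolev embedding, while placing both in $L^4_h$ costs $H^{m-1+d/4}_h$ each; after interpolating against the uniform $H^1$ bound the total exponent of $\|u\|_{H^m_h}$ comes out to exactly $2$, which upon integration yields exponential, not polynomial, growth. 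The paper escapes this borderline case only by invoking the discrete Strichartz estimates of Lemma \ref{discrete_strichartz_lemma}, packaged as the nonlinear Strichartz bound \eqref{nonlinear_strichartz_estimate}, which trades time integrability ($T^{2/3}$) for a gain of $1/4$ derivative on each $W^{1,4}_h$ factor and pushes the exponent down to $\frac{2m-\frac52}{m-1}+\eps<2$. Your proposal never mentions dispersive estimates, and without them the argument does not give a sublinear-in-energy increment.

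The second gap is the bookkeeping of your induction on $m$. If the increment of $\tilde E_m$ over $[0,t]$ were really bounded by a power of $\|u\|_{H^{m-1}_h}$, then inserting the inductive bound $\|u\|_{H^{m-1}_h}\lesssim 1+t^{2(m-2)+\eps}$ and integrating would give an exponent for $\|u\|_{H^m_h}$ that is roughly $\tfrac{p+1}{2}$ times the previous one, i.e.\ growing geometrically in $m$ rather than linearly; you would not recover $2(m-1)+\eps$. The mechanism that actually produces the linear exponent is a self-referential difference inequality $\|u(T)\|_{H^m_h}^2-\|u_0\|_{H^m_h}^2\lesssim \|u\|_{L^\infty_T H^m_h}^{\alpha}$ on unit time intervals with $2-\alpha\sim \frac{1}{2(m-1)}$, iterated via $U_{n+1}\le U_n+CU_n^{\alpha/2}$. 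Finally, a structural remark: the paper builds its modified energy on time derivatives, $\|\partial_t^k u\|_{L^2_h}^2$ plus corrections, comparing $\partial_t^k u$ to $\Delta_h^k u$ via \eqref{equivalence_sobolev_time_derivative_equation}. Since $\partial_t$ obeys the exact Leibniz rule and commutes with $\Delta_h$, this sidesteps precisely the discrete-Leibniz-rule obstruction you correctly identify as the main difficulty of your spatial-derivative construction of $R_m$; if you pursue your route you would have to carry out that lattice algebra explicitly, whereas the time-derivative formulation makes it unnecessary.
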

This deserves some comments:
\begin{itemize}
\item This result is a generalization of \cite[Theorem 1.1]{bernier_sobolev} for both higher nonlinearity powers and to the two-dimensional case, in the defocusing setting. However, the time estimate in \cite{bernier_sobolev} is substantially  better ($t^{\frac{m-1}{2}}$ instead of $t^{2(m-1)+\eps}$), and the factor~$C$ only depends there on the $H^1(h\Z^d)$ norm of the initial condition $u_0$, instead of the $\| u_0 \|_{H^m(h\Z^d)}$ needed here in equation \eqref{bound_growth_discrete_sobolev_norms}. These features are mainly due to the algebraic structure of the cubic nonlinearity along with the use of better Sobolev embeddings in the one-dimensional case.
\item As announced above, our proof relies on the use of suitable \textit{modified energies}, a strategy that has proved useful in a variety of contexts. In the framework of growth of Sobolev norms, few results are available both for generic nonlinearities (typically higher than cubic) and for dimensions higher than one. The only general result we are aware of is the recent work~\cite{planchon2017} on continuous compact manifolds, where only weak dispersive estimates are available~\cite{BGT}. This situation is of course reminiscent of our discrete setting, and it is natural to adapt their strategy. However, in the three-dimensional case, for the cubic nonlinearity, their proof relies on a particular Strichartz-type estimate which allows to only bound the $L^{6/5}$ norm of the nonlinearity, enabling the use of Hölder inequalities with low regularity requirement. This estimate, proven in \cite[Proposition 5.4]{tzvetkov2007}, relies on semi-classical time estimates which are at best unclear in our discrete setting, and whose proof is out of scope of the present paper. We left here open the question of higher dimensions for future works, especially in the three-dimensional case where discrete Strichartz estimates are still available (see \cite{hong2019strichartz} or Lemma \ref{discrete_strichartz_lemma} in Appendix \ref{appendix_section}).
\end{itemize}

We now focus on our strong convergence result in the continuum limit $h \rightarrow 0$. In order to compare sequences of $L^2(h\Z^d)$ with integrable functions defined on the whole space, one needs an interpolation method. As introduced in \cite{bernier_sobolev, bernier_traveling}, we rely on the \textit{Shannon interpolation} $\mathcal{S}_h : L^2(h\Z^d) \rightarrow L^2(\R^d)$ defined by
 \[    \mathcal{S}_h u = \mathcal{F}^{-1} \left( \mathbf{1}_{\T_h^d} \widehat{u}  \right),  \]
which allows to extend a sequence into a real function whose Fourier transform is supported in $\T^d_h$, and where $\mathcal{F}$ denotes the usual Fourier transform on $\R^d$ defined by
\[ \mathcal{F} f(\xi) = \int_{\R^d} f(x)e^{-ix\cdot \xi} \dd x,   \]
for all $\xi \in \R^d$ and $f \in L^2(\R^d)$. We also adopt the following convention for the convolution product
\[ f \ast g (x)= \frac{1}{(2\pi)^d} \int_{\R^d} f(y) g(x-y) \dd y \quad \text{so that} \quad \mathcal{F}(fg)= \mathcal{F}f \ast \mathcal{F} g.  \]
 Of course, we also need an operator of projection for continuous integrable functions on discrete functions on the lattice $h\Z^d$. We naturally choose the \textit{pointwise projection} $\Pi_h$ from $H^s(\R^d)$ to $L^2(h \Z^d)$, which is defined for any function $f \in H^s(\R^d)$ with $s>d/2$, by
  \[  \begin{array}{cccc}
  			\Pi_h : & H^s(\R^d) &\rightarrow & L^2(h \Z^d) \\
  					& f & \mapsto &  \begin{array}{cc}  h\Z^d \rightarrow & \C \\
  																	a \mapsto & f(a)    \end{array} \end{array}.   \]
In fact, any function $f \in H^s(\R^d)$ with $s>d/2$ admits a unique continuous representative $\tilde{f} \in \mathcal{C}(\R^d)$, which will still be denoted by $f$ for conciseness purposes. We now state:

\begin{theorem} \label{convergence_sobolev_DNLS_theorem}
Let $\delta>d/2$, $m=\left \lceil{\delta}\right \rceil$ and $\alpha> m + \frac{d}{2}$ with the set of parameters $(\lambda,d,p)$ satisfying \eqref{set_parameters}. Let $\psi \in \mathcal{C}(\R;H^{\alpha}(\R^d))$ be the unique solution of \eqref{NLS} with initial condition $\psi_0 \in H^{\alpha}(\R^d)$, and let $u$ be the unique solution of \eqref{DNLS} with initial condition $u_0=\Pi_h \psi_0$. Let $0\leq s < \delta - d/2 $ and $\eps>0$, then there exists constants $B=B(d,p,s,\delta,\lambda,\|\psi_0\|_{H^{\alpha}(\R^d)})>0$ and $C=C(d,p,s,\delta,\lambda,\eps)>0$ independent of $h$ such that for all $t \geq 0$,
\begin{equation} \label{convergence_sobolev_DNLS}
\| \mathcal{S}_h u (t) - \psi(t) \|_{H^s(\R^d)} \leq C h^{\frac{\delta-s}{2}-\frac{d}{4}}  \left(1+\|\psi_0\|_{H^{\alpha}(\R^d)} \right)^p e^{B t^{2(p-1)(m-1)+1+\eps}}.
\end{equation}
\end{theorem}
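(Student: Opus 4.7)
The plan is to transport the discrete solution to the continuum via Shannon interpolation, derive a perturbed NLS satisfied by $v := \mathcal{S}_h u$, and then compare $v$ to $\psi$ through a Duhamel-Gronwall argument in $H^s(\R^d)$. Since $\mathcal{S}_h$ is an isometric isomorphism onto the band-limited subspace $\{f\in L^2(\R^d):\supp\widehat{f}\subset \T_h^d\}$ and intertwines $-\Delta_h$ with the Fourier multiplier of symbol $\frac{4}{h^2}\sum_j \sin^2(h\xi_j/2)$, applying $\mathcal{S}_h$ to \eqref{DNLS} yields
\[ i\partial_t v + \Delta v = -r_h(D)v + \lambda \mathcal{S}_h(|u|^{p-1}u), \]
where $r_h(D)$ has symbol $|\xi|^2 - \frac{4}{h^2}\sum_j\sin^2(h\xi_j/2) = O(h^2|\xi|^4)$ on $\T_h^d$. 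Subtracting \eqref{NLS}, the error $w := v-\psi$ satisfies
\[ i\partial_t w+\Delta w = -r_h(D)v + \lambda\bigl[\mathcal{S}_h(|u|^{p-1}u)-|v|^{p-1}v\bigr] + \lambda\bigl[|v|^{p-1}v-|\psi|^{p-1}\psi\bigr]. \]

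Applying Duhamel and using that $e^{it\Delta}$ acts isometrically on $H^s(\R^d)$, I take $H^s$-norms and bound each forcing term separately. The initial error $w(0)=\mathcal{S}_h\Pi_h\psi_0-\psi_0$ is controlled via the approximation properties of the Shannon interpolation on $H^\alpha(\R^d)$, yielding $\|w(0)\|_{H^s}\leq C h^{(\delta-s)/2-d/4}\|\psi_0\|_{H^\alpha}$; this is exactly what motivates the rate in the theorem, and the assumption $\alpha>m+d/2$ is what makes the pointwise projection $\Pi_h$ legitimate with the right quantitative loss. The symbol remainder $r_h(D)v$ gains $h^2$ from its symbol, which after rebalancing derivatives using the Shannon estimates of Section~\ref{shannon_section} contributes at order $h^{(\delta-s)/2-d/4}$ times a polynomial in $\|u\|_{H^{m+1}(h\Z^d)}$. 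The commutator term $\mathcal{S}_h(|u|^{p-1}u)-|v|^{p-1}v$ is controlled by the bilinear (and, by iteration, multilinear) Shannon estimates of Section~\ref{shannon_section}, giving $Ch^{(\delta-s)/2-d/4}\|u\|_{H^m(h\Z^d)}^p$. Finally, the Lipschitz piece is controlled pointwise by $(|v|+|\psi|)^{p-1}|w|$, and in $H^s$ by a tame estimate
\[ \bigl\||v|^{p-1}v-|\psi|^{p-1}\psi\bigr\|_{H^s}\leq C\bigl(\|v\|_{L^\infty}+\|\psi\|_{L^\infty}\bigr)^{p-1}\|w\|_{H^s}, \]
using $s<\delta-d/2$ together with $H^\delta\hookrightarrow L^\infty$.

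I would then close the argument by Gronwall. The key inputs are Theorem~\ref{growth_discrete_sobolev_norms_theorem}, giving $\|u(\tau)\|_{H^m(h\Z^d)}\leq C(1+\tau^{2(m-1)+\eps})$, together with its continuous analogue (as in \cite{planchon2017}) providing the same bound on $\|\psi(\tau)\|_{H^m(\R^d)}$. These plug into the Lipschitz coefficient as $(1+\tau)^{(p-1)(2(m-1)+\eps)}$, so Gronwall produces a factor
\[ \exp\!\left(\int_0^t C(1+\tau)^{(p-1)(2(m-1)+\eps)}d\tau\right)\leq \exp\!\left(Bt^{2(p-1)(m-1)+1+\eps}\right), \]
matching the statement, while the forcing contributions integrate against the polynomial growth of $\|u\|_{H^m}^p$ to give the $(1+\|\psi_0\|_{H^\alpha})^p$ prefactor.

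The main obstacle is the multilinear commutator estimate $\|\mathcal{S}_h(|u|^{p-1}u)-|\mathcal{S}_h u|^{p-1}\mathcal{S}_h u\|_{H^s(\R^d)}$: extracting both the sharp rate $h^{(\delta-s)/2-d/4}$ and a controllable polynomial in $\|u\|_{H^m(h\Z^d)}$ requires a careful paraproduct-type analysis of how pointwise products of band-limited functions fail to remain band-limited, which is precisely what the bilinear estimates of Section~\ref{shannon_section} are designed to quantify. A secondary difficulty is that Theorem~\ref{growth_discrete_sobolev_norms_theorem} must enter the \emph{exponent} of the Gronwall bound rather than a prefactor, which dictates the double-exponent structure $t^{2(p-1)(m-1)+1+\eps}$ appearing in \eqref{convergence_sobolev_DNLS}.
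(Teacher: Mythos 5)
Your proposal is correct and follows essentially the same route as the paper: a Duhamel--Gronwall comparison in $H^s(\R^d)$ whose error sources are exactly the paper's $J_1,\dots,J_4$ (initial projection error and symbol mismatch via Proposition~\ref{linear_flow_prop}, aliasing via Proposition~\ref{aliasing_shannon_prop} and Corollary~\ref{corollary_shannon}, and the Lipschitz term fed by Theorem~\ref{growth_discrete_sobolev_norms_theorem} entering the Gronwall exponent); writing a perturbed equation for $v=\mathcal{S}_h u$ with the multiplier remainder $r_h(D)$ is only a cosmetic reorganization of comparing the two Duhamel formulas. One caveat: your claim that the symbol remainder costs a polynomial in $\|u\|_{H^{m+1}(h\Z^d)}$ would, taken literally, require $\alpha>m+1+d/2$, which is not assumed; the frequency splitting at $|\xi|\le \pi/\sqrt{h}$ in Proposition~\ref{linear_flow_prop} shows that trading only $\beta=\delta-s-d/2$ derivatives (hence staying within $H^{\delta}\subset H^{m}$) already yields the stated rate $h^{\frac{\delta-s}{2}-\frac{d}{4}}$.
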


One has to compare this result with \cite[Theorem 1.1]{hong2019strong}. In particular, we observe in our case a slight loss of regularity for the initial data, due to our pointwise projection compared to the finite type volume projection adopted in both works \cite{staffilani2013} and \cite{hong2019strong}. We also have an $\eps$-loss in the exponential bound for time, which is a direct consequence of our estimate for the evolution of discrete Sobolev norms of the solution \eqref{bound_growth_discrete_sobolev_norms}. However, our result covers the case of strong convergence for arbitrary Sobolev norm $H^s$, providing enough regularity on the initial data $\psi_0$. Secondly, one can improve the rate of convergence in the stepsize $h$ by assuming more regularity on $\psi_0$, a feature which was not covered in previous works concerning the continuum limit of the discrete nonlinear Schrödinger equation \eqref{DNLS}, and which is usually referred as being a \textit{compatible estimate} in the finite differences literature \cite{berikelashvili2006}.

We expect our strategy of proof to be quite general, and we plan to apply it to other dispersive discrete nonlinear equations in future works. In particular, some properties concerning the Shannon interpolation in the context of Sobolev spaces, although it might be considered being fairly standard, may not be explicitly written in the literature to the best of the author's knowledge, and a small part of this paper is devoted to the rigorous proof of these properties.

Throughout all these notes, $C$ will denote a generic positive constant independent of the underlying parameters, especially with respect to the stepsize parameter $h$. We will specifically denote by $C=C(\alpha)>0$ a constant depending on the parameter $\alpha$.

\section{Shannon interpolation} \label{shannon_section}
Only for this section, $d\geq 1$ is an arbitrary integer. We recall some classical properties of the pointwise projection of a regular enough continuous function on the grid $h\Z^d$, mostly for completeness purposes. We then introduce and prove some useful properties of the Shannon interpolation with respect to both continuous and discrete Sobolev spaces, especially the bilinear estimate of Proposition \ref{aliasing_shannon_prop} that will be fundamental in the proof of Theorem \ref{convergence_sobolev_DNLS_theorem} in Section \ref{convergence_section}.

\subsection{Pointwise projection}
 We first state a property concerning the discrete Fourier transform of the pointwise projection $\Pi_h$, sometimes referred as the \textit{Poisson summation formula}, that will be important in the following, and that we briefly prove in our particular setting for self-completeness:
 \begin{lemma} \label{fourier_projection}
 Let $f \in H^s(\R^d)$ with $s>d/2$, then, for all $\xi \in \T_h^d$,
 \[ \widehat{\Pi_h f} (\xi)= \sum_{k \in \Z^d} \mathcal{F} f \left(\xi + \frac{2 k \pi}{h} \right).   \]
 \end{lemma}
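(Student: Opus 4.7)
The plan is to verify the identity by showing that both sides, viewed as elements of $L^1(\T_h^d)$, have the same inverse discrete Fourier coefficients on $h\Z^d$. Before anything else, I would establish that the right-hand side actually defines an element of $L^1(\T_h^d)$. Since $s>d/2$, the Cauchy--Schwarz inequality with the Bessel weight gives
\[ \int_{\R^d} |\mathcal{F} f(\eta)|\,\dd \eta \leq \|f\|_{H^s(\R^d)} \left( \int_{\R^d} (1+|\eta|^2)^{-s}\,\dd \eta \right)^{1/2} < \infty, \]
so $\mathcal{F} f \in L^1(\R^d)$. By Fubini--Tonelli, the periodized sum $F(\xi):=\sum_{k\in\Z^d}\mathcal{F} f(\xi+2k\pi/h)$ then converges for almost every $\xi \in \T_h^d$ and satisfies $\|F\|_{L^1(\T_h^d)} \leq \|\mathcal{F} f\|_{L^1(\R^d)}$.

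Next, since $\mathcal{F} f \in L^1(\R^d)$, the Fourier inversion formula holds pointwise for the continuous representative of $f$; in particular for each $a\in h\Z^d$,
\[ f(a)= \frac{1}{(2\pi)^d}\int_{\R^d} \mathcal{F} f(\eta)\,e^{ia\cdot \eta}\,\dd \eta. \]
I would then compute the inverse DFT of $F$ at a point $a\in h\Z^d$:
\[ \frac{1}{(2\pi)^d}\int_{\T_h^d} F(\xi)\,e^{ia\cdot \xi}\,\dd \xi = \frac{1}{(2\pi)^d}\sum_{k\in\Z^d}\int_{\T_h^d} \mathcal{F} f\!\left(\xi+\tfrac{2k\pi}{h}\right)e^{ia\cdot \xi}\,\dd \xi, \]
the interchange being justified by the $L^1$ bound above together with Fubini. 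Using the crucial fact that $a \cdot \frac{2k\pi}{h} \in 2\pi \Z$ whenever $a \in h\Z^d$ and $k \in \Z^d$, so that $e^{ia\cdot 2k\pi/h}=1$, I perform in each integral the change of variable $\eta = \xi + 2k\pi/h$. Each piece becomes an integral of $\mathcal{F} f(\eta)e^{ia\cdot \eta}$ over the shifted fundamental domain $\T_h^d+2k\pi/h$, and the union of these tiles $\R^d$ up to a Lebesgue-null set. The sum therefore recombines into $(2\pi)^{-d}\int_{\R^d} \mathcal{F} f(\eta) e^{ia\cdot \eta}\,\dd \eta = f(a)$ by Fourier inversion.

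On the other hand, by definition of the discrete Fourier inversion formula recalled in the paper, the inverse DFT of $\widehat{\Pi_h f}$ at $a$ is $(\Pi_h f)(a)=f(a)$. Thus $\widehat{\Pi_h f}$ and $F$ have the same Fourier coefficients on $h\Z^d$, and since both lie in $L^1(\T_h^d)$ (for $\widehat{\Pi_h f}$ this follows from $f \in L^2(h\Z^d)$, which itself is a consequence of the pointwise control $|f(a)|\leq \|f\|_{H^s}$ and summability arguments, or simply from the fact that it is continuous and hence bounded on the compact $\T_h^d$), the uniqueness part of the inversion on $\T_h^d$ yields equality a.e. The main technical step is the Fubini/change-of-variable argument that unfolds the periodized integral back to $\R^d$; this is precisely where the regularity assumption $s>d/2$ enters, since it is what guarantees the absolute convergence needed to justify the interchange.
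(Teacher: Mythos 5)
Your argument is essentially the paper's own proof run in reverse: both rest on tiling $\R^d$ by the shifted fundamental domains $\T_h^d + 2k\pi/h$, the identity $e^{ia\cdot 2k\pi/h}=1$ for $a\in h\Z^d$, and uniqueness of Fourier coefficients on the torus, with your version merely unfolding the periodized sum back to $\R^d$ instead of folding the inversion integral into $\T_h^d$. It is correct, and your explicit justification that $\mathcal{F}f\in L^1(\R^d)$ for $s>d/2$ (hence the Fubini interchange and the a.e.\ convergence of the periodization) makes precise a step the paper leaves implicit.
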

 \begin{proof}
 First, let's note that for all $x \in \R^d$, we have 
\begin{align*}
 \Pi_h f(a) & = \frac{1}{(2\pi)^d} \int_{\R^d} e^{i a \cdot \xi} \mathcal{F} f(\xi) \dd \xi  = \frac{1}{(2\pi)^d} \sum_{k \in \Z^d} \int_{\T_h^d(k)} e^{i a \cdot \xi} \mathcal{F} f (\xi) \dd \xi \\
            & = \frac{1}{(2\pi)^d} \sum_{k \in \Z^d}  \int_{\T_h^d} e^{i a \cdot \left(\xi + \cancel{\frac{2 k \pi}{h}} \right)} \mathcal{F} f \left(\xi + \frac{2 k \pi}{h} \right) \dd \xi  
 \end{align*}
by linear change of variables and periodicity, where $a \in h\Z^d$ and for $k=(k_1,\ldots,k_d) \in \Z^d$,
\[  \T_h^d(k) = \enstq{ x=(x_1,\ldots,x_d) \in \R^d }{ \frac{(2k_j-1)\pi}{h} \leq x_j \leq \frac{(2k_j+1)\pi}{h} \ \text{for all} \ 1 \leq j \leq d } ,\]
with the convention $\T_h^d(0)=\T_h^d$. On the other hand, as $\Pi_h f \in L^2(h\Z^d)$, we know that for all $a \in h \Z^d$,
\[ \Pi_h f (a)=  \frac{1}{(2\pi)^d} \int_{\T_h^d} e^{ia \cdot \xi}  \widehat{\Pi_h f} (\xi) \dd \xi, \]
hence  we get the result by the inverse discrete Fourier formula.
 \end{proof}
 
 We now show the continuity of the pointwise projection with respect to Sobolev spaces:
 
 \begin{lemma} \label{continuity_pointwise_projection}
 Let $f \in H^{\delta}(\R^d)$ with~$\delta > d/2$, then for all $s \geq 0$ such that $\delta-s> \frac{d}{2}$, we have
\[ \| \Pi_h f \|_{H^s(h\Z^d)} \leq \| f \|_{H^s(\R^d)}+ Ch^{\delta-s}\| f \|_{H^{\delta}(\R^d)},  \]
where $C=C(d,s,\sigma)$.
 \end{lemma}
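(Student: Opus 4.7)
The plan is to pass to the Fourier side and exploit the explicit Poisson summation formula of Lemma~\ref{fourier_projection}. Write, for $\xi \in \T_h^d$,
\[ \widehat{\Pi_h f}(\xi) = \mathcal{F}f(\xi) + \sum_{k \in \Z^d \setminus \{0\}} \mathcal{F}f\!\left(\xi + \tfrac{2k\pi}{h}\right), \]
so that, by the triangle inequality in the weighted $L^2(\T_h^d)$ space defining $\|\cdot\|_{H^s(h\Z^d)}$, the discrete Sobolev norm splits into a ``main'' contribution and an ``aliasing'' contribution. The overall strategy is to control the main term directly by $\|f\|_{H^s(\R^d)}$, and to absorb the aliasing term into $Ch^{\delta-s}\|f\|_{H^\delta(\R^d)}$ by a Cauchy--Schwarz argument whose exponents are chosen exactly so that the condition $\delta - s > d/2$ is saturated.

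For the main term, I use that on $\T_h^d$ the discrete symbol satisfies $\frac{4}{h^2}\sin^2(h\xi_j/2) \le \xi_j^2$, so that
\[ 1+\frac{4}{h^2}\sum_{j=1}^d \sin^2\!\left(\frac{h\xi_j}{2}\right) \le 1+|\xi|^2. \]
This immediately bounds the $k=0$ contribution by $\|f\|_{H^s(\R^d)}$ after extending the integral from $\T_h^d$ to $\R^d$.

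The aliasing term is the heart of the proof. Setting $\xi_k := \xi + 2k\pi/h$, I apply Cauchy--Schwarz in the form
\[ \left|\sum_{k\neq 0} \mathcal{F}f(\xi_k)\right|^2 \le \left(\sum_{k\neq 0} (1+|\xi_k|^2)^{-(\delta-s)}\right)\left(\sum_{k\neq 0} (1+|\xi_k|^2)^{\delta-s}|\mathcal{F}f(\xi_k)|^2\right). \]
The exponent $\delta-s$ is tailored so that the first factor converges (thanks to $2(\delta-s) > d$) and is $O(h^{2(\delta-s)})$, since for $k\neq 0$ one has the uniform lower bound $|\xi_k|\gtrsim |k|/h$. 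To clear the discrete symbol in front, I use its $(2\pi/h)$-periodicity to rewrite it at $\xi_k$, and then bound $1+\frac{4}{h^2}\sum \sin^2(h\xi_j/2) \le C(1+|\xi_k|^2)$ on $\T_h^d + 2k\pi/h$ (using $|\xi_k|\ge \pi/h$ and a uniform $L^\infty$ bound on the symbol). This converts $\mathrm{symbol}(\xi)^s(1+|\xi_k|^2)^{\delta-s}$ into $(1+|\xi_k|^2)^\delta$. Re-assembling the integral over $\T_h^d$ with the sum over $k\neq 0$ then tiles $\R^d\setminus \T_h^d$ and yields the desired bound $Ch^{2(\delta-s)}\|f\|_{H^\delta(\R^d)}^2$.

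The main subtlety, and thus the step I expect to be trickiest, is the double balancing in the Cauchy--Schwarz: the weight $(1+|\xi_k|^2)^{-(\delta-s)}$ must be strong enough to be summable in $k$ but weak enough that, after multiplication by the discrete symbol (which contributes like $(1+|\xi_k|^2)^{s}$), it still combines with $|\mathcal{F}f(\xi_k)|^2$ only up to the weight $(1+|\xi_k|^2)^{\delta}$ — that is, without paying an extra $s$ of regularity beyond $H^\delta$. This is precisely where the assumption $\delta - s > d/2$ enters in a sharp way, and it dictates why the chosen exponent is $\delta-s$ rather than $\delta$ in the Cauchy--Schwarz splitting.
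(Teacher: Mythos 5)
Your proof is correct and follows essentially the same route as the paper: Poisson summation (Lemma~\ref{fourier_projection}), separating the $k=0$ term via the symbol bound, and controlling the aliasing terms through the weight $(1+|\xi+2k\pi/h|^2)^{\delta-s}$ together with the lower bound $|\xi+2k\pi/h|\gtrsim |k|/h$ and the summability condition $\delta-s>d/2$. The only (immaterial) difference is organizational: you apply Cauchy--Schwarz pointwise in $\xi$ over the sum in $k$ before integrating, whereas the paper first splits the norm by the triangle inequality and handles each shifted cell $\T_h^d(k)$ separately.
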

 \begin{proof}
 Defining $g_k(\xi):=\mathcal{F} f \left(\xi + \frac{2 k \pi}{h} \right) $ for all $\xi \in \T_h^d$ and $k \in \Z^d$, from Lemma \ref{fourier_projection} we infer
 \[ \| \Pi_h f \|_{H^s(h\Z^d)} =  \sum_{k \in \Z^d} \left( \frac{1}{(2 \pi) ^d} \int_{\T_h^d} \left( 1+\frac{4}{h^2} \sum_{j=1}^d \sin  \left( \frac{h \xi_j }{2}  \right)^2 \right)^s \left| g_k(\xi) \right|^2 \dd \xi \right)^{\frac12}.  \]
The term for $k=0$ is easily bounded by $\| f \|_{H^s(\R^d)}$, as for all $\xi \in \T_h^d$, 
\begin{equation} \label{bound_sine}
\frac{4}{h^2} \sum_{j=1}^d \sin  \left( \frac{h \xi_j }{2}  \right)^2 \leq 1+|\xi|^2.
\end{equation} 
On the other hand, for $k\neq 0$, we need to estimate the following integral 
 \[ I_k(t):=\int_{\T^d_h} \left( 1+\frac{4}{h^2} \sum_{j=1}^d \sin  \left( \frac{h \xi_j }{2}  \right)^2 \right)^s  \frac{\left(1+ \left| \xi + \frac{2 k \pi}{h} \right|^2  \right)^{\delta-s}}{\left(1+ \left| \xi + \frac{2 k \pi}{h} \right|^2  \right)^{\delta -s}} \left| g_k(\xi)  \right|^2 \dd \xi  \]
 for $\delta \geq s$ yet to be fixed, in particular we need to get a lower bound of $\left| \xi + 2 k \pi/h \right|^2$ for $\xi \in \T_h^d$. As $k \neq 0$, there exists $j_0$ such that
\[ k_{j_0} = \max_{1 \leq j \leq d} |k_j| >0, \quad \text{hence} \quad  2 |k_{j_0}| -1 = |k_{j_0}| + \underbrace{|k_{j_0}| -1}_{\geq 0} \geq |k_{j_0}|. \]
We can then write that
\[ \min_{\xi \in \T^d_h} \left| \xi + \frac{2 k \pi}{h} \right|^2 \geq  \min_{\xi_{j_0} \in \left[-\pi/h,\pi/h \right]} \left| \xi_{j_0} + \frac{2 k_{j_0} \pi}{h} \right|^2 = \frac{\pi^2}{h^2} \left( 2 |k_{j_0}|-1 \right)^2 \geq \frac{\pi^2 |k|^2}{h^2d}, \]
and making the change of variable $\xi \mapsto \xi-2 k \pi/h$ in the integral $I_k$, from the periodicity of the sine function alongside the bound \eqref{bound_sine} we infer the estimate
\[ I_k(t)\lesssim_{d,s,\delta} h^{2(\delta-s)} \left(\sum_{k\neq0} \frac{1}{|k|^{2{(\delta -s)}}}  \right) \| f \|_{H^{\delta}(\R^d)}^2,    \]
where the infinite sum in the right hand-side of this inequality is finite as soon as $\delta - s >d/2$, which gives the result.  
 \end{proof}

 \subsection{Shannon interpolation}
 We now focus on properties of the Shannon interpolation $\mathcal{S}_h$ defined in Section \ref{results_section}. Note that the Shannon interpolation can also be defined by the finite-element type formula
 \[  u \in L^2(h\Z^d) \mapsto \mathcal{S}_h u (x) = \sum_{a \in h\Z^d}  \sinc \left( \frac{x-a}{h} \right) u(a),  \]
where $\sinc$ denotes the cardinal sine function $\sinc(x)=\frac{\sin(x)}{x}$. Also note that $\Pi_h \circ \mathcal{S}_h g = g$, so $\mathcal{S}_h g$ is the only function in $L^2(\R^d)$ with Fourier transform support included in $\T_h^d$ and whose values on $h\Z^d$ are those of $g$. We first state a continuity property:
 
 \begin{lemma}  \label{boundedness_shannon}
Let $u \in H^s(h\Z^d)$ with $s \geq 0$, then
\[ \| u \|_{H^s(h\Z^d)} \leq \| \mathcal{S}_h u \|_{H^s(\R^d)} \leq \left( \frac{\pi}{2}  \right)^s \| u \|_{H^s(h\Z^d)}.  \]
\end{lemma}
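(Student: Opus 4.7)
The plan is to pass to the Fourier side and reduce the claim to a pointwise comparison of two multipliers on the fundamental cell $\T_h^d$. By the definition $\mathcal{S}_h u = \mathcal{F}^{-1}(\mathbf{1}_{\T_h^d} \widehat{u})$, the continuous Fourier transform of $\mathcal{S}_h u$ vanishes outside $\T_h^d$ and agrees with $\widehat{u}$ on $\T_h^d$. Using the Plancherel conventions of Section~\ref{results_section}, the two norms can therefore be written as
\[
\| \mathcal{S}_h u \|_{H^s(\R^d)}^2 = \frac{1}{(2\pi)^d}\int_{\T_h^d} (1+|\xi|^2)^s |\widehat{u}(\xi)|^2 \dd \xi,
\]
\[
\| u \|_{H^s(h\Z^d)}^2 = \frac{1}{(2\pi)^d}\int_{\T_h^d} \left(1+\frac{4}{h^2}\sum_{j=1}^d \sin^2\!\left(\frac{h\xi_j}{2}\right)\right)^s |\widehat{u}(\xi)|^2 \dd \xi.
\]
Hence it is enough to prove the pointwise sandwich
\[
1 + \frac{4}{h^2}\sum_{j=1}^d \sin^2\!\left(\frac{h\xi_j}{2}\right) \;\leq\; 1 + |\xi|^2 \;\leq\; \frac{\pi^2}{4}\left(1 + \frac{4}{h^2}\sum_{j=1}^d \sin^2\!\left(\frac{h\xi_j}{2}\right)\right)
\]
for every $\xi \in \T_h^d$, after which raising to the power $s\ge 0$ (the map $x\mapsto x^s$ being monotone on $[1,\infty)$) and integrating against $|\widehat{u}(\xi)|^2$ immediately yields both inequalities of the lemma.

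The left-hand bound follows from $\sin^2 \theta \leq \theta^2$ applied to $\theta = h\xi_j/2$, which gives $\frac{4}{h^2}\sin^2(h\xi_j/2) \leq \xi_j^2$; summing over $j$ and adding $1$ to both sides does the job. For the right-hand bound, since $\xi \in \T_h^d$ we have $|h\xi_j/2|\leq \pi/2$, so Jordan's inequality $|\sin\theta|\geq(2/\pi)|\theta|$ applies and gives $\frac{4}{h^2}\sin^2(h\xi_j/2) \geq \frac{4}{\pi^2}\xi_j^2$. Summing over $j$ then yields
\[
1 + |\xi|^2 \leq \frac{\pi^2}{4} + |\xi|^2 = \frac{\pi^2}{4}\left(1 + \frac{4|\xi|^2}{\pi^2}\right) \leq \frac{\pi^2}{4}\left(1 + \frac{4}{h^2}\sum_{j=1}^d \sin^2\!\left(\frac{h\xi_j}{2}\right)\right),
\]
where the first inequality uses $\pi^2/4 \geq 1$.

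There is no serious obstacle here: the only things to keep track of are the Fourier normalizations (the factor $1/(2\pi)^d$ cancels on both sides because $\widehat{u}$ is integrated over the same region $\T_h^d$) and the fact that $(\pi/2)^{2s}$ — rather than a sharper constant — arises precisely from the use of Jordan's inequality on the fundamental cell. The argument works for every real $s\geq 0$ thanks to the Fourier-side definition of the discrete Sobolev norm.
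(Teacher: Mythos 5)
Your proof is correct and follows essentially the same route as the paper: reduce to the Fourier-side comparison of the multipliers $1+|\xi|^2$ and $1+\tfrac{4}{h^2}\sum_j\sin^2(h\xi_j/2)$ on $\T_h^d$, then use $\sin^2\theta\le\theta^2$ together with Jordan's inequality $|\sin\theta|\ge\tfrac{2}{\pi}|\theta|$ and the fact that $\pi^2/4\ge 1$. You merely spell out the pointwise sandwich that the paper leaves implicit.
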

\begin{proof}
We recall that
\[ \| \mathcal{S}_h u \|_{H^s(\R^d)}^2= \frac{1}{(2 \pi)^d} \int_{\R^d} \left( 1 + |\xi|^2 \right)^s | \mathcal{F} \circ  \mathcal{F}^{-1} \left( \mathbf{1}_{\T_h^d} \widehat{u} \right)(\xi) |^2 \dd \xi=\frac{1}{(2 \pi)^d} \int_{\T_h^d} \left( 1 + |\xi|^2 \right)^s | \widehat{u}(\xi) |^2 \dd \xi,\]
hence this property is a direct consequence of the following sharp inequality,
\[  \forall \omega \in \left[-\frac{\pi}{2}, \frac{\pi}{2}  \right],\ \ \ \sin(\omega) \leq \omega \leq \frac{\pi}{2} \sin(\omega), \]
and the fact that $\frac{\pi^2}{4} \simeq 2,47 >1$.
\end{proof}

Let now state and prove the following bilinear estimate, as well as a direct corollary, which will be useful in Section \ref{convergence_section}.

\begin{proposition} \label{aliasing_shannon_prop}
Let $f$, $g \in H^{\delta}(h\Z^d)$ with $\delta >d/2$, and let $0\leq s \leq \delta$. Then
\[ \| \mathcal{S}_h (fg) - (\mathcal{S}_h f) (\mathcal{S}_h g) \|_{H^s(\R^d)} \leq C h^{\delta-s} \|\mathcal{S}_h f \|_{H^{\delta}(\R^d)} \|\mathcal{S}_h g \|_{H^{\delta}(\R^d)}. \]
\end{proposition}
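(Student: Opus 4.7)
The plan is to work in Fourier space on $\R^d$. Write $F := \mathcal{S}_h f$ and $G := \mathcal{S}_h g$, so that $\mathcal{F}F$ and $\mathcal{F}G$ are supported in $\T_h^d$ and $\mathcal{F}(FG) = \mathcal{F}F \ast \mathcal{F}G$ is supported in $2\T_h^d = [-2\pi/h,\, 2\pi/h]^d$. Since $\delta > d/2$, the space $H^\delta(\R^d)$ is a Banach algebra, so $FG \in H^\delta(\R^d)$ with $\|FG\|_{H^\delta(\R^d)} \leq C \|F\|_{H^\delta(\R^d)} \|G\|_{H^\delta(\R^d)}$. Moreover, by the defining property of the Shannon interpolation $\Pi_h(FG) = fg$, so $\mathcal{S}_h(fg) = \mathcal{S}_h \circ \Pi_h (FG)$. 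Applying Lemma \ref{fourier_projection} to $FG$ then yields the Poisson-type identity
\[ \mathcal{F}(\mathcal{S}_h(fg))(\xi) = \mathbf{1}_{\T_h^d}(\xi) \sum_{k \in \Z^d} \mathcal{F}(FG)\!\left(\xi + \frac{2k\pi}{h}\right). \]

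Subtracting $\mathcal{F}(FG)$ from both sides and splitting according to whether $\xi \in \T_h^d$, the Fourier transform of the error becomes
\[ \mathcal{F}\bigl(\mathcal{S}_h(fg) - FG\bigr)(\xi) = \mathbf{1}_{\T_h^d}(\xi) \sum_{k \neq 0} \mathcal{F}(FG)\!\left(\xi + \frac{2k\pi}{h}\right) \;-\; \mathbf{1}_{\R^d \setminus \T_h^d}(\xi)\, \mathcal{F}(FG)(\xi). \]
The $H^s(\R^d)$ norm squared of the error then splits into an \textit{outside} contribution, supported on $\R^d \setminus \T_h^d$, and an \textit{aliasing} contribution, supported on $\T_h^d$, which I would estimate separately.

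For the outside piece, $\xi \notin \T_h^d$ forces $|\xi| \geq \pi/h$, so since $s \leq \delta$ we have $(1+|\xi|^2)^s \leq C h^{2(\delta - s)}(1+|\xi|^2)^\delta$, giving immediately a bound $C h^{2(\delta - s)} \|FG\|_{H^\delta(\R^d)}^2$. For the aliasing piece we use the uniform bound $(1+|\xi|^2)^s \leq C h^{-2s}$ on $\T_h^d$. Applying Cauchy--Schwarz in $k$ with weight $(1+|\xi + 2k\pi/h|^2)^{-\delta}$ and the elementary lower bound $|\xi + 2k\pi/h| \geq \pi |k|/(h\sqrt{d})$ valid for $\xi \in \T_h^d$ and $k \neq 0$ (exactly as in the proof of Lemma \ref{continuity_pointwise_projection}) gives
\[ \sum_{k \neq 0} \bigl(1 + |\xi + 2k\pi/h|^2 \bigr)^{-\delta} \leq C h^{2\delta} \sum_{k \neq 0} |k|^{-2\delta} \leq C h^{2\delta}, \]
which is finite precisely because $\delta > d/2$. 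After the change of variable $\xi \mapsto \xi + 2k\pi/h$ in each summand, this combines with the $h^{-2s}$ bound to yield $C h^{2(\delta - s)} \|FG\|_{H^\delta(\R^d)}^2$ for the aliasing piece as well.

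The delicate step is the aliasing estimate: one has to trade the loss $(1+|\xi|^2)^s \lesssim h^{-2s}$ on $\T_h^d$ against the gain $h^{2\delta}$ extracted from the sum of shifted weights, so that the correct power $h^{2(\delta-s)}$ emerges. The assumption $\delta > d/2$ intervenes twice, both through the algebra property (to bound $\|FG\|_{H^\delta}$ by $\|F\|_{H^\delta}\|G\|_{H^\delta}$) and through the summability of $\sum_{k \neq 0} |k|^{-2\delta}$. Combining the two contributions with the algebra bound then delivers exactly the stated inequality.
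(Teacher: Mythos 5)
Your proof is correct, but it follows a genuinely different route from the paper's. The paper works directly with the two convolutions: it writes $\mathcal{F}(\mathcal{S}_h(fg))=\widehat{f}\ast_h\widehat{g}\,\mathbf{1}_{\T_h^d}$ and $\mathcal{F}(\mathcal{S}_hf\,\mathcal{S}_hg)=\widehat{f}\mathbf{1}_{\T_h^d}\ast\widehat{g}\mathbf{1}_{\T_h^d}$, decomposes $\T_h^d$ into the $2^d$ sub-cubes $K_l$ of side $\pi/h$, identifies for each pair $(l,l')$ the shift $\kappa(l,l')\in\{-1,1\}^d$ responsible for the aliasing, and closes the estimate with Young's convolution inequality together with $\|\mathcal{F}\circ\mathcal{S}_hf\|_{L^1}\leq\|\mathcal{S}_hf\|_{H^{\delta}}$. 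You instead observe that $\Pi_h(FG)=fg$ with $F=\mathcal{S}_hf$, $G=\mathcal{S}_hg$, so the whole error is $\mathcal{S}_h\Pi_h(FG)-FG$ for the single function $FG\in H^{\delta}(\R^d)$; Lemma \ref{fourier_projection} then hands you the aliasing sum $\sum_{k\neq0}\mathcal{F}(FG)(\xi+2k\pi/h)$, and the algebra property of $H^{\delta}(\R^d)$ absorbs all of the bilinear structure into $\|FG\|_{H^{\delta}}\lesssim\|F\|_{H^{\delta}}\|G\|_{H^{\delta}}$. This is slicker and avoids the combinatorics of the $K_l$ entirely. Two points deserve emphasis, both of which you handle correctly. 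First, you cannot simply cite Lemma \ref{continuity_shannon}, since it is stated under the stronger hypothesis $\delta-s>d/2$; your weighted Cauchy--Schwarz in $k$ (rather than the triangle inequality used in Lemmas \ref{continuity_pointwise_projection} and \ref{continuity_shannon}) is exactly what relaxes this to $\delta>d/2$ with $0\leq s\leq\delta$, because the summability condition becomes $\sum_{k\neq0}|k|^{-2\delta}<\infty$ while the full factor $h^{2(\delta-s)}$ is recovered by trading the crude bound $(1+|\xi|^2)^s\lesssim h^{-2s}$ on $\T_h^d$ against the gain $h^{2\delta}$ from the shifted weights. Second, the identity $\Pi_h(FG)=fg$ is legitimate because $F$ and $G$ are band-limited, hence smooth, so pointwise evaluation on $h\Z^d$ is multiplicative. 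The only cosmetic caveats are that $(1+|\xi|^2)^s\lesssim h^{-2s}$ on $\T_h^d$ presumes $h$ bounded above (as it is throughout the paper), and that the interchange of the sum over $k$ with the integral should be noted (monotone convergence suffices, as all terms are nonnegative after Cauchy--Schwarz). Your observation that both occurrences of $\delta>d/2$ (algebra property and summability) are genuinely needed matches where the paper uses the same hypothesis.
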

\begin{proof}
Let's first note that
\begin{align*}
\widehat{f}\mathbf{1}_{\T_h^d} \ast \widehat{g}\mathbf{1}_{\T_h^d} & = \int_{\T_h^d} \widehat{f} (\eta) \widehat{g}(\xi-\eta)\mathbf{1}_{\T_h^d}(\xi-\eta) \dd \eta \\
& = \left( \int_{\xi + \T_h^d} \widehat{f}(\xi-\eta) \widehat{g}(\eta) \mathbf{1}_{\T_h^d}(\eta) \right) \mathbf{1}_{\T_{h/2}^d},
\end{align*}
where $\T_{h/2}^d = \R^d / \left(\frac{4\pi}{h} \Z^d \right)= \left[-\frac{2\pi}{h},\frac{2\pi}{h} \right]^d$. We now  write that
\begin{align*}
\| \mathcal{S}_h (fg) - (\mathcal{S}_h f) (\mathcal{S}_h g) \|_{H^s(\R^d)}^2 & =  \int_{\R^d} (1+|\xi|^2)^s \left| \mathcal{F} \circ \mathcal{S}_h(fg)(\xi) - \mathcal{F} \left( \mathcal{S}_h f  \mathcal{S}_h g  \right)(\xi) \right|^2 \dd \xi \\
& = \int_{\R^d} (1+|\xi|^2)^s \left| \widehat{f g}(\xi) \mathbf{1}_{\T_h^d}(\xi) - \left( \widehat{f}\mathbf{1}_{\T_h^d} \ast \widehat{g}\mathbf{1}_{\T_h^d}  \right)(\xi)  \right|^2 \dd \xi \\
& = \mathcal{I}_1 + \mathcal{I}_2,
\end{align*}
where
\[ \mathcal{I}_1 := \int_{\T^d_h} (1+|\xi|^2)^s \left| \widehat{f g} (\xi) - \left( \widehat{f}\mathbf{1}_{\T_h^d} \ast \widehat{g}\mathbf{1}_{\T_h^d}  \right)(\xi)  \right|^2 \dd \xi  \]
and
\[ \mathcal{I}_2 := \int_{\T_{h/2}^d \backslash \T_h^d} (1+|\xi|^2)^s \left| \mathcal{F} \left( \mathcal{S}_h f  \mathcal{S}_h g  \right)(\xi)  \right|^2 \dd \xi . \]
We first estimate $\mathcal{I}_1$. Let's note that
\[ \widehat{f g} (\xi)= \widehat{f} \ast_h \widehat{g}(\xi)=\frac{1}{(2\pi)^d}\int_{\T_h^d}  \widehat{f}(\eta) \widehat{g}(\xi-\eta) \dd \eta,  \]
and  
\[ \left( \widehat{f}\mathbf{1}_{\T_h^d} \ast \widehat{g}\mathbf{1}_{\T_h^d}  \right)(\xi) = \frac{1}{(2\pi)^d} \int_{\left\{\eta \in \T_h^d  \right\} \cap \left\{\xi - \eta \in \T_{h/2}^d \backslash \T_h^d  \right\} } \widehat{f}(\eta) \widehat{g}(\xi-\eta) \dd \eta,   \]
so as $\xi \in \T_h^d$,
\[ \widehat{f g} (\xi) - \left( \widehat{f}\mathbf{1}_{\T_h^d} \ast \widehat{g}\mathbf{1}_{\T_h^d}  \right)(\xi) = \frac{1}{(2\pi)^d}\int_{\T_h^d} \widehat{f}(\eta) \widehat{g}(\xi-\eta) \mathbf{1}_{ \T_{h/2}^d \backslash \T_h^d}(\xi-\eta) \dd \eta.  \]
We now decompose $\T_h^d$ as a disjoint union of intervals ($d=1$), squares ($d=2$) or cubes ($d=3$) of length $\pi/h$ (or so on for $d\geq 4$), namely
\[ \T_h^d = \bigsqcup_{l \in \left\{0,1 \right\}^d} K_l \quad \text{with} \quad K_l=\enstq{x \in \T_h^d}{ (l_j-1)\frac{\pi}{h} < x_j \leq l_j\frac{\pi}{h}, 1\leq j \leq d }. \]
We fix $l\in \left\{0,1 \right\}^d$. Let $\xi \in K_l$ and $\eta \in \T_h^d$, and note that 
\[ \eta \in K_l \Leftrightarrow \xi - \eta \in \T_h^d \Leftrightarrow \mathbf{1}_{\xi - \eta \in \T_{h/2}^d \backslash \T_h^d}(\eta)=0. \]
From this remark, we define the set
\[ \mathcal{N}_l := \enstq{l' \in \left\{0,1 \right\}^d}{\xi-\eta \in \T_{h/2}^d\backslash \T_h^d, \xi \in K_l, \eta \in K_{l'}}= \left\{0,1 \right\}^d \backslash l.  \]
We now also fix $l' \in \mathcal{N}_l$, and we assume $\eta \in K_{l'}$. Then there exists a unique $\kappa(l,l') \in \left\{ -1,1 \right\}^d$ such that
\[ \xi - \eta -\kappa(l,l')\frac{2\pi}{h} \in \T_h^d.    \]
Using these notations, we can rewrite $\mathcal{I}_1$ and get the bound
\begin{equation} \label{new_writing_I_1}
 \mathcal{I}_1 \leq C(d) \sum_{l\in \left\{0,1 \right\}^d} \sum_{l' \in \mathcal{N}_l} \underbrace{\int_{K_l} (1+|\xi|^2)^s \left| \int_{K_{l'}} \widehat{f}(\eta) \widehat{g}(\xi-\eta) \mathbf{1}_{ \T_{h/2}^d \backslash \T_h^d}(\xi - \eta) \dd \eta \right|^2 \dd \xi}_{=: \mathcal{I}_{l,l'}} .  
\end{equation}
By the linear change of variables $\xi'=\xi +2 \kappa(l,l')\pi/h$, and as the function $\widehat{g}$ is $\frac{2\pi}{h}$-periodic, we get (dropping the $'$ on $\xi'$) that
\[ \mathcal{I}_{l,l'} = \int_{K_l+\frac{2 \kappa(l,l')\pi}{h}} \left(1+\left| \xi - \frac{2 \kappa(l,l')\pi}{h}\right|^2\right)^s \left| \int_{K_{l'}} \widehat{f}(\eta) \widehat{g}(\xi-\eta) \mathbf{1}_{\T_h^d}(\xi - \eta) \dd \eta  \right|^2 \dd \xi.  \]
We now write that
\[ \left(1+\left| \xi - \frac{2 \kappa(l,l')\pi}{h}\right|^2\right)^s= \frac{\left(1+\left| \xi - \frac{2 \kappa(l,l')\pi}{h}\right|^2\right)^s}{(1+|\xi|^2)^s} \frac{(1+|\xi|^2)^s}{(1+|\xi|^2)^{\delta}} (1+|\xi|^2)^{\delta}.  \]
As $|\kappa(l,l')|\geq 1$, and $K_l \subset \T_h^d$, for every $\xi \in K_l+2 \kappa(l,l')\pi/h$, we have $|\xi|\geq \pi/h$ so
\[  \frac{1}{(1+|\xi|^2)^{\delta-s}} \leq C(s,d) h^{2(\delta-s)}.  \]
On the other hand, as $\xi - 2 \kappa(l,l')\pi/h \in \T_h^d$, we get that
\[ \left| \xi - \frac{2 \kappa(l,l')\pi}{h}\right| \leq |\xi| \quad \text{so} \quad \frac{\left(1+\left| \xi - \frac{2 \kappa(l,l')\pi}{h}\right|^2\right)^s}{(1+|\xi|^2)^s} \leq 1. \]
Combining these inequalities with rough upper bounds on the integration variables $\xi$ and $\eta$, we get the estimate
\[  \mathcal{I}_{l,l'} \leq C h^{2(\delta-s)} \int_{\R^d} (1+|\xi|^2)^{\delta} \left( \int_{\R^d} |\widehat{f}(\eta)| \mathbf{1}_{\T_h^d}(\eta) |\widehat{g}(\xi-\eta)| \mathbf{1}_{\T_h^d}(\xi - \eta) \dd \eta  \right)^2 \dd \xi.  \]
We now conclude by classical arguments. From the classical estimate
\[ (1+|\xi|^2)^{\delta} \leq C \left( (1+|\xi-\eta|^2)^{\delta}+ (1+|\eta|^2)^{\delta} \right)  \]
for $\delta>0$ and $C=C(\delta)>0$, we infer that
\[ \int_{\R^d} (1+|\xi|^2)^{\frac{\delta}{2}} |\widehat{f}(\eta)| \mathbf{1}_{\T_h^d}(\eta) \widehat{g}(\xi-\eta) \mathbf{1}_{\T_h^d}(\xi - \eta)  \dd \eta 	\lesssim | (1+|\cdot|^2)^{\frac{\delta}{2}} \widehat{g} \mathbf{1}_{\T_h^d}| \ast | \widehat{f}\mathbf{1}_{\T_h^d} |+ | \widehat{g}\mathbf{1}_{\T_h^d} | \ast | (1+|\cdot|^2)^{\frac{\delta}{2}} \widehat{f}\mathbf{1}_{\T_h^d}|. \]
Then, integrating over $ \xi \in \R^d$ and using Young's convolution inequality, we get that
\[ \mathcal{I}_{l,l'} \leq C h^{2(\delta-s)} \left( \|\widehat{g} \mathbf{1}_{\T_h^d}\|_{H^{\delta}(\R^d)}^2 \|\widehat{f} \mathbf{1}_{\T_h^d}\|_{L^1(\R^d)}^2 + \|\widehat{g} \mathbf{1}_{\T_h^d}\|_{L^1(\R^d)}^2 \|\widehat{f} \mathbf{1}_{\T_h^d}\|_{H^{\delta}(\R^d)}^2 \right)    \]
We recall that $\widehat{f} \mathbf{1}_{\T_h^d}=\mathcal{F}\circ \mathcal{S}_h f$ and we observe that
\[ \| \mathcal{F}\circ \mathcal{S}_h f \|_{L^1(\R^d)} \leq \| \mathcal{S}_h f \|_{H^{\delta}(\R^d)}   \]
from classical Fourier transform properties as $\delta>d/2$. As we have finite sums in equation \eqref{new_writing_I_1}, we finally get that
\[ \mathcal{I}_1 \leq C h^{2(\delta-s)} \|\mathcal{S}_h f \|_{H^{\delta}(\R^d)}^2 \|\mathcal{S}_h g \|_{H^{\delta}(\R^d)}^2.\]
 On the other hand, we have
\[ \mathcal{I}_2 = \int_{\R^d\backslash\T_h^d} (1+|\xi|^2)^s \left| \mathcal{F} \left( \mathcal{S}_h f  \mathcal{S}_h g  \right)(\xi) \right|^2 \dd \xi \leq C h^{2(\delta -s)} \|\mathcal{S}_h f \|_{H^{\delta}(\R^d)}^2 \|\mathcal{S}_h g \|_{H^{\delta}(\R^d)}^2.	 \]
as $H^{\delta}(\R^d)$ is an algebra providing that $\delta > d/2$, which ends the proof.
\end{proof}

\begin{corollary} \label{subordinate_shannon_prop}
Let $g \in H^{\delta}(h\Z^d)$ with $\delta >d/2$ and $n\in\N^*$. Then
\begin{equation*}
 \| \mathcal{S}_h( g^n) \|_{H^{\delta}(\R^d)} \leq C \| \mathcal{S}_h g \|_{H^{\delta}(\R^d)}^n.  
 \end{equation*}
 where $C=C(\delta,d,n)$. In particular, for $n_1+n_2=n \in \N^*$, we have
 \begin{equation} \label{subordinate_shannon}
 \| \mathcal{S}_h( g^{n_1} \overline{g}^{n_2}) \|_{H^{\delta}(\R^d)} \leq C \| \mathcal{S}_h g \|_{H^{\delta}(\R^d)}^n.  
 \end{equation}
\end{corollary}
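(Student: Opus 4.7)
The plan is to derive the corollary directly from Proposition \ref{aliasing_shannon_prop} specialized at $s=\delta$, combined with the algebra property of $H^\delta(\R^d)$ that is already used at the end of the preceding proof (valid since $\delta>d/2$). Setting $s=\delta$ in the bilinear estimate and applying the triangle inequality together with the algebra bound $\| (\mathcal{S}_h f)(\mathcal{S}_h g) \|_{H^\delta(\R^d)} \lesssim \| \mathcal{S}_h f\|_{H^\delta(\R^d)} \| \mathcal{S}_h g\|_{H^\delta(\R^d)}$ yields the key building block
\[
\| \mathcal{S}_h(fg) \|_{H^\delta(\R^d)} \leq C\, \| \mathcal{S}_h f \|_{H^\delta(\R^d)}\, \| \mathcal{S}_h g \|_{H^\delta(\R^d)}
\]
for any $f,g \in H^\delta(h\Z^d)$.

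Next, I would prove the first estimate by induction on $n$. The case $n=1$ is trivial, and the inductive step applies the block estimate above to $f=g$ and to $\tilde{g}=g^{n-1}$; one just needs $g^{n-1} \in H^\delta(h\Z^d)$, which follows from the induction hypothesis together with Lemma \ref{boundedness_shannon} (the two norms are equivalent).

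For the second estimate involving complex conjugates, I would first observe that $\mathcal{S}_h$ commutes with complex conjugation: since $\T_h^d$ is symmetric about the origin, a direct Fourier calculation gives $\mathcal{S}_h(\overline{g}) = \overline{\mathcal{S}_h g}$, and in particular $\|\mathcal{S}_h \overline{g}\|_{H^\delta(\R^d)} = \|\mathcal{S}_h g\|_{H^\delta(\R^d)}$. Then I would induct again on $n=n_1+n_2$, splitting off one factor (either $g$ or $\overline{g}$) at each step and applying the block estimate above to reduce to the case of one less factor.

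There is no real obstacle here, as everything is an almost immediate consequence of the (difficult) Proposition \ref{aliasing_shannon_prop}. The only subtlety worth mentioning explicitly is the conjugation symmetry of the Shannon projector, which is what allows the second bound to follow from the first without any additional work.
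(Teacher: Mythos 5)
Your proposal is correct and follows essentially the same route as the paper: induction on $n$ using Proposition \ref{aliasing_shannon_prop} at $s=\delta$ together with the triangle inequality and the algebra property of $H^{\delta}(\R^d)$, and the identity $\mathcal{S}_h\overline{g}=\overline{\mathcal{S}_h g}$ to handle the conjugated factors. Isolating the bilinear building block $\|\mathcal{S}_h(fg)\|_{H^{\delta}}\lesssim\|\mathcal{S}_h f\|_{H^{\delta}}\|\mathcal{S}_h g\|_{H^{\delta}}$ first is only a cosmetic repackaging of the paper's chain of inequalities.
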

\begin{proof}
We prove the result, which is obvious for $n=1$, by induction. Assuming that \eqref{subordinate_shannon} holds for $n \in \N^*$, we compute using Proposition \ref{aliasing_shannon_prop} and the fact that $H^{\delta}(\R^d)$ is an algebra that
\begin{align*}
\| \mathcal{S}_h( g^{n+1}) \|_{H^{\delta}(\R^d)} & \leq \| \mathcal{S}_h(g^{n+1}) - \mathcal{S}_h( g^n)\mathcal{S}_h g \|_{H^{\delta}(\R^d)} + \| \mathcal{S}_h( g^n)\mathcal{S}_h g \|_{H^{\delta}(\R^d)} \\
& \leq C \|\mathcal{S}_h( g^n)\|_{H^{\delta}(\R^d)} \|\mathcal{S}_h g\|_{H^{\delta}(\R^d)} \\
& \leq C \| \mathcal{S}_h g \|_{H^{\delta}(\R^d)}^{n+1}
\end{align*}
by assumption, which gives the result. Equation \eqref{subordinate_shannon} then naturally follows from the fact that for all $x\in \R^d$,
\[ \mathcal{S}_h \overline{g}(x)=\sum_{a\in h\Z^d}\overline{g}(a)\sinc(\pi(x-a)) = \overline{\mathcal{S}_h g}(x). \]
\end{proof}

\subsection{Interpolation of the linear flow}

The next estimate deals with the error made by consequently projecting then interpolating a continuous function $f$ in terms of Sobolev spaces.

\begin{lemma} \label{continuity_shannon}
Let $f \in H^{\delta}(\R^d)$ and $s \geq 0$ such that $\delta -s > d/2$. Then
\[ \| \mathcal{S}_h \circ \Pi_h f - f \|_{H^s(\R^d)} \leq C h^{\delta-s} \| f \|_{H^{\delta}(\R^d)},  \]
where $C=C(d,s,\delta)>0$.
\end{lemma}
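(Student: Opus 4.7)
The plan is to compute everything on the Fourier side and split the error into an aliasing piece (on $\T_h^d$) and a truncation piece (on $\R^d \setminus \T_h^d$).

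First, observe that by definition of the Shannon interpolation and Lemma~\ref{fourier_projection},
\[
\mathcal{F}(\mathcal{S}_h \circ \Pi_h f)(\xi) = \mathbf{1}_{\T_h^d}(\xi) \sum_{k\in\Z^d} \mathcal{F} f\!\left(\xi + \tfrac{2k\pi}{h}\right).
\]
Subtracting $\mathcal{F} f$ and using Plancherel, the squared $H^s(\R^d)$-norm of $\mathcal{S}_h\circ\Pi_h f - f$ decomposes as
\[
\mathcal{J}_1 + \mathcal{J}_2 := \frac{1}{(2\pi)^d}\int_{\T_h^d}(1+|\xi|^2)^s\Big|\sum_{k\neq 0}\mathcal{F} f(\xi + \tfrac{2k\pi}{h})\Big|^2\dd\xi + \frac{1}{(2\pi)^d}\int_{\R^d\setminus\T_h^d}(1+|\xi|^2)^s|\mathcal{F} f(\xi)|^2\dd\xi.
\]

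The truncation term $\mathcal{J}_2$ is handled directly: on $\R^d\setminus\T_h^d$ one has $|\xi|\geq \pi/h$, so $(1+|\xi|^2)^{-(\delta-s)}\leq C h^{2(\delta-s)}$, and the remaining factor $(1+|\xi|^2)^{\delta}|\mathcal{F} f|^2$ integrates to $\|f\|_{H^\delta(\R^d)}^2$ up to constants.

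The main step is the aliasing term $\mathcal{J}_1$, which I would treat by Cauchy--Schwarz in $k$ with weights $(1+|k|^2)^{\pm(\delta-s)/2}$:
\[
\Big|\sum_{k\neq 0} g_k(\xi)\Big|^2 \leq \Big(\sum_{k\neq 0}(1+|k|^2)^{-(\delta-s)}\Big)\Big(\sum_{k\neq 0}(1+|k|^2)^{\delta-s}|g_k(\xi)|^2\Big),
\]
where $g_k(\xi):=\mathcal{F} f(\xi + 2k\pi/h)$ and the first factor is finite precisely because $\delta-s>d/2$. The key pointwise comparison, which is the lower bound already used in Lemma~\ref{continuity_pointwise_projection}, is
\[
|\xi + \tfrac{2k\pi}{h}|^2 \geq \frac{\pi^2|k|^2}{h^2 d},\qquad \xi\in\T_h^d,\ k\neq 0,
\]
from which I get both $(1+|k|^2) \leq C h^2(1+|\xi+2k\pi/h|^2)$ and, since $|\xi|\leq \pi\sqrt{d}/h$ while $|\xi+2k\pi/h|\geq \pi/(h\sqrt{d})$, the comparison $(1+|\xi|^2)\leq C(1+|\xi+2k\pi/h|^2)$. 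Plugging these in turns the weights $(1+|\xi|^2)^s (1+|k|^2)^{\delta-s}$ into $C h^{2(\delta-s)}(1+|\xi+2k\pi/h|^2)^\delta$.

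Finally, after the change of variables $\xi \mapsto \xi + 2k\pi/h$ in each summand, the integrals over $\T_h^d + 2k\pi/h$ for $k\neq 0$ reassemble into a single integral over $\R^d\setminus \T_h^d$ of $(1+|\xi|^2)^\delta |\mathcal{F} f(\xi)|^2$, which is at most $\|f\|_{H^\delta(\R^d)}^2$ up to constants. Hence $\mathcal{J}_1 \leq C h^{2(\delta-s)}\|f\|_{H^\delta(\R^d)}^2$, and combining with the bound on $\mathcal{J}_2$ gives the result. The only delicate point is juggling the three weight factors $(1+|\xi|^2)^s$, $(1+|k|^2)^{\delta-s}$ and $(1+|\xi+2k\pi/h|^2)^\delta$ so as to end up with the clean $h^{\delta-s}$ rate; once the two comparison inequalities above are in hand, everything else is bookkeeping.
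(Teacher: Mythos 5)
Your proof is correct and follows essentially the same route as the paper: the same split into an aliasing integral over $\T_h^d$ (controlled via the Poisson summation formula of Lemma~\ref{fourier_projection} and the lower bound $|\xi+2k\pi/h|^2\geq \pi^2|k|^2/(h^2d)$) and a truncation integral over $\R^d\setminus\T_h^d$. Your weighted Cauchy--Schwarz in $k$ is just a slightly more explicit way of carrying out the summation over the nonzero frequencies that the paper delegates to ``mimicking the proof of Lemma~\ref{continuity_pointwise_projection}'', and it correctly isolates where the hypothesis $\delta-s>d/2$ is used.
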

\begin{proof}
We write 
\begin{equation*} 
\| \mathcal{S}_h \circ \Pi_h f - f \|_{H^s(\R^d)}^2 
= \int_{\T^d_h} \left(1 + |\xi|^2 \right)^s \left| \widehat{ \Pi_h f} (\xi) - \mathcal{F}f (\xi)  \right|^2 \dd \xi + \int_{\R^d \backslash \T^d_h} \left(1 + |\xi|^2 \right)^s \left| \mathcal{F}f (\xi)  \right|^2 \dd \xi,
\end{equation*}
as the sum of two functions of disjoint supports. For the first integral, we get from Lemma \ref{fourier_projection} that for all $\xi \in \T^d_h$,
\[ \widehat{ \Pi_h f} (\xi) - \mathcal{F}f (\xi) = \sum_{k \neq 0} \mathcal{F} f \left(\xi + \frac{2 k \pi}{h} \right) =: \sum_{k \neq 0} g_k(\xi). \]
Mimicking the proof of Lemma \ref{continuity_pointwise_projection}, the first integral is handle by the estimate 
\[ \int_{\T^d_h} \left(1 + |\xi|^2 \right)^s \left| \widehat{ \Pi_h f} (\xi) - \mathcal{F}f (\xi)  \right|^2 \dd \xi  \lesssim_{d,s,\delta} h^{2(\delta-s)} \left(\sum_{k\neq0} \frac{1}{|k|^{2{(\delta-s)}}}  \right) \| f \|_{H^{\delta}(\R^d)}^2, \]
which is finite as soon as $\delta-s>\frac{d}{2}$. For the second integral, as $1+ |\xi|^2 \geq d \pi^2/h^2$ for $\xi \in \R^d \backslash \T_h^d$, we easily get that
\[ \int_{\R^d \backslash \T^d_h} \left(1 + |\xi|^2 \right)^s \left| \mathcal{F}f (\xi)  \right|^2 \dd \xi \leq C h^{2(\delta-s)} \| f \|_{H^{\delta}(\R^d)}^2.  \]
\end{proof}

As a first consequence, we can get an error estimate between the interpolation of the discrete linear flow and the continuous one:

\begin{proposition} \label{linear_flow_prop}
Let $\psi_0 \in H^{\delta}(\R^d)$ with $\delta>d/2$, and let's denote $u_0 := \Pi_h \psi_0$. Let $s \geq 0$ such that~$\delta-s > d/2$, then for all $t \geq 0$
we have
\[ \| \mathcal{S}_h e^{it\Delta_h}u_0 - e^{it\Delta} \psi_0 \|_{H^s(\R^d)} \leq C h^{\frac{\delta-s}{2}-\frac{d}{4}} (1+t) \| \psi_0 \|_{H^{\delta}(\R^d)}, \]
where $C=C(d,s,\delta)>0$.
\end{proposition}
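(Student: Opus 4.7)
The plan is to write
\[\mathcal{S}_h e^{it\Delta_h} u_0 - e^{it\Delta} \psi_0 = \bigl(\mathcal{S}_h e^{it\Delta_h} u_0 - e^{it\Delta} \mathcal{S}_h u_0\bigr) + e^{it\Delta}\bigl(\mathcal{S}_h u_0 - \psi_0\bigr)\]
and to treat each piece separately. The second (``projection'') piece is immediate: since $e^{it\Delta}$ is a unitary operator on $H^s(\R^d)$ and $u_0 = \Pi_h\psi_0$, its $H^s$-norm equals $\|\mathcal{S}_h\Pi_h\psi_0 - \psi_0\|_{H^s}$, which Lemma \ref{continuity_shannon} bounds by $C h^{\delta-s}\|\psi_0\|_{H^\delta}$. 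Because $\delta - s \geq (\delta-s)/2 - d/4$, this contribution is absorbed into the target bound with no $t$-dependence.

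For the first (``phase'') piece I would pass to Fourier variables. Both terms have Fourier support in $\T_h^d$, with
\[\mathcal{F}\bigl(\mathcal{S}_h e^{it\Delta_h} u_0 - e^{it\Delta}\mathcal{S}_h u_0\bigr)(\xi) = \mathbf{1}_{\T_h^d}(\xi)\bigl(e^{-it\omega_h(\xi)} - e^{-it|\xi|^2}\bigr)\widehat{u_0}(\xi),\]
where $\omega_h(\xi) = \tfrac{4}{h^2}\sum_{j=1}^d \sin^2(h\xi_j/2)$ is the symbol of $-\Delta_h$. The two main ingredients are: (a) the Taylor estimate $|\sin^2 y - y^2| \leq Cy^4$ for $|y| \leq \pi/2$, which gives the consistency bound $|\omega_h(\xi) - |\xi|^2| \leq Ch^2(1+|\xi|^2)^2$ on $\T_h^d$; and (b) the elementary interpolation $|e^{ia}-e^{ib}| \leq 2^{1-\alpha}|a-b|^{\alpha}$ for $\alpha \in [0, 1]$. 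Together they yield the pointwise estimate $|e^{-it\omega_h(\xi)} - e^{-it|\xi|^2}| \leq Ct^{\alpha}h^{2\alpha}(1+|\xi|^2)^{2\alpha}$ for every $\alpha \in [0,1]$.

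Inserting this into Plancherel's formula and invoking Lemma \ref{continuity_pointwise_projection} to bound $\|\Pi_h \psi_0\|_{H^{s+4\alpha}(h\Z^d)}$ by $C\|\psi_0\|_{H^\delta}$ (valid whenever $\delta - (s+4\alpha) > d/2$), I would arrive at
\[\|\mathcal{S}_h e^{it\Delta_h}u_0 - e^{it\Delta}\mathcal{S}_h u_0\|_{H^s} \leq C t^\alpha h^{2\alpha}\|\psi_0\|_{H^\delta}.\]
Choosing $\alpha = \min\bigl(1,\,(\delta - s - d/2)/4\bigr)$ produces exactly the $h$-exponent $(\delta-s)/2 - d/4$ of the proposition (or the better $h^2$ when the minimum is attained at $1$); and since $\alpha \leq 1$, the time factor satisfies $t^\alpha \leq 1+t$, closing the estimate.

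The main obstacle is the borderline case $\alpha = (\delta - s - d/2)/4$, at which the hypothesis of Lemma \ref{continuity_pointwise_projection} becomes the equality $\delta - (s+4\alpha) = d/2$ and the auxiliary Poisson series $\sum_{k \neq 0}|k|^{-2(\delta - s - 4\alpha)}$ diverges marginally. I would handle this either by taking $\alpha$ strictly below $(\delta-s-d/2)/4$ and absorbing the resulting $\eps$-loss into the constant $C = C(d, s, \delta)$, or by estimating the Poisson summation directly via the disjoint-support decomposition of $\R^d$ into the shifted cubes $\T_h^d(k)$, which extends Lemma \ref{continuity_pointwise_projection} up to $\sigma = \delta$ without the strict inequality.
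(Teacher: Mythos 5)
Your proposal is correct and follows essentially the same route as the paper: the same splitting into a phase error plus a projection error, Lemma \ref{continuity_shannon} for the latter, and for the former the consistency bound $|\omega_h(\xi)-|\xi|^2|\lesssim h^2|\xi|^4$ interpolated against the trivial bound $2$ --- your pointwise H\"older inequality $|e^{ia}-e^{ib}|\le 2^{1-\alpha}|a-b|^{\alpha}$ is just a repackaging of the paper's frequency splitting at $|\xi|=\pi/\sqrt h$, and both arguments land on the same borderline application of Lemma \ref{continuity_pointwise_projection} at regularity $s+4\alpha=\delta-d/2$ (the paper takes $\beta=\delta-s-d/2$ there without comment). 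One caveat on your fallback plan: taking $\alpha$ strictly below $(\delta-s-d/2)/4$ yields $h^{2\alpha}$ with a strictly smaller exponent, and a loss in the power of $h$ cannot be absorbed into a constant $C(d,s,\delta)$, so only your second fix (e.g.\ Cauchy--Schwarz in $k$ over the shifted cubes, which requires only $\delta>d/2$) actually recovers the stated rate. Finally, the parenthetical that $\alpha=1$ gives ``the better $h^2$'' is backwards: when $(\delta-s)/2-d/4>2$ the advertised exponent exceeds $2$, so $h^2$ is the weaker bound --- though in that regime the $O(h^2)$ consistency error of $\Delta_h$ saturates and the paper's own computation is subject to the same cap.
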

\begin{proof}
We decompose our analysis on the two following integrals
\begin{align*}
\| \mathcal{S}_h e^{it\Delta_h}u_0 - e^{it\Delta} \psi_0 \|_{H^s(\R^d)} & \leq \| \mathcal{S}_h e^{it\Delta_h}u_0 - e^{it\Delta} \mathcal{S}_h u_0 \|_{H^s(\R^d)} + \| e^{it\Delta} (\mathcal{S}_h u_0 -  \psi_0 ) \|_{H^s(\R^d)} \\
& =: I_1(t) + I_2(t).
\end{align*} 
We first note that
\[  I_1(t)^2= \int_{\T_h^d} (1+|\xi|^2)^s \left| e^{-it \frac{4}{h^2} \sum_{j=1}^d  \sin \left(  \frac{\xi_j h}{2} \right)^2 } \widehat{u_0}(\xi) - e^{-it|\xi|^2} \widehat{u_0}(\xi) \right|^2 \dd \xi.  \]    
Using the fact that for all $\xi \in \T^d_h$, we have 
\[ \left| e^{-it \left( \frac{4}{h^2} \sum_{j=1}^d  \sin \left(  \frac{\xi_j h}{2} \right)^2 - \xi^2 \right) } - 1 \right| \leq t h^2 |\xi|^4, \]
(see for instance \cite[Section 3.2]{ignat2012} or \cite[Lemma 5.7]{hong2019strong}), we infer that, for $\beta>0$ yet to be fixed, 
\begin{align*}
I_1(t)^2 & \leq t h^2 \int_{|\xi|\leq \frac{\pi}{\sqrt{h}}} (1+|\xi|^2)^s |\xi|^4 \left| \widehat{u_0}(\xi) \right|^2 \dd \xi + \int_{\T^d_h \cap \left\{|\xi|> \frac{\pi}{\sqrt{h}}\right\} } \frac{(1+|\xi|^2)^{s+\beta}}{(1+|\xi|^2)^{\beta} }\left| \widehat{u_0}(\xi) \right|^2 \dd \xi \\
& \leq t \frac{h^{2}}{h^{2-\beta}} \int_{|\xi|\leq \frac{\pi}{\sqrt{h}}} (1+|\xi|^2)^{s+\beta} \left| \widehat{u_0}(\xi) \right|^2 \dd \xi + h^{\beta} \int_{\T^d_h \cap \left\{|\xi|> \frac{\pi}{\sqrt{h}}\right\} } (1+|\xi|^2)^{s+\beta}\left| \widehat{u_0}(\xi) \right|^2 \dd \xi
\end{align*}
so we get that
\[  I_1(t) \leq C (1+t) h^{\frac{\beta}{2}} \| u_0\|_{H^{s+\beta}(h\Z^d)} \leq C (1+t) h^{\frac{\beta}{2}} \left(\| \psi_0\|_{H^{s+\beta}(\R^d)}+ h^{\delta-s-\beta} \| \psi_0\|_{H^{\delta}(\R^d)}    \right)  \]
by Lemma \ref{continuity_pointwise_projection}, providing that $\delta -s-\beta>d/2$. On the other hand, from Lemma \ref{continuity_shannon} we get that
\[ I_2(t) =\| \mathcal{S}_h u_0 -  \psi_0 \|_{H^s(\R^d)} \leq C h^{\delta-s} \| \psi_0 \|_{H^{\delta}(\R^d)},  \]
as we have assume that $\delta-s>d/2$. Taking $\beta=\delta-s-d/2$ and combining both estimates, we get the result.
\end{proof}

\section{Bounds on the growth of discrete Sobolev norms}  \label{bounds_sobolev_section}

This section is devoted to the proof of Theorem \ref{growth_discrete_sobolev_norms_theorem}, which strongly relies on the use of \textit{modified energies}. For clearness purposes, we briefly give the general ideas behind this method and its main ingredients. In the context of Schrödinger equations, the idea is to generalize Kato's trick (which basically allows to read the $H^2$ regularity of the solution from an estimation of $\| \partial_t u \|_{L^2}$ using the expression of the equation) to higher orders : rather than deriving $2k$ times in space equation \eqref{DNLS} in order to get an $H^{2k}$ bound, we derive $k$ times in time to infer a bound on $\| \partial_t^k u \|_{L^2_h}$, which is equivalent to $\|u\|_{H^{2k}_h}$ up to a rest term due to nonlinear effects. The idea is then to identify (using the expression of equation \eqref{DNLS}) a quantity $\mathcal{E}_{2k}$ called \textit{higher-order energy} whose leading term is essentially $\| u \|_{H^{2k}_h}^2$. The technical part is then to finely bound $\mathcal{E}_{2k}$ by a combination of discrete Sobolev inequalities and discrete dispersive estimates, in an attempt to get an estimate of the form
\[ \| u(t) \|_{H^{2k}_h}^2  - \| u_0 \|_{H^{2k}_h}^2 \lesssim \| u(t) \|_{L^{\infty}_T H^{2k}_h}^{\alpha}  \]
with $\alpha <2$, which will implies the polynomial growth of $\| u(t) \|_{H^{2k}_h}$ by an iterative argument.

Note that our proof closely follows the one given in \cite{planchon2017} in the context of growth of Sobolev norms for the nonlinear Schrödinger equation \eqref{NLS} on compact manifolds. The main difficulties here come from both weak dispersion estimates in the discrete setting (see \eqref{homogeneous_discrete_strichartz} and \eqref{inhomogeneous_discrete_strichartz} in Appendix \ref{appendix_section}) and the fact that discrete integration by parts, which stands as an essential part of the proofs involving modified energies, are unsymmetrical, namely
\[  \sum_{a \in h \Z^d} \nabla_h^+ f(a) g(a) = - \sum_{a \in h \Z^d}  f(a) \nabla_h^-g(a).\]
We begin our analysis by estimating even discrete Sobolev norms, which is the natural way to proceed in view of the above paragraph. Comments on the generalization to the case of odd Sobolev norms will also follow, as its essentially relies on the same arguments as for the even case, but applied to a different modified energy $\mathcal{E}_{2k+1}$.

Note that in this section we will extensively use the fact that every solution $u$ of \eqref{DNLS} satisfies the estimate
\begin{equation} \label{uniform_bound_H1}
\| u \|_{\mathcal{C}(\R;H^1(h\Z^d))} \leq C, 
\end{equation}
with $C=C(p,\| u _0\|_{H^1(h\Z^d)})$, which holds uniformly with respect to $h$ in view of the energy conservation \eqref{energy_conservation_eq}, the discrete Sobolev embeddings \eqref{discrete_gagliardo_nirenberg} and from our set of parameters $(\lambda,d,p)$ defined by \eqref{set_parameters}. We will also systematically denote discrete spaces $H^s(h\Z^d)$ by the more compact notation $H^s_h$ in equation mode for conciseness purposes, as there is no ambiguity with continuous spaces in this section. The same way, the uniform norm on a time interval $\left[0,T\right[$ will be denoted $\|\cdot\|_{L^{\infty}_T}$.

\subsection{Even Sobolev norms}
We first suppose that $m=2k$ with $k\in \N$. In the spirit of \cite{planchon2017}, we define
\begin{multline} \label{modified_energy}
\mathcal{E}_{2k}(u(t))=\| \partial_t^k u(t)\|^2_{L^2_h}- \sum_{a \in h\Z^d} \left| \partial_t^{k-1} (|u(t,a)|^{p-1}u(t,a)) \right|^2\\
-\frac12 \sum_{a \in h\Z^d} \sum_{j=1}^d \left| \partial_t^{k-1} \nabla_{h,j}^+ ( |u(t,a)|^2) \right|^2  \sum_{\ell=1}^{\frac{p-1}{2}} |u(t,a)|^{p-1-2\ell} |u(t,a+he_j)|^{2\ell-2} .
\end{multline}
We then differentiate $\mathcal{E}_{2k}$ with respect to time, which gives the following proposition:

\begin{proposition} 
Let $u$ be a solution to \eqref{DNLS} with $u_0 \in H^{2k}(h\Z^d)$ and parameters $(\lambda,d,p)$ satisfying~\eqref{set_parameters}, then
\begin{multline} \label{time_derivative_modified_energy}
\frac{\dd}{\dd t}\mathcal{E}_{2k}(u(t)) =  - \frac12 \sum_{a \in h\Z^d} \sum_{j=1}^d \left| \partial_t^{k-1} \nabla^+_{h,j} ( |u(t,a)|^2) \right|^2  \sum_{\ell=1}^{\frac{p-1}{2}} \partial_t \left(|u(t,a)|^{p-1-2\ell} |u(t,a+he_j)|^{2\ell-2} \right) \\
 +\sum_{n=0}^{k-1} c_n \sum_{a \in h\Z^d} \sum_{j=1}^d \partial_t^n \nabla_{h,j}^+(|u(t,a)|^2) \partial_t^{k-1} \nabla_{h,j}^+(|u(t,a)|^2)  \sum_{\ell=1}^{\frac{p-1}{2}} \partial_t^{k-n} \left( |u(t,a)|^{p-1-2 \ell} |u(t,a+h e_j)|^{2 \ell-2} \right) \\
 +  \langle \partial_t^k(|u|^{p-1}),\partial_t^{k-1}(|\nabla_h^+ u|^2) \rangle_h +  \langle \partial_t^k(|u|^{p-1}),\partial_t^{k-1}(|\nabla_h^- u|^2) \rangle_h \\
 + \Re \sum_{n=0}^{k-1} c_n \langle \partial_t^n(|u|^{p-1}) \partial_t^{k-n}u, \partial_t^{k-1}(|u|^{p-1}u) \rangle_h \\
 + \Re \sum_{n=0}^{k-2} c_n \langle \partial_t^k(|u|^{p-1}) \partial_t^{k-n-1}u, \partial_t^n\Delta_h u \rangle_h + \Im \sum_{n=1}^{k-1} c_n \langle \partial_t^n(|u|^{p-1}) \partial_t^{k-n}u, \partial_t^k u \rangle_h \\
=:   \mathcal{J}_1(t)+ \mathcal{J}_2(t)+ \mathcal{J}_3(t) + \mathcal{J}_4(t) + \mathcal{J}_5(t) + \mathcal{J}_6(t) + \mathcal{J}_7(t). 
\end{multline}
where $(c_n)_n$ denotes explicit complex numbers which may change from line to line. 
\end{proposition}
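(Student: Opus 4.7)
The plan is to differentiate each of the three pieces of $\mathcal{E}_{2k}$ in turn, substitute every time derivative of order $k{+}1$ using \eqref{DNLS} in the form $\partial_t u = i\Delta_h u - i\lambda |u|^{p-1}u$, expand multi-linear products by Leibniz, and reorganise via the discrete integration by parts
\[ \sum_{a\in h\Z^d}\nabla^+_{h,j} f(a)\,g(a) = -\sum_{a\in h\Z^d} f(a)\,\nabla^-_{h,j} g(a). \]
The seven terms $\mathcal{J}_i$ collect exactly what survives once the ``top-order'' contributions cancel.

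For the first piece, $\tfrac{d}{dt}\|\partial_t^k u\|^2_{L^2_h} = 2\Re\langle \partial_t^{k+1}u,\partial_t^k u\rangle_h$; substituting $\partial_t^{k+1}u = i\Delta_h\partial_t^k u - i\lambda\,\partial_t^k(|u|^{p-1}u)$ kills the Laplacian contribution by skew-adjointness of $i\Delta_h$, and a Leibniz expansion of $\partial_t^k(|u|^{p-1}u)$ produces $\mathcal{J}_7$ (the $n=0$ summand vanishes by reality of $|u|^{p-1}|\partial_t^k u|^2$) plus a residual $n=k$ piece of the form $\Im\langle \partial_t^k(|u|^{p-1})u,\partial_t^k u\rangle_h$. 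For the second piece, $-\tfrac{d}{dt}\sum_a |\partial_t^{k-1}(|u|^{p-1}u)|^2 = -2\Re\langle\partial_t^k(|u|^{p-1}u),\partial_t^{k-1}(|u|^{p-1}u)\rangle_h$, and Leibniz yields $\mathcal{J}_5$ together with another residual $n=k$ piece of the same top-order type.

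For the third piece, Leibniz in time splits the derivative of $|\partial_t^{k-1}\nabla^+_{h,j}(|u|^2)|^2\cdot S_j$ (with $S_j := \sum_\ell |u|^{p-1-2\ell}|u(\cdot+he_j)|^{2\ell-2}$) into a part differentiating $S_j$, which is exactly $\mathcal{J}_1$, and a part differentiating the squared real factor, which produces $-\sum_{a,j}\partial_t^k\nabla^+_{h,j}(|u|^2)\,\partial_t^{k-1}\nabla^+_{h,j}(|u|^2)\,S_j$. Writing $\partial_t^k\nabla^+_{h,j}(|u|^2) = \nabla^+_{h,j}\partial_t^k(|u|^2) = 2\nabla^+_{h,j}\Re(u\,\overline{\partial_t^k u})$ and expanding the other Leibniz factors, the intermediate orders $1\leq n\leq k-1$ yield $\mathcal{J}_2$, while the genuine top-order contributions are handled by one discrete summation by parts (which moves $\nabla^+_{h,j}$ back and produces the terms $|\nabla_h^{\pm} u|^2$) followed by the substitution $i\partial_t u = -\Delta_h u + \lambda|u|^{p-1}u$; this generates $\mathcal{J}_3+\mathcal{J}_4+\mathcal{J}_6$ and exactly cancels the two residual $n=k$ pieces from the first two steps.

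The main obstacle is the purely algebraic bookkeeping: the top-order terms, those pairing $\partial_t^k u$ against $\partial_t^k(|u|^{p-1})$ or $\Delta_h \partial_t^k u$ against a $|u|^{p-1}$-type factor, must cancel across all three pieces, because only then does $\mathcal{E}_{2k}$ control $\|\partial_t^k u\|^2_{L^2_h}$ up to lower-order errors. This cancellation is what dictates the coefficients in \eqref{modified_energy}, following the modified-energy strategy of \cite{planchon2017}; the novelty compared to the continuous compact-manifold setting is that the asymmetry between $\nabla^+_h$ and $\nabla^-_h$ forces us to track both forward and backward gradients throughout, which is why $\mathcal{J}_3$ and $\mathcal{J}_4$ appear as a matching pair rather than a single term.
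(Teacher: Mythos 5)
Your overall architecture is the same as the paper's: differentiate, replace $\partial_t^{k+1}u$ (and $i\partial_t^k u$) via the equation, expand by Leibniz, and let the three pieces of $\mathcal{E}_{2k}$ cancel each other's top-order contributions, with $\mathcal{J}_5$ and $\mathcal{J}_7$ coming from the intermediate Leibniz terms exactly as you describe, and the Laplacian residual $2\Re \langle \partial_t^k(|u|^{p-1})u, -\Delta_h\partial_t^{k-1}u\rangle_h$ being absorbed by the gradient piece of the energy through one discrete summation by parts and the identity $\Delta_h(|u|^2)=u\Delta_h\overline{u}+\overline{u}\Delta_h u+|\nabla_h^+u|^2+|\nabla_h^-u|^2$, which is indeed where $\mathcal{J}_3$, $\mathcal{J}_4$ and $\mathcal{J}_6$ come from. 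The paper merely reads the same computation in the other direction (starting from $\|\partial_t^k u\|_{L^2_h}^2$ and peeling off total time derivatives, which is what dictates the correction terms in \eqref{modified_energy}).

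There is, however, one concretely wrong step in your third paragraph. The identity $\partial_t^k(|u|^2)=2\Re\bigl(u\,\overline{\partial_t^k u}\bigr)$ is false: by Leibniz, $\partial_t^k(u\overline{u})=2\Re\bigl(u\,\overline{\partial_t^k u}\bigr)+\sum_{n=1}^{k-1}\binom{k}{n}\partial_t^n u\,\partial_t^{k-n}\overline{u}$, and if you expand $\partial_t^k\nabla_{h,j}^+(|u|^2)$ this way you generate terms of the form $\nabla_{h,j}^+\bigl(\partial_t^n u\,\partial_t^{k-n}\overline{u}\bigr)\,\partial_t^{k-1}\nabla_{h,j}^+(|u|^2)\,S_j$, which is \emph{not} the shape of $\mathcal{J}_2$ and does not appear in \eqref{time_derivative_modified_energy}. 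The quantity $\partial_t^n\nabla_{h,j}^+(|u|^2)$ must be kept intact; $\mathcal{J}_2$ arises instead from the commutator between $\partial_t^k$ and the weight $S_j$ when you convert $\partial_t^k\bigl(\nabla_{h,j}^+(|u|^2)\bigr)\,S_j$ into $\partial_t^k\bigl(\nabla_{h,j}^+(|u|^2)\,S_j\bigr)=\partial_t^k\nabla_{h,j}^+(|u|^{p-1})$ so as to perform the summation by parts: the Leibniz terms in which at least one time derivative falls on $S_j$ (equivalently $0\le n\le k-1$ derivatives remain on $\nabla_{h,j}^+(|u|^2)$) are exactly $\mathcal{J}_2$. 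With that correction your scheme closes; the rest of the bookkeeping, including the cancellation of the two $n=k$ residuals against the nonlinear and Laplacian parts of $i\partial_t^k u=\partial_t^{k-1}(-\Delta_h u+\lambda|u|^{p-1}u)$, is as in the paper.
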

\begin{proof}
We compute, using equation \eqref{DNLS} as well as discrete integration by parts,
\begin{align*}
\frac{\dd }{\dd t} \left( \|\partial_t^k u \|_{L^2_h}^2  \right) & = 2 \Re \langle \partial_t^{k+1} u, \partial_t^k u \rangle_h = 2 \Re \langle \partial_t^k (-\Delta_h u + |u|^{p-1}u), i \partial_t^k u  \rangle_h \\
& =2 \Im \sum_{a \in h\Z^d} \left| \partial_t^k \nabla_h^+ u(a) \right|^2 + 2 \Re \langle \partial_t^k (|u|^{p-1}u), i \partial_t^ku \rangle_h, 
\end{align*}
so the imaginary part simplifies, leading by Leibniz rule to
\begin{multline*}
\frac{\dd }{\dd t} \left( \|\partial_t^k u \|_{L^2_h}^2  \right) \\
=2 \Re \langle \partial_t^k(|u|^{p-1})u,i\partial_t^k u \rangle_h + 2 \Re \langle |u|^{p-1} \partial_t^ku,i\partial_t^k u \rangle_h + \Re \sum_{n=1}^{k-1} \binom{k}{n} \langle \partial_t^n (|u|^{p-1}) \partial_t^{k-n}u,i\partial_t^k u \rangle_h, 
\end{multline*}
so using again equation \eqref{DNLS}, and by simplification,
\begin{multline} \label{equation_1_proof_modified_energy}
\frac{\dd }{\dd t} \left( \|\partial_t^k u \|_{L^2_h}^2  \right) =2 \Re \langle \partial_t^k(|u|^{p-1})u, - \Delta_h \partial_t^{k-1} u \rangle_h +2 \Re \langle \partial_t^k(|u|^{p-1})u,\partial_t^{k-1}(|u|^{p-1}u) \rangle_h \\
+ \Re \sum_{n=1}^{k-1} c_n \langle \partial_t^n (|u|^{p-1}) \partial_t^{k-n}u,i\partial_t^k u \rangle_h \\
=2 \Re \langle \partial_t^k (|u|^{p-1}) u, -\Delta_h \partial_t^{k-1} u \rangle_h + \frac{\dd}{\dd t} \sum_{a \in h \Z^d} \left|  \partial_t^{k-1}(|u(t,a)|^{p-1}u(t,a)) \right|^2 \\
+ \Re \sum_{n=0}^{k-1} c_n \langle \partial_t^n (|u|^{p-1}) \partial_t^{k-n} u,\partial_t^{k-1}(|u|^{p-1}u) \rangle_h + \Re \sum_{n=1}^{k-1} c_n \langle \partial_t^n (|u|^{p-1}) \partial_t^{k-n}u,i\partial_t^k u  \rangle_h.
\end{multline}
Focusing on the first term on the right hand side, as
\[2 \Re \langle \partial_t^k (|u|^{p-1}) u, -\Delta_h \partial_t^{k-1} u \rangle_h = \sum_{a \in h\Z^d} \partial_t^k(|u|^{p-1}) \left( - \overline{u} \Delta_h \partial_t^{k-1}u -u\Delta_h \partial_t^{k-1}\overline{u}  \right), \]
and noticing that
\[ -\overline{u} \Delta_h \partial_t^{k-1}u-u\Delta_h\partial_t^{k-1} \overline{u} = \partial_t^{k-1}(-\overline{u}\Delta_h u -u \Delta_h \overline{u}) + \Re \sum_{n=0}^{k-2} c_n \partial_t^n \Delta_h u \ \partial_t^{k-1-n} \overline{u}  ,\]
we get from the identity
\[ \Delta_h(|u|^2)=u\Delta_h \overline{u} + \overline{u} \Delta_h u + \left|\nabla_h^+u \right|^2 + \left|\nabla_h^- u \right|^2   \]
and integration by parts that 
\begin{multline} \label{equation_2_proof_modified_energy}
2 \Re \langle \partial_t^k (|u|^{p-1}) u, -\Delta_h \partial_t^{k-1} u \rangle_h =\sum_{a \in h\Z^d} \partial_t^k\nabla_h^+(|u|^{p-1}) \cdot \partial_t^{k-1} \nabla_h^+(|u|^2) \\
+\sum_{a \in h\Z^d} \partial_t^k (|u|^{p-1}) \partial_t^{k-1}(|\nabla_h^+u|^2) + \sum_{a \in h\Z^d} \partial_t^k (|u|^{p-1}) \partial_t^{k-1}(|\nabla_h^-u|^2) \\
 + \Re \sum_{n=0}^{k-2}c_n \sum_{a \in h\Z^d} \partial_t^k(|u|^{p-1}) \partial_t^n\Delta_h u \ \partial_t^{k-1-n} \overline{u}.
\end{multline}

 We then infer that from elementary computations, for all $a \in h\Z^d$,
\[ \nabla_h^+ (|u(t,a)|^{p-1})= \left( \nabla_{h,j}^+(|u(t,a)|^2) \sum_{\ell=1}^{\frac{p-1}{2}} |u(t,a)|^{p-1-2\ell} |u(t,a+he_j)|^{2\ell-2} \right)_{1\leq j \leq d}^{\top},  \]
so the first term in the right hand side of \eqref{equation_2_proof_modified_energy} can be written
\begin{multline*}
\sum_{a \in h\Z^d} \partial_t^k\nabla_h^+(|u(t,a)|^{p-1}) \cdot \partial_t^{k-1} \nabla_h^+(|u(t,a)|^2) \\
= \sum_{a \in h\Z^d} \sum_{j=1}^d \partial_t^k \left( \nabla_{h,j}^+(|u(t,a)|^2) \sum_{\ell=1}^{\frac{p-1}{2}} |u(t,a)|^{p-1-2\ell} |u(t,a+he_j)|^{2\ell-2} \right) \partial_t^{k-1} \left(\nabla_{h,j}^+|u(t,a)|^2 \right) \\
= \frac12 \sum_{a \in h\Z^d} \sum_{j=1}^d \partial_t \left|\partial_t^{k-1} \nabla_{h,j}^+ (|u(t,a)|^2)\right|^2 \sum_{\ell =1}^{\frac{p-1}{2}} |u(t,a)|^{p-1-2\ell} |u(t,a+he_j)|^{2\ell-2} \\
+\sum_{n=0}^{k-1} c_n \sum_{a \in h\Z^d} \sum_{j=1}^d \partial_t^n \nabla_{h,j}^+(|u(t,a)|^2) \partial_t^{k-1} \nabla_{h,j}^+(|u(t,a)|^2)  \sum_{\ell=1}^{\frac{p-1}{2}} \partial_t^{k-n} \left( |u(t,a)|^{p-1-2 \ell} |u(t,a+h e_j)|^{2 \ell-2} \right) ,
\end{multline*}
so we conclude by combining this last equatity with equations \eqref{equation_1_proof_modified_energy} and \eqref{equation_2_proof_modified_energy} and using the expression of $\mathcal{E}_{2k}(u(t))$.
\end{proof}

We then prove that the norms $\|\partial_t^k u \|_{L^2_h}$ and $\| u \|_{H^{2k}_h}$ are comparable, so that the leading term in the modified energy $\mathcal{E}_{2k}(u)$ is equivalent to the Sobolev norm $\| u \|_{H^{2k}_h}$:

\begin{proposition}
Let $u$ be a solution to \eqref{DNLS} with $u_0 \in H^{2k}(h\Z^d)$ and parameters $(\lambda,d,p)$ satisfying~\eqref{set_parameters}, then
\begin{equation} \label{equivalence_sobolev_time_derivative_equation}
\| \partial_t^k u - i^k\Delta_h^k u\|_{H^s_h} \leq C \|u \|_{H^{s+2k-1}_h},
\end{equation}
for any $s \geq 0$, where $C=C\left(k,s,\|u\|_{H^1_h}\right)>0$ is independent of $h$.
\end{proposition}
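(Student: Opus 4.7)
The plan is to proceed by induction on $k \geq 1$, using \eqref{DNLS} to trade time derivatives for spatial ones. Setting $R_k := \partial_t^k u - i^k\Delta_h^k u$, the identity $\partial_t u = i\Delta_h u - i\lambda|u|^{p-1}u$ yields the recursion
$$R_{k+1} = i\Delta_h R_k - i\lambda\,\partial_t^k\bigl(|u|^{p-1}u\bigr).$$
Expanding this inductively, each $R_k$ turns out to be a finite sum of monomials in $u$, $\bar u$ and their forward/backward discrete spatial derivatives, with the total number of spatial derivatives appearing in any single monomial bounded by $2(k-1)$.

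For the base case $k=1$ one must prove the Moser-type inequality $\||u|^{p-1}u\|_{H^s_h} \leq C\|u\|_{H^{s+1}_h}$ with $C=C(s,\|u\|_{H^1_h})$. This follows from a discrete fractional Leibniz (Kato--Ponce type) estimate that distributes the $s$ fractional derivatives onto a single factor of $u$ and controls the remaining $p-1$ factors in a suitable discrete Lebesgue norm. In $d=1$ the discrete embedding $H^1_h \hookrightarrow L^\infty_h$ bounds these factors by $\|u\|_{H^1_h}^{p-1}$ uniformly in $h$, and the remaining $\|u\|_{H^s_h}$ is trivially dominated by $\|u\|_{H^{s+1}_h}$. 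In $d=2$ the embedding $H^1_h \hookrightarrow L^\infty_h$ fails uniformly in $h$, so one instead uses large but finite $L^q_h$ norms controlled by $\|u\|_{H^1_h}$ via discrete Gagliardo--Nirenberg, and absorbs the resulting $\eps$-loss of regularity into the spare derivative $s+1$ versus $s$.

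For the inductive step from $k$ to $k+1$, the first term is handled directly: $\|\Delta_h R_k\|_{H^s_h} \leq \|R_k\|_{H^{s+2}_h}$, which by the induction hypothesis applied at regularity $s+2$ is bounded by $C\|u\|_{H^{s+2k+1}_h}$. For the nonlinear term, the Leibniz rule expands $\partial_t^k(|u|^{p-1}u)$ as a finite sum of products of the form $\partial_t^{\ell_1}u^{\sharp_1}\cdots\partial_t^{\ell_r}u^{\sharp_r}$ with $\sum_j \ell_j = k$ and $u^{\sharp_j}\in\{u,\bar u\}$; substituting $\partial_t^{\ell_j}u = i^{\ell_j}\Delta_h^{\ell_j}u + R_{\ell_j}$ via the induction hypothesis rewrites each product as a combination of monomials of total spatial order at most $2k$. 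Applying the same tame product estimate as in the base case monomial by monomial, placing all spatial derivatives on a single factor while bounding the others in $L^q_h$ via $\|u\|_{H^1_h}$, yields $\|\partial_t^k(|u|^{p-1}u)\|_{H^s_h} \leq C\|u\|_{H^{s+2k+1}_h}$, which closes the induction since $s+2k+1 = s+2(k+1)-1$.

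The main obstacle is establishing a discrete tame product estimate with a constant depending only on $\|u\|_{H^1_h}$, uniformly in $h$. While fairly standard in the continuous setting, the discrete analogues recalled in Appendix~\ref{appendix_section} must be combined carefully, especially in $d=2$ where the failure of $H^1_h \hookrightarrow L^\infty_h$ uniformly in $h$ forces one to rely on interpolation with a large-but-finite $L^q_h$ norm.
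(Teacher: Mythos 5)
Your proposal is correct and follows essentially the same route as the paper: an induction on $k$ in which \eqref{DNLS} converts time derivatives into discrete Laplacians plus time derivatives of the nonlinearity (your recursion for $R_k$ is exactly the paper's closed-form expansion $\partial_t^{\alpha} u = i^{\alpha}\Delta_h^{\alpha}u + \sum_n c_n \partial_t^n\Delta_h^{\alpha-n-1}(|u|^{p-1}u)$ unrolled), followed by a tame product estimate using the $h$-uniform embedding of $H^1_h$ into finite $L^q_h$ for $d\le 2$ and interpolation against the conserved $H^1_h$ bound. The only cosmetic difference is that the paper distributes the spatial derivatives over all $p$ factors, placing each in $W^{s_\ell,2p}_h$ so that the interpolation exponents sum exactly to one, whereas you load them onto a single factor and control the rest in $L^\infty_h$ or large $L^q_h$.
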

\begin{proof}
 We prove the result by induction on $k$. The case $k=0$ is obvious. Using the expression of equation \eqref{DNLS}, we easily show that for all $\alpha \in \N$, 
\[ \partial_t^{\alpha} u = i^{\alpha} \Delta_h^{\alpha} u + \sum_{n=0}^{\alpha-1} c_n \partial_t^n \Delta_h^{\alpha-n-1}(|u|^{p-1}u)   \]
for suitable constants $c_n \in \C$, so we get that
\[ \| \partial_t^{k+1}u -i^{k+1} \Delta_h^{k+1} u \|_{H^s_h} \lesssim \sum_{n=0}^k \| \partial_t^n(|u|^{p-1}u) \|_{H^{2k-2n+s}_h}  . \]
Expanding time and spaces derivatives, and using the fact that
\[ \sum_{a\in h\Z^d} |u(t,a)| |u(t,a+\beta h)| \leq \frac12 \left(\sum_{a\in h\Z^d} |u(t,a)|^2 +\sum_{a\in h\Z^d}|u(t,a+\beta h)|^2   \right) \leq \|u(t)\|_{L^2_h}^2   \]
for all $\beta \in \Z^d$, we get by Hölder inequality that for all $0 \leq n \leq k$,
\[ \|\partial_t^n (|u|^{p-1}u) \|_{H^{s+2k-2n}_h} \lesssim \prod_{\substack{n_1+\ldots+n_p=j \\ s_1+\ldots+s_p=2k-2n+s}} \|\partial_t^{n_1} u \|_{W^{s_1,2p}_h} \ldots \|\partial_t^{n_p} u \|_{W^{s_p,2p}_h}.  \]
Then using discrete Sobolev injections $H^1(h\Z^d) \subset L^{2p}(h\Z^d)$ as well as the induction hypothesis, we get that for all $1\leq \ell \leq p$, 
\[ \| \partial_t^{n_{\ell}}u \|_{W^{s_{\ell},2p}_h } \lesssim \| \partial_t^{n_{\ell}}u \|_{H^{s_{\ell}+1}_h} \leq \| u \|_{H^{s_{\ell}+1+2n_{\ell}}},  \]
and from interpolation for $\theta_{\ell}(s+2k)=2 n_{\ell}+s_{\ell}$ with $0<\theta_{\ell}<1$
we conclude that 
\[ \| u \|_{H^{s_{\ell}+1+2n_{\ell}}} \leq \| u \|_{H^{s+2k+1}_h}^{\theta_{\ell}} \underbrace{\| u \|_{H^1_h}^{1-\theta_{\ell}} }_{\leq C}. \]
As the above sum on $n$ and the products on $(n_1,\ldots,n_p)$ and $(s_1,\ldots,s_p)$ are finite, we get the result.
\end{proof}

We now prove the following a priori bound:
\begin{proposition}
Let $u$ be a solution to \eqref{DNLS} with $u_0 \in H^{2k}(h\Z^d)$ and parameters $(\lambda,d,p)$ satisfying~\eqref{set_parameters} and $s\in \left\{0,1\right\}$, then for all $0<T\leq 1$ and all $\eps>0$,
\begin{equation} \label{nonlinear_strichartz_estimate}
\|  \partial_t^n u\|_{L^6_T W^{s,4}_h} \lesssim_{d,p,\eps} \| u \|_{L^{\infty}_T H^{2n+s}_h}^{\frac{3}{4}} \| u \|_{L^{\infty}_T H^{2j+s+1}_h}^{\frac{1}{4}} \| u \|_{L^{\infty}_T H^{2j+2}_h}^{\eps(p-1)}.
\end{equation}
\end{proposition}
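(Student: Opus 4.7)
The plan is to view \eqref{nonlinear_strichartz_estimate} as a Strichartz-type bound on the discrete linear Schr\"odinger equation satisfied by $v := \partial_t^n u$. Differentiating \eqref{DNLS} $n$ times in time gives
\[
i\partial_t v + \Delta_h v = \lambda\, \partial_t^n(|u|^{p-1}u),
\]
and Duhamel's formula then splits $v$ into a free evolution and an integral source:
\[
v(t) = e^{it\Delta_h}\partial_t^n u(0) - i\lambda \int_0^t e^{i(t-\tau)\Delta_h}\, \partial_t^n(|u|^{p-1}u)(\tau)\, d\tau.
\]
The proof reduces to controlling each of these pieces in $L^6_T W^{s,4}_h$ via the discrete Strichartz estimates of Lemma \ref{discrete_strichartz_lemma}. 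Since the pair $(6,4)$ is not Strichartz-admissible in $d=1,2$ at the endpoint, Strichartz will be applied at an admissible pair combined with a Sobolev embedding on the $L^4$ side, at the cost of a small fractional-derivative loss $\eps'$ that will ultimately be absorbed into the $\eps(p-1)$ exponent.

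For the homogeneous piece, discrete Strichartz bounds it by $\|\partial_t^n u(0)\|_{H^{s+\eps'}_h}$, which through the equivalence \eqref{equivalence_sobolev_time_derivative_equation} and the uniform bound \eqref{uniform_bound_H1} reduces to $\|u_0\|_{H^{2n+s+\eps'}_h}$ plus lower-order terms, and an interpolation between $H^{2n+s}_h$ and $H^{2n+s+1}_h$ then produces the desired $3/4,\,1/4$ split. For the inhomogeneous piece, the dual Strichartz estimate reduces the task to controlling $\|\partial_t^n(|u|^{p-1}u)\|_{L^{\tilde q'}_T W^{s,\tilde r'}_h}$ for a dual admissible pair $(\tilde q, \tilde r)$.

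To estimate that nonlinear source, I would expand $\partial_t^n(|u|^{p-1}u)$ by the generalised Leibniz rule into a finite sum of monomials $\partial_t^{n_1}u_1 \cdots \partial_t^{n_p}u_p$ with $\sum_\ell n_\ell = n$ and each $u_\ell \in \{u, \overline{u}\}$. Apply H\"older in the spatial variable, placing the factor that carries the most time derivatives (together with the spatial derivative of order $s$) into $W^{s,4}_h$ and the other $p-1$ factors into $L^\infty_h$ via the discrete Sobolev embedding $H^{d/2+\eps'}_h \hookrightarrow L^\infty_h$. Converting each $\partial_t^m u$ back to spatial regularity through \eqref{equivalence_sobolev_time_derivative_equation}, the dominant factor is then split via a discrete Gagliardo--Nirenberg interpolation into $\|u\|_{H^{2n+s}_h}^{3/4}\|u\|_{H^{2n+s+1}_h}^{1/4}$, while each sub-critical factor is interpolated between $\|u\|_{H^1_h}$ (controlled by \eqref{uniform_bound_H1}) and $\|u\|_{H^{2n+2}_h}$, contributing only an $\eps$-power of the latter.

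The hard part will be the combinatorial book-keeping. First, the Strichartz pair has to be chosen so that the $\eps'$-loss from replacing $(6,4)$ by a truly admissible pair is paid only on the sub-critical factors, never on the dominant one; otherwise the powers on $H^{2n+s}_h$ and $H^{2n+s+1}_h$ would change. Second, no matter how the $n$ time derivatives are distributed among the $p$ factors of the monomial, the conversion via \eqref{equivalence_sobolev_time_derivative_equation} must never raise any sub-critical factor above $H^{2n+2}_h$; arranging this, and then gathering all sub-critical contributions into the single factor $\|u\|_{H^{2n+2}_h}^{\eps(p-1)}$, is the main technical obstacle to close the estimate.
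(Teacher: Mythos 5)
Your proposal follows essentially the same route as the paper: Duhamel for $\partial_t^n u$, discrete Strichartz with a fractional derivative loss, interpolation between $H^s_h$ and $H^{s+1}_h$ to produce the $3/4$--$1/4$ exponents, and a Leibniz/H\"older/$L^\infty_h$-embedding/interpolation treatment of $\partial_t^n(|u|^{p-1}u)$ via \eqref{equivalence_sobolev_time_derivative_equation} and the uniform $H^1_h$ bound. Two small corrections: $(6,4)$ is in fact admissible for $d=2$ under \eqref{discrete_admissible_pair} (only $d=1$ needs an adjustment, e.g.\ $(12,4)$ plus H\"older in time on $[0,T]$), and no dual-pair inhomogeneous estimate is needed --- the paper uses the cruder bound \eqref{inhomogeneous_discrete_strichartz} in $\|F\|_{\mathcal{C}(\R;H^{1/q}_h)}$, which produces the factor $T$, and it deliberately takes the derivative loss on the leading term, since interpolating the $H^{s+1/4}_h$ norm between $H^s_h$ and $H^{s+1}_h$ is precisely what yields the $(3/4,1/4)$ split (your worry about keeping the loss off the dominant factor is therefore unnecessary).
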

\begin{proof}
Using discrete Strichartz estimates from Lemma \ref{discrete_strichartz_lemma} on the equation satisfied by $\partial_t^n u$ for any $n \in \N$, we infer that
\begin{align*}
\| \partial_t^n u \|_{L^6_T W^{s,4}_h} & \lesssim  \| \partial_t^n u(0) \|_{H^{s+\frac{1}{4}}_h} + T \| \partial_t^n (|u|^{p-1}u) \|_{L^{\infty}_T H^{s+\frac{1}{4}}_h} \\
& \lesssim \| \partial_t^n u(0) \|_{H^{s}_h}^{\frac{3}{4}}  \| \partial_t^n u(0) \|_{H^{s+1}_h}^{\frac{1}{4}} + T \| \partial_t^n (|u|^{p-1}u) \|_{L^{\infty}_T H^{s}_h}^{\frac{3}{4}} \| \partial_t^n (|u|^{p-1}u) \|_{L^{\infty}_T H^{s}_h}^{\frac{1}{4}}
\end{align*}  
Using equation \eqref{equivalence_sobolev_time_derivative_equation}, we easily handle the first term in the right hand side of the previous inequality, namely
\[ \|\partial_t^n u(0) \|_{H^s_h} \leq \| \partial_t^nu(0) - i^n\Delta_h^n u(0) \|_{H^s_h} +\| \Delta_h^n u(0) \|_{H^s_h} \lesssim \|u_0\|_{H^{s+2n}_h}   \]
as well as 
\[ \|\partial_t^n u(0) \|_{H^s_h} \lesssim \|u_0\|_{H^{s+2n+1}_h}.  \]
On the other hand, expanding the time derivative on the second term of the right hand side, as well as using the discrete bilinear estimate 
\begin{equation} \label{bilinear_estimate_L_infty}
\| f g \|_{L^{\infty}_h} \lesssim_{\eps} \| f \|_{H^1_h}^{1-\eps} \| f \|_{H^2_h}^{\eps}
\end{equation}  
which stands for all $\eps>0$ as $d \leq 2$, we get that
\begin{align*}
 \| \partial_t^n (|u|^{p-1}u) \|_{H^s_h} & \lesssim \sum_{n_1+\ldots+n_p=n} \| \partial_t^{n_1} u \|_{H^s_h} \| \partial_t^{n_2} u \|_{L^{\infty}_h} \ldots \| \partial_t^{n_p} u \|_{L^{\infty}_h} \\
 & \lesssim \sum_{n_1+\ldots+n_p=n} \| \partial_t^{n_1} u \|_{H^s_h} \| \partial_t^{n_2} u \|_{H^1_h}^{1-\eps} \| \partial_t^{n_2} u \|_{H^2_h}^{\eps}\ldots \| \partial_t^{n_p} u \|_{H^1_h}^{1-\eps} \| \partial_t^{n_p} u \|_{H^2_h}^{\eps},
 \end{align*}
which gives using equation \eqref{equivalence_sobolev_time_derivative_equation} that
\[ \| \partial_t^n (|u|^{p-1}u) \|_{H^s_h} \lesssim \sum_{n_1+\ldots+n_p=n} \| u\|_{H^{2n_1+s}_h} \| u\|_{H^{2n_2+1}_h}^{1-\eps}  \| u\|_{H^{2n_2+2}_h}^{\eps} \ldots \| u\|_{H^{2n_p+1}_h}^{1-\eps}  \| u\|_{H^{2n_p+2}_h}^{\eps}. \] 
Denoting 
\[ \theta_1(2n+s)+(1-\theta_1)=2n_1+s \quad \text{and} \quad \theta_1(2n+s)+(1-\theta_1)=2n_1+1    \]
for all $2\leq l \leq p$, we get by interpolation that 
\[ \| \partial_t^n (|u|^{p-1}u) \|_{H^s_h} \lesssim \| u \|_{H^{2n+s}_h}^{\theta_1} \| u \|_{H^1_h}^{1-\theta_1} \| u \|_{H^{2n+s}_h}^{\theta_2(1-\eps)} \| u \|_{H^1_h}^{(1-\theta_2)(1-\eps)} \ldots \| u \|_{H^{2n+s}_h}^{\theta_p(1-\eps)} \| u \|_{H^1_h}^{(1-\theta_p)(1-\eps)} \| u \|_{H^{2n+2}_h}^{\eps(p-1)}  \]
so we get the result noticing that $\sum_{l=1}^p \theta_l=1$ and using the uniform $H^1$ bound on $u$.
\end{proof}

We can now state and prove the main result of this section, which is the following:
\begin{proposition} \label{prop_estimate_modified_energy}
Let $u$ be a solution to \eqref{DNLS} with $u_0 \in H^{2k}(h\Z^d)$ and parameters $(\lambda,d,p)$ satisfying~\eqref{set_parameters}, then for all $0<T\leq 1$ and $\eps>0$ arbitrary small, we have
\[ \left| \mathcal{E}_{2k}(u(T)) \right| \lesssim \left|\mathcal{E}_{2k}(u_0) \right| + T \| u \|_{L^{\infty}_T H^{2k}_h}^{\frac{4k-4}{2k-1}+\eps} + T^{\frac{2}{3}} \| u \|_{L^{\infty}_T H^{2k}_h}^{\frac{4k-\frac{5}{2}}{2k-1}+\eps}.   \]
\end{proposition}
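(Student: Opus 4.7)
My plan is to integrate equation~\eqref{time_derivative_modified_energy} from $0$ to $T$, which yields
\[ \mathcal{E}_{2k}(u(T)) = \mathcal{E}_{2k}(u_0) + \sum_{i=1}^7 \int_0^T \mathcal{J}_i(t)\, dt, \]
so that the task reduces to bounding each $\int_0^T |\mathcal{J}_i(t)|\, dt$ by the right-hand side of the claimed estimate. I will first expand every time derivative $\partial_t^n$ appearing in each $\mathcal{J}_i$ using the Leibniz rule, reducing the integrand to a finite sum of products of factors of the form $\partial_t^{n_j} u$, $\partial_t^{n_j} \overline{u}$, or $\partial_t^{n_j} \nabla^{\pm}_{h,\ell} u$, the total number of time derivatives appearing in each product being $2k$ for $\mathcal{J}_3$--$\mathcal{J}_7$ and $2k-1$ (plus one extra spatial derivative) for $\mathcal{J}_1$--$\mathcal{J}_2$.

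Each resulting spatial integral will be handled by Hölder inequality in space combined with three ingredients: the identification \eqref{equivalence_sobolev_time_derivative_equation}, which lets me replace every factor $\partial_t^{n_j} u$ by its spatial norm of order roughly $2n_j$ (up to a lower-order remainder absorbed by the same arguments); the bilinear $L^\infty_h$ embedding \eqref{bilinear_estimate_L_infty}, which controls the factors carrying few time derivatives at the cost of an $\eps$ loss; and a standard interpolation between the uniformly bounded norm $\|u\|_{H^1_h}$, provided by \eqref{uniform_bound_H1}, and the top norm $\|u\|_{H^{2k}_h}$, which is ultimately what I want to estimate.

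The two distinct powers of $T$ in the conclusion will come from Hölder in time applied in two different ways. The linear factor $T$ is obtained by placing every factor in $L^\infty_T$ of the appropriate discrete Sobolev space, which works whenever the spatial Hölder inequality does not force us to share the top-order regularity between two distinct factors. The factor $T^{2/3}$ is obtained by placing two factors in $L^6_T W^{s,4}_h$, controlled through the nonlinear Strichartz estimate \eqref{nonlinear_strichartz_estimate}, while the remaining factors sit in $L^\infty_T$; the leftover time integrand then lies in $L^{3/2}_T$, which gives a $T^{2/3}$ prefactor. This second regime is the one that will allow me to avoid spending two full $\|u\|_{H^{2k}_h}$ budgets on problematic subproducts.

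The hard part will be the precise bookkeeping of the interpolation exponents. In the dominant subproducts of $\mathcal{J}_1$ and $\mathcal{J}_2$, two factors essentially equivalent to $H^{2k-1}_h$ norms of $u$ appear, and interpolating them between $H^1_h$ and $H^{2k}_h$ gives exponent $\theta = (2k-2)/(2k-1)$ on each, leading exactly to the total power $(4k-4)/(2k-1)$ on $\|u\|_{L^\infty_T H^{2k}_h}$; the other factors, coming from the expansions of $|u|^{p-1-2\ell}|u(\cdot + he_j)|^{2\ell-2}$, are of low order and contribute only via \eqref{bilinear_estimate_L_infty}. The same exponent controls the worst subproducts of $\mathcal{J}_3$--$\mathcal{J}_7$; whenever this accounting forces two factors to the Strichartz level, the quarter-derivative gain from each application of \eqref{nonlinear_strichartz_estimate} produces the better exponent $(4k-5/2)/(2k-1)$ together with the weaker time factor $T^{2/3}$. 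Since both totals stay strictly below $2$ for every fixed $k$, this prepares the bootstrap that will close the proof of Theorem~\ref{growth_discrete_sobolev_norms_theorem}.
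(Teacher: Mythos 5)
Your plan coincides with the paper's proof in every essential respect: integrating \eqref{time_derivative_modified_energy}, expanding the Leibniz products, and bounding each $\int_0^T|\mathcal{J}_i|$ via Hölder in space, the identification \eqref{equivalence_sobolev_time_derivative_equation}, the bilinear bound \eqref{bilinear_estimate_L_infty}, interpolation against the uniform $H^1_h$ bound (i.e.\ \eqref{uniform_bound_H^s}), and the Strichartz estimate \eqref{nonlinear_strichartz_estimate} placed in $L^6_T$ to produce the $T^{2/3}$ factor, with exactly the exponents $\frac{4k-4}{2k-1}$ and $\frac{4k-\frac52}{2k-1}$ you predict. The only blemish is the derivative count in your second sentence (e.g.\ $\mathcal{J}_3$--$\mathcal{J}_6$ carry $2k-1$ time derivatives, some with two spatial ones), which does not affect the argument.
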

\begin{proof}
Let's first remark that as
\[ \left| \mathcal{E}_{2k}(u(T))\right| \leq \left|\mathcal{E}_{2k}(u_0)\right| + \int_0^T \left|\frac{\dd }{\dd t} \mathcal{E}_{2k}(u(t))\right| \dd t ,  \]
we are brought back to estimate the time integral of the terms $\mathcal{J}_{\ell}$ for $\ell=1,\ldots,7$ in the right hand side of equation \eqref{time_derivative_modified_energy}. From now on we will denote by $\eps>0$ an arbitrary small constant which may change from line to line, and we will extensively used the estimate
\begin{equation} \label{uniform_bound_H^s}
\|u\|_{L^{\infty}_T H^s_h} \lesssim \| u\|_{L^{\infty}_T H^{2k}_h}^{\frac{s-1}{2k-1}}
\end{equation}
for any $1 \leq s \leq 2k$, which immediately follows from an interpolation inequality with the uniform $H^1$ bound \eqref{uniform_bound_H1}. From norm equivalence and Hölder inequality, we infer 
\[ \int_0^T \left|\mathcal{J}_1(t) \right| \dd t \leq  \sum_{k_1+k_2=k-1} \int_0^T \| \partial_t^{k_1} u \|_{W^{1,4}_h}^2 \| \partial_t^{k_2} u \|_{L^{\infty}_h}^2 \| \partial_t u \|_{L^2_h} \|u \|_{L^{\infty}_h}^{p-4} \dd t. \]
For $k_1+k_2=k-1$, beyond the uniform bound $\| u \|_{L^{\infty}_T L^{\infty}_h}^{p-4} \lesssim C$, we easily get that
\[ \| \partial_t u \|_{L^{\infty}_T L^2_h} \lesssim \| u \|_{L^{\infty}_T H^2_h} \]
from \eqref{equivalence_sobolev_time_derivative_equation} as well as 
\[  \| \partial_t^{k_2} u \|_{ L^{\infty}_T L^{\infty}_h}^2 \lesssim \| \partial_t^{k_2} u \|_{L^{\infty}_T H^1_h}^{2(1-\eps)} \| \partial_t^{k_2} u \|_{L^{\infty}_T H^2_h}^{2\eps} \lesssim \| u \|_{L^{\infty}_T H^{2k_2+1}_h}^{2(1-\eps)} \| u \|_{L^{\infty}_T H^{2k_2+2}_h}^{2\eps} \]
using classical interpolation. We use estimate \eqref{nonlinear_strichartz_estimate} in order to bound the last remaining term, namely
\[ \| \partial_t^{k_1} u \|_{L^2_T W^{1,4}_h}^2  \leq T^{\frac{2}{3}} \| \partial_t^{k_1} u \|_{L^6_T W^{1,4}_h}^2 \lesssim T^{\frac{2}{3}} \| u \|_{L^{\infty}_T H^{2k_1+1}_h}^{\frac{3}{2}} \| u \|_{L^{\infty}_T H^{2k_1+2}_h}^{\frac{1}{2}} \| u \|_{L^{\infty}_T H^{2k_1+2}_h}^{\eps}.\]
Combining these bounds with \eqref{uniform_bound_H^s} we get that
\begin{equation} \label{bound_J1}
\int_0^T \left|\mathcal{J}_1(t) \right| \dd t \lesssim T^{\frac{2}{3}} \| u \|_{L^{\infty}_T H^{2k}_h}^{\frac{4k-\frac{5}{2}}{2k-1}+\eps}.
\end{equation}
The same way, from discrete norm equivalence and Hölder inequality, we get that
\[ \int_0^T \left|\mathcal{J}_2(t) \right| \dd t  \leq \sum_{n=0}^{k-1} |c_n| \sum_{\mathcal{A}_{n,k}} \int_0^T \| \partial_t^{n_1} u \|_{L^{\infty}_h} \| \partial_t^{n_2} u \|_{W^{1,4}_h} \| \partial_t^{k_1} u \|_{L^{\infty}_h} \| \partial_t^{k_2} u \|_{W^{1,4}_h} \| \partial_t^{m_1} u \|_{L^2_h} \prod_{\ell=2}^{p-3}  \| \partial_t^{m_{\ell}} u \|_{L^{\infty}_h}\]
 where the set $\mathcal{A}_{n,k}$ is defined for every $n \in \left\{0,\ldots,k-1\right\}$ by
 \[ \mathcal{A}_{n,k} = \enstq{(n_1,n_2,k_1,k_2,m_1,\ldots,m_{p-3})}{n_1+n_2=n, \ k_1+k_2=k-1, \ m_1+\ldots +m_{p-3}=k-n}.  \]
As for $\mathcal{J}_1$, we can estimate each term separately, using the bilinear estimate \eqref{bilinear_estimate_L_infty} to handle $L^{\infty}(h\Z^d)$ norms uniformly in time, and the nonlinear Strichartz estimate \eqref{nonlinear_strichartz_estimate} for $W^{1,4}(h\Z^d)$ norms combined with Hölder inequality for the time integrability, which gives 
\begin{equation}\label{bound_J2}
 \begin{aligned} 
 \int_0^T \left|\mathcal{J}_2(t) \right| \dd t  & \lesssim T^{\frac{2}{3}} \| u \|_{L^{\infty}_T H^{2n_1+1}_h}^{1-\eps} \| u \|_{L^{\infty}_T H^{2n_2+1}_h}^{\frac{3}{4}} \| u \|_{L^{\infty}_T H^{2n_2+2}_h}^{\frac{1}{4}} \| u \|_{L^{\infty}_T H^{2k_1+1}_h}^{1-\eps} \| u \|_{L^{\infty}_T H^{2k_2+1}_h}^{\frac{3}{4}}  \\
 &\quad \quad \times \| u \|_{L^{\infty}_T H^{2k_2+2}_h}^{\frac{1}{4}} \|u\|_{L^{\infty}_T H^{2m_1}_h} \left( \prod_{\ell=2}^{p-3} \|u\|_{L^{\infty}_T H^{2m_{\ell}+1}_h}^{1-\eps} \right) \| u \|_{L^{\infty}_T H^{2k}_h}^{\eps} \\
 & \lesssim T^{\frac{2}{3}}\| u \|_{L^{\infty}_T H^{2k}_h}^{\frac{4k-\frac{5}{2}}{2k-1}+\eps}.
 \end{aligned}
 \end{equation}
 where we have used the estimate \eqref{uniform_bound_H^s} on each separate term in the last step. Focusing on $\mathcal{J}_3$ (or by obvious symmetry on $\mathcal{J}_4$), we infer by Hölder inequality that 
 \[  \int_0^T \left|\mathcal{J}_3(t) \right| \dd t  \leq \sum_{\substack{k_1+k_2=k-1 \\ n_1+\ldots+n_{p-1}=k}} \int_0^T \|\partial_t^{n_1} u \|_{L^2_h} \left( \prod_{\ell=2}^{p-1} \| \partial_t^{n_{\ell}} u \|_{L^{\infty}_h}  \right) \| \partial_t^{k_1} u \|_{W^{1,4}_h} \| \partial_t^{k_2} u \|_{W^{1,4}_h}, \]
 so that estimating each term as above we get that
\begin{equation}\label{bound_J3}
 \begin{aligned}  \int_0^T \left|\mathcal{J}_3(t) \right| \dd t & \lesssim T^{\frac{2}{3}} \| u \|_{L^{\infty}_T H^{2 n_1}_h} \left( \prod_{\ell=2}^{p-1} \| u \|_{L^{\infty}_T H^{2n_{\ell}+1}_h}^{1-\eps}  \right) \| u \|_{L^{\infty}_T H^{2 k_1+1}_h}^{\frac{3}{4}}  \| u \|_{L^{\infty}_T H^{2 k_1+2}_h}^{\frac{1}{4}} \\
& \quad \quad \times \| u \|_{L^{\infty}_T H^{2 k_2+1}_h}^{\frac{3}{4}}  \| u \|_{L^{\infty}_T H^{2 k_2+2}_h}^{\frac{1}{4}} \|u\|_{L^{\infty}_T H^{2k}_h}^{\eps} \\
 & \lesssim T^{\frac{2}{3}}\| u \|_{L^{\infty}_T H^{2k}_h}^{\frac{4k-\frac{5}{2}}{2k-1}+\eps}.
  \end{aligned}
 \end{equation}
 Arguing as above, we also get that 
  \begin{multline}  \label{bound_J5}
 \int_0^T \left|\mathcal{J}_5(t) \right| \dd t  \\
  \leq \sum_{n=0}^{k-1} |c_n| \sum_{\substack{n_1+\ldots+n_{p-1}=n \\ k_1+\ldots+k_p=k-1 \\ n_1=\max(n_1,\ldots,n_{p-1})}} \int_0^T \| \partial_t^{n_1} u \|_{L^2_h} \left( \prod_{\ell=2}^{p-1} \| \partial_t^{n_{\ell}} u\|_{L^{\infty}_h}  \right) \|\partial_t^{k-n} u \|_{L^{\infty}_h}    \|\partial_t^{k_1} u \|_{L^2_h} \left(\prod_{\ell=2}^p \| \partial_t^{k_{\ell}} u\|_{L^{\infty}_h}  \right) \\
  \lesssim T \| u \|_{L^{\infty}_T H^{2 n_1}_h} \left( \prod_{\ell=2}^{p-1} \| u\|_{L^{\infty}_T H^{2n_{\ell}+1}}^{1-\eps}  \right) \| u \|_{L^{\infty}_T H^{2k-2n+1}_h}^{1-\eps} \|u\|_{L^{\infty}_T H^{2k}_h}^{\eps}  \| u \|_{L^{\infty}_T H^{2 k_1}_h} \left( \prod_{\ell=2}^{p} \| u\|_{L^{\infty}_T H^{2k_{\ell}+1}}^{1-\eps}  \right) \\
  \lesssim T\| u \|_{L^{\infty}_T H^{2k}_h}^{\frac{4k-\frac{5}{2}}{2k-1}+\eps}.
  \end{multline}
 Note that as no $W^{1,4}(h\Z^d)$ is involved in the above estimate, we do not trade space regularity for time integrability by Strichartz inequality \eqref{nonlinear_strichartz_estimate}, and we end up having a straight linear bound with respect to $T$. We end up our analysis by estimating $\mathcal{J}_6$ and $\mathcal{J}_7$ similarly to $\mathcal{J}_1$, $\mathcal{J}_2$, $\mathcal{J}_3$ and $\mathcal{J}_4$, namely
 \begin{equation}\label{bound_J6}
 \begin{aligned}  \int_0^T \left|\mathcal{J}_6(t) \right| \dd t & \leq \sum_{n=0}^{k-2} |c_n| \sum_{ \substack{k_1+\ldots+k_{p-1}=k \\ k_1=\max(k_1,\ldots,k_{p-1})}} \int_0^T \| \partial_t^{k_1} u \|_{L^2_h} \left( \prod_{\ell=2}^{p-1} \| \partial_t^{k_{\ell}} u \|_{L^{\infty}_h} \right) \| \partial_t^n  \Delta_h u \|_{L^4_h} \| \partial_t^{k-1-n} u \|_{L^4_h} \\ 
 & \lesssim T^{\frac{2}{3}} \| u \|_{L^{\infty}_T H^{2 k_1}_h} \left( \prod_{\ell=2}^{p-1} \| u \|_{L^{\infty}_T H^{2k_{\ell}+1}_h}^{1-\eps}  \right) \| u \|_{L^{\infty}_T H^{2 n+2}_h}^{\frac{3}{4}}  \| u \|_{L^{\infty}_T H^{2 n+3}_h}^{\frac{1}{4}} \\
& \quad \quad \times \| u \|_{L^{\infty}_T H^{2k-2n-2}_h}^{\frac{3}{4}}  \| u \|_{L^{\infty}_T H^{2 k-2n-1}_h}^{\frac{1}{4}} \|u\|_{L^{\infty}_T H^{2k}_h}^{\eps} \\
 & \lesssim T^{\frac{2}{3}}\| u \|_{L^{\infty}_T H^{2k}_h}^{\frac{4k-\frac{5}{2}}{2k-1}+\eps}
  \end{aligned}
 \end{equation}
 and 
 \begin{multline}\label{bound_J7}  
 \int_0^T \left|\mathcal{J}_7(t) \right| \dd t  \leq \sum_{n=0}^{k-1} |c_n| \sum_{ n_1+\ldots+n_{p-1}=n } \int_0^T \| \partial_t^{n_1} u \|_{L^4_h} \left( \prod_{\ell=2}^{p-1} \| \partial_t^{n_{\ell}} u \|_{L^{\infty}_h} \right) \| \partial_t^{k-n}  u \|_{L^4_h} \| \partial_t^k\overline{u} \|_{L^2_h} \\ 
  \lesssim T^{\frac{2}{3}} \| u \|_{L^{\infty}_T H^{2 n_1}_h}^{\frac{3}{4}} \| u \|_{L^{\infty}_T H^{2 n_1+1}_h}^{\frac{1}{4}} \left( \prod_{\ell=2}^{p-1} \| u \|_{L^{\infty}_T H^{2n_{\ell}+1}_h}^{1-\eps}  \right) \| u \|_{L^{\infty}_T H^{2k-2n}_h}^{\frac{3}{4}}  \| u \|_{L^{\infty}_T H^{2 k-2n+1}_h}^{\frac{1}{4}}\|u\|_{L^{\infty}_T H^{2k}_h}^{1+\eps} \\
  \lesssim T^{\frac{2}{3}}\| u \|_{L^{\infty}_T H^{2k}_h}^{\frac{4k-\frac{5}{2}}{2k-1}+\eps}.
 \end{multline}
 Finally, summing estimates \eqref{bound_J1}, \eqref{bound_J2}, \eqref{bound_J3}, \eqref{bound_J5}, \eqref{bound_J6} and \eqref{bound_J7} ends the proof.
\end{proof}

We then deduce the key estimate in order to prove Theorem \ref{growth_discrete_sobolev_norms_theorem}:

\begin{proposition}
Let $u$ be a solution to \eqref{DNLS} with $u_0 \in H^{2k}(h\Z^d)$ and parameters $(\lambda,d,p)$ satisfying~\eqref{set_parameters}, then for all $0<T\leq 1$ and $\eps>0$ arbitrary small, we have
\begin{equation} \label{control_H_2k_norm_equation}
\| u(T) \|_{H^{2k}_h}^2-\| u_0 \|_{H^{2k}_h}^2 \lesssim \| u \|_{L^{\infty}_T H^{2k}_h}^{\frac{4k-4}{2k-1}+\eps} + T^{\frac{2}{3}} \| u \|_{L^{\infty}_T H^{2k}_h}^{\frac{4k-\frac{5}{2}}{2k-1}+\eps}.
\end{equation}
\end{proposition}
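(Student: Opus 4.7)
The plan is to use the modified energy $\mathcal{E}_{2k}$ as a proxy for $\|u\|_{H^{2k}_h}^2$, since the time evolution of $\mathcal{E}_{2k}$ has already been controlled in Proposition~\ref{prop_estimate_modified_energy}. First, I would write the telescopic identity
\[ \|u(T)\|_{H^{2k}_h}^2 - \|u_0\|_{H^{2k}_h}^2 = \bigl[\|u(T)\|_{H^{2k}_h}^2 - \mathcal{E}_{2k}(u(T))\bigr] + \bigl[\mathcal{E}_{2k}(u(T)) - \mathcal{E}_{2k}(u_0)\bigr] + \bigl[\mathcal{E}_{2k}(u_0) - \|u_0\|_{H^{2k}_h}^2\bigr]. \]
The middle bracket is handled directly by Proposition~\ref{prop_estimate_modified_energy} (which produces precisely the two terms on the right-hand side of \eqref{control_H_2k_norm_equation}, using $T \leq 1$), and the last bracket is a fixed constant depending only on $u_0$ and thus absorbed into the implicit constant. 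The whole game therefore reduces to proving the pointwise equivalence
\[ \bigl|\|u\|_{H^{2k}_h}^2 - \mathcal{E}_{2k}(u)\bigr| \lesssim 1 + \|u\|_{H^{2k}_h}^{\frac{4k-4}{2k-1}+\eps}. \]

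To prove this equivalence, I would first compare the leading term $\|\partial_t^k u\|_{L^2_h}^2$ in $\mathcal{E}_{2k}(u)$ with $\|\Delta_h^k u\|_{L^2_h}^2 = \|u\|_{\dot H^{2k}_h}^2$. From the expansion used in the proof of~\eqref{equivalence_sobolev_time_derivative_equation}, we have $\partial_t^k u = i^k \Delta_h^k u + R$ with $R = \sum_{n=0}^{k-1} c_n \partial_t^n \Delta_h^{k-n-1}(|u|^{p-1}u)$, hence
\[ \|\partial_t^k u\|_{L^2_h}^2 - \|\Delta_h^k u\|_{L^2_h}^2 = 2\Re\langle R, i^k \Delta_h^k u\rangle_h + \|R\|_{L^2_h}^2. \]
The square is controlled via \eqref{equivalence_sobolev_time_derivative_equation} and the interpolation \eqref{uniform_bound_H^s}: $\|R\|_{L^2_h}^2 \lesssim \|u\|_{H^{2k-1}_h}^2 \lesssim \|u\|_{H^{2k}_h}^{(4k-4)/(2k-1)}$. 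For the cross term, I would use the self-adjointness of $\Delta_h$ to redistribute the $2k-1$ Laplacians symmetrically between the two factors, then apply Cauchy–Schwarz together with discrete Moser-type estimates, bounding each factor $\partial_t^{n_\ell} u$ either in $L^2_h$ via \eqref{equivalence_sobolev_time_derivative_equation} or in $L^\infty_h$ via \eqref{bilinear_estimate_L_infty}, before finally applying \eqref{uniform_bound_H^s} to every factor. A direct (but careful) bookkeeping of exponents then yields the target bound $\|u\|_{H^{2k}_h}^{(4k-4)/(2k-1)+\eps}$.

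The remaining correction terms of $\mathcal{E}_{2k}$, namely $\|\partial_t^{k-1}(|u|^{p-1}u)\|_{L^2_h}^2$ and the sum involving $|\partial_t^{k-1}\nabla_{h,j}^+(|u|^2)|^2$, are treated by essentially the same scheme: expand by Leibniz, put exactly one factor in $L^2_h$ and all others in $L^\infty_h$, and interpolate each factor via \eqref{uniform_bound_H^s}. The worst case, where the $L^2_h$ factor carries no time derivative, gives precisely the exponent $(4k-4)/(2k-1)+\eps$. Combined with the elementary observation $\|u\|_{H^{2k}_h}^2 - \|u\|_{\dot H^{2k}_h}^2 = \sum_{j=0}^{2k-1}\|u\|_{\dot H^j_h}^2 \lesssim 1 + \|u\|_{H^{2k}_h}^{(4k-4)/(2k-1)}$, this completes the equivalence and hence the proposition.

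The main obstacle will be the cross term $\langle R, \Delta_h^k u\rangle_h$: the crude Cauchy–Schwarz bound $\|R\|_{L^2_h}\|u\|_{\dot H^{2k}_h}$ produces the exponent $(4k-3)/(2k-1)$, which exceeds the target by $1/(2k-1)$. Saving this half-derivative \emph{per side} by symmetrizing the pairing through discrete integration by parts (using $\langle \Delta_h^a f, \Delta_h^b g\rangle_h = \langle \Delta_h^{a+b} f, g\rangle_h$, and then re-splitting as evenly as possible) is the key technical point; once this is done, all other contributions fit the scheme used throughout Proposition~\ref{prop_estimate_modified_energy}.
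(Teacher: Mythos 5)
Your proposal is correct and follows essentially the same route as the paper: the increment of $\|u\|_{H^{2k}_h}^2$ is reduced to the increment of $\mathcal{E}_{2k}$ (controlled by Proposition~\ref{prop_estimate_modified_energy}) plus the discrepancy $\bigl|\mathcal{E}_{2k}(u)-\|u\|_{H^{2k}_h}^2\bigr|$, which is bounded pointwise in time by $\|u\|_{H^{2k}_h}^{\frac{4k-4}{2k-1}+\eps}$ via \eqref{equivalence_sobolev_time_derivative_equation}, the $L^{\infty}_h$ bilinear estimate and the interpolation bound \eqref{uniform_bound_H^s}. You are in fact more explicit than the paper on one point: the cross term $\langle R, i^k\Delta_h^k u\rangle_h$ arising when comparing $\|\partial_t^k u\|_{L^2_h}^2$ with $\|u\|_{\dot{H}^{2k}_h}^2$, whose naive Cauchy--Schwarz bound yields the too-large exponent $\frac{4k-3}{2k-1}$, does need to be symmetrized by redistributing the powers of $\Delta_h$ between the two factors exactly as you describe, a step the paper's final display glosses over.
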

\begin{proof}
Writing $\mathcal{R}_{2k}(u):=\mathcal{E}_{2k}(u)-\| \partial_t^k u\|_{L^2_h}$ using equation \eqref{modified_energy}, we first focus on estimating both terms of $\mathcal{R}_{2k}(u)$ in the same way than in the proof of Proposition \ref{prop_estimate_modified_energy}: from norm equivalence and equation \eqref{uniform_bound_H^s}, we get that
\begin{multline*}
\sum_{a \in h\Z^d} \sum_{j=1}^d \left| \partial_t^{k-1} \nabla_{h,j}^+ ( |u(t,a)|^2) \right|^2  \sum_{\ell=1}^{\frac{p-1}{2}} |u(t,a)|^{p-1-2\ell} |u(t,a+he_j)|^{2\ell-2} \\
\lesssim \sum_{k_1+k_2=k-1} \| \partial_t^{k_1} u \|_{H^{1}_h}^2 \| \partial_t^{k_2} u \|_{L^{\infty}_h}^2 \| u \|_{L^{\infty}_h}^{p-3} \lesssim \sum_{k_1+k_2=k-1}  \| u \|_{H^{2 k_1+1}_h}^2 \| u \|_{H^{2k_2+1}_h}^2 \| u \|_{H^{2k}_h}^{\eps} \lesssim \| u \|_{H^{2k}_h}^{\frac{4k-4}{2k-1}+\eps}
\end{multline*}
and
\begin{align*}
\sum_{a \in h\Z^d} \left| \partial_t^{k-1} (|u(t,a)|^{p-1}u(t,a)) \right|^2 & \lesssim \sum_{k_1+\ldots+k_p=k-1} \|\partial_t^{n_1} u \|_{L^2_h}^2 \left( \prod_{\ell=2}^p \| \partial_t^{n_{\ell}} u \|_{L^{\infty}_h}^2  \right) \\
& \lesssim \sum_{n_1+\ldots+n_p=k-1} \| u \|_{H^{2n_1}_h}^2 \left( \prod_{\ell=2}^p \| u \|_{H^{2 n_{\ell}+1}_h}^2  \right) \|u\|_{H^{2k}_h}^{\eps} \lesssim \| u\|_{H^{2k}_h}^{\frac{4k-6}{2k-1}+\eps}.
\end{align*}
We then infer from equation \eqref{modified_energy} and Proposition \ref{prop_estimate_modified_energy} that 
\[ \| \partial_t^k u (T)\|_{L^2_h}^2 - \| \partial_t^k u (0)\|_{L^2_h}^2 \lesssim  \| u \|_{L^{\infty}_T H^{2k}_h}^{\frac{4k-4}{2k-1}+\eps}+ \| u\|_{L^{\infty}_TH^{2k}_h}^{\frac{4k-6}{2k-1}+\eps}+ \| u \|_{L^{\infty}_T H^{2k}_h}^{\frac{4k-4}{2k-1}+\eps} + T^{\frac{2}{3}} \| u \|_{L^{\infty}_T H^{2k}_h}^{\frac{4k-\frac{5}{2}}{2k-1}+\eps},  \]
so using equations \eqref{equivalence_sobolev_time_derivative_equation} and \eqref{uniform_bound_H^s}, we finally get that
\[ \|u(t)\|_{H^{2k}_h}^2 - \|u(0)\|_{H^{2k}_h}^2 \lesssim \| \partial_t^k u(t) \|_{L^2_h}^2 + \| u \|_{H^{2k-1}_h}^2 \lesssim \| u \|_{L^{\infty}_T H^{2k}_h}^{\frac{4k-4}{2k-1}+\eps} + T^{\frac{2}{3}} \| u \|_{L^{\infty}_T H^{2k}_h}^{\frac{4k-\frac{5}{2}}{2k-1}+\eps},\]
which gives the result.
\end{proof}

\subsection{Odd Sobolev norms} We now suppose that $m=2k+1$ with $k \in \N^*$. The modified energy $\mathcal{E}_{2k+1}(u)$ is now defined for all $t\geq 0$ by
\begin{multline} \label{modified_energy_odd}
\mathcal{E}_{2k+1}(u(t))= \frac12 \| \partial_t^k \nabla^+_h u \|_{L^2_h}^2 + \frac12 \sum_{a \in h\Z^d} |u(t,a)|^{p-1} |\partial_t^k u(t,a) |^2 + \Re \sum_{n=1}^{k-1} c_n \langle \partial_t^n u\ \partial_t^{k-n} (|u|^{p-1}),\partial_t^k u \rangle_h \\
 + \frac{p-1}{8} \left\langle |u|^{p-3} , \left| \partial_t^k(|u|^2) \right|^2  \right\rangle_h + \sum_{n=1}^{k-1} c_n \langle \partial_t^{k-n} (|u|^{p-3}) \partial_t^n(|u|^2),\partial_t^k(|u|^2) \rangle_h
\end{multline}
As in the even case, differentiating $\mathcal{E}_{2k+1}(u(t))$ with respect to time, we have the following result:

\begin{proposition}
Let $u$ be a solution to \eqref{DNLS} with $u_0 \in H^{2k+1}(h\Z^d)$ and parameters $(\lambda,d,p)$ satisfying~\eqref{set_parameters}, then
\begin{multline*}
\frac{\dd}{\dd t} \mathcal{E}_{2k+1}(u(t))=  \Re \sum_{n=1}^{k-1} c_n \langle \partial_t^{n+1} u \ \partial_t^{k-n}(|u|^{p-1}),\partial_t^k u \rangle_h + \Re \sum_{n=1}^{k-1} c_n \langle \partial_t^n u \ \partial_t^{k-n+1}(|u|^{p-1}), \partial_t^k u \rangle_h \\
+ \frac12 \langle \partial_t (|u|^{p-1}), |\partial_t^k u|^2  \rangle_h + \frac{p-1}{8} \left\langle \partial_t(|u|^{p-3}) , \left| \partial_t^k(|u|^2) \right|^2  \right\rangle_h +  \sum_{n=1}^k c_n \langle \partial_t^k(|u|^{p-1}) \partial_t^n u , \partial_t^{k+1-n} u \rangle_h\\ 
+ \sum_{n=1}^{k-1} c_n \langle \partial_t^{k-n+1}(|u|^{p-3}) \partial_t^n(|u|^2),\partial_t^k(|u|^2)  \rangle_h +\sum_{n=1}^{k-1} c_n \langle \partial_t^{k-n}(|u|^{p-3}) \partial_t^{n+1}(|u|^2),\partial_t^k(|u|^2)  \rangle_h.
\end{multline*}
\end{proposition}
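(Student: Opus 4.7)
My plan is to differentiate each of the five summands in \eqref{modified_energy_odd} via the product rule and then systematically eliminate every occurrence of $\partial_t^{k+1}u$ or $\partial_t^{k+1}(|u|^2)$ using the equation \eqref{DNLS} in the form $\Delta_h \partial_t^k u = \partial_t^k(|u|^{p-1}u) - i\,\partial_t^{k+1}u$, since no such top-order derivative appears in the claimed expression. As in the even case \eqref{time_derivative_modified_energy}, the modified energy \eqref{modified_energy_odd} is designed precisely so that all these high-order pieces cancel pairwise, leaving only terms in which the $k$ time derivatives are distributed across at least two factors.

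The central cancellation is between the first two summands. Differentiating the gradient term and using the discrete integration by parts identity $\langle \nabla_h^+ f, \nabla_h^+ g\rangle_h = -\langle f, \Delta_h g\rangle_h$ (which follows from $\Delta_h = \sum_j \nabla_{h,j}^-\nabla_{h,j}^+$) gives
\[ \tfrac12 \tfrac{\dd}{\dd t} \|\partial_t^k \nabla_h^+ u\|_{L^2_h}^2 = -\Re\langle \partial_t^{k+1}u, \Delta_h \partial_t^k u\rangle_h = -\Re\langle \partial_t^{k+1}u, \partial_t^k(|u|^{p-1}u)\rangle_h, \]
where I substituted the equation and used $\Re\langle \partial_t^{k+1}u, -i\partial_t^{k+1}u\rangle_h = 0$. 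Expanding $\partial_t^k(|u|^{p-1}u)$ by Leibniz, the $n=0$ piece cancels exactly the contribution $+\Re\langle \partial_t^{k+1}u,|u|^{p-1}\partial_t^k u\rangle_h$ from differentiating $\tfrac12\sum_a |u|^{p-1}|\partial_t^k u|^2$, leaving $\tfrac12\langle \partial_t(|u|^{p-1}),|\partial_t^k u|^2\rangle_h$ (the third summand of the claim). The intermediate Leibniz pieces $-\binom{k}{n}\Re\langle \partial_t^{k+1}u, \partial_t^n(|u|^{p-1})\partial_t^{k-n}u\rangle_h$ for $1\le n\le k-1$ are then absorbed, upon choosing $c_n = \binom{k}{n}$, by the $\partial_t^{k+1}u$ contribution coming from differentiating the third summand $\Re\sum_{n=1}^{k-1} c_n\langle \partial_t^n u\,\partial_t^{k-n}(|u|^{p-1}), \partial_t^k u\rangle_h$ of \eqref{modified_energy_odd}; the two other derivatives of that summand produce precisely the first two sums in the stated formula.

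The same scheme handles the quartic block $\frac{p-1}{8}\langle |u|^{p-3},|\partial_t^k(|u|^2)|^2\rangle_h + \sum_{n=1}^{k-1} c_n \langle \partial_t^{k-n}(|u|^{p-3})\partial_t^n(|u|^2), \partial_t^k(|u|^2)\rangle_h$. Its derivative produces the weight $\frac{p-1}{8}\langle \partial_t(|u|^{p-3}), |\partial_t^k(|u|^2)|^2\rangle_h$ (the fourth summand of the claim), together with the two sums constituting the last two summands (from derivatives falling on $\partial_t^n(|u|^2)$ or $\partial_t^{k-n}(|u|^{p-3})$), plus terms involving $\partial_t^{k+1}(|u|^2) = 2\Re(\partial_t^{k+1}u\,\overline{u}) + (\text{lower orders})$. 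Replacing $\partial_t^{k+1}u$ by the equation once more and combining with the still-unhandled piece $-\Re\langle \partial_t^{k+1}u, \partial_t^k(|u|^{p-1})\,u\rangle_h$ left over from the kinetic block, the chain rule $\partial_t^k(|u|^{p-1}) = \tfrac{p-1}{2}|u|^{p-3}\partial_t^k(|u|^2) + (\text{products of strictly lower-order }\partial_t^j(|u|^2))$ together with a reindexing $n\leftrightarrow k-n$ show that everything collapses, for an appropriate choice of constants, onto the fifth summand $\sum_{n=1}^k c_n \langle \partial_t^k(|u|^{p-1})\partial_t^n u, \partial_t^{k+1-n}u\rangle_h$.

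The main obstacle I anticipate is purely combinatorial: tracking the explicit constants $c_n$ (built from Leibniz multinomials and Faà di Bruno coefficients) carefully enough to certify that every top-order cancellation is exact. Since these precise values will play no role in the forthcoming estimates of Proposition~\ref{prop_estimate_modified_energy}, where only the algebraic position of the time derivatives and the summation ranges matter, I will denote them generically by $c_n$ and allow them to change from line to line, in line with the convention already employed in the statement.
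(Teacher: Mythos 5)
Your proposal is correct and follows essentially the same route as the paper: a single use of the equation on the kinetic term after discrete integration by parts, Leibniz expansion, and matching of the exact time-derivative pieces against the remaining summands of $\mathcal{E}_{2k+1}$, with the constants $c_n$ left generic. One small correction: the equation is not needed ``once more'' for the quartic block --- the leftover piece $-\Re\langle\partial_t^{k+1}u,\partial_t^k(|u|^{p-1})u\rangle_h$ is converted into $\partial_t^{k+1}(|u|^2)$ terms by the purely algebraic identity $2\Re\bigl(u\,\overline{\partial_t^{k+1}u}\bigr)=\partial_t^{k+1}(|u|^2)-\sum_{n=1}^{k}\binom{k+1}{n}\partial_t^{n}u\,\partial_t^{k+1-n}\overline{u}$ (whence the fifth summand), after which the chain rule on $\partial_t^k(|u|^{p-1})$ and the total-derivative structure yield the fourth and the last two summands exactly as you describe.
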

\begin{proof}
From integration by parts and using the expression of equation \eqref{DNLS} we get that
\[ \frac12 \frac{\dd}{\dd t} \| \partial_t^k \nabla_h^+ u \|_{L^2_h}^2 = -\Re \langle \partial_t^{k+1}u,\partial_t^k(|u|^{p-1}u) \rangle_h,   \]
so that expanding the time derivatives we infer 
\begin{multline*}
\frac12 \frac{\dd}{\dd t} \| \partial_t^k \nabla_h^+ u \|_{L^2_h}^2 = - \Re \langle\partial_t^k(|u|^{p-1})u,\partial_t^{k+1}u \rangle_h-\Re \langle |u|^{p-1} \partial_t^k u,\partial_t^{k+1} u \rangle_h  \\ 
- \Re \sum_{n=1}^{k-1} c_n \langle \partial_t^n u\ \partial_t^{k-n}(|u|^{p-1}),\partial_t^{k+1} u \rangle_h \\
=- \Re\langle \partial_t^k(|u|^{p-1})u,\partial_t^{k+1}u \rangle_h - \frac12 \frac{\dd}{\dd t} \left( \sum_{a \in h\Z^d} |u|^{p-1} |\partial_t^k u |^2 \right) + \frac12 \langle \partial_t (|u|^{p-1}), |\partial_t^k u|^2  \rangle_h\\
- \frac{\dd}{\dd t} \left( \Re \sum_{n=1}^{k-1} c_n \langle \partial_t^n u\ \partial_t^{k-n} (|u|^{p-1}),\partial_t^k u \rangle_h \right) + \Re \sum_{n=1}^{k-1} c_n \langle \partial_t^{n+1} u \ \partial_t^{k-n}(|u|^{p-1}),\partial_t^k u \rangle_h \\
+ \Re \sum_{n=1}^{k-1} c_n \langle \partial_t^n u \ \partial_t^{k-n+1}(|u|^{p-1}), \partial_t^k u \rangle_h .
\end{multline*}
We now focus on the first term in the right hand side of the above equation, noticing that as $p$ is odd we have $\partial_t(|u|^{p-1}) = \frac{p-1}{2} \partial_t(|u|^2)|u|^{p-3}$, so we can write that
\begin{multline*}
-\Re \langle \partial_t^k(|u|^{p-1})u,\partial_t^{k+1} u \rangle_h = -\frac12 \langle \partial_t^k ( |u|^{p-1}),\partial_t^{k+1} (|u|^2) \rangle_h - \sum_{n=1}^k c_n \langle \partial_t^k(|u|^{p-1}) \partial_t^n u ,\partial_t^{k+1-n} u \rangle_h \\
= - \frac{p-1}{4} \langle |u|^{p-3} \partial_t^k(|u|^2), \partial_t^{k+1}(|u|^2) \rangle_h - \sum_{n=1}^{k-1} c_n \langle \partial_t^{k-n}(|u|^{p-3}) \partial_t^n (|u|^2),\partial_t^{k+1}(|u|^2) \rangle_h \\
- \sum_{n=1}^k c_n \langle \partial_t^k(|u|^{p-1}) \partial_t^n u ,\partial_t^{k+1-n} u \rangle_h \\
= -\frac{p-1}{8} \frac{\dd}{\dd t} \left( \left\langle |u|^{p-3} , \left| \partial_t^k(|u|^2) \right|^2  \right\rangle_h \right) + \frac{p-1}{8} \left\langle \partial_t(|u|^{p-3}) , \left| \partial_t^k(|u|^2) \right|^2  \right\rangle_h \\
+ \frac{\dd}{\dd t} \left( \sum_{n=1}^{k-1} c_n \langle \partial_t^{k-n} (|u|^{p-3}) \partial_t^n(|u|^2),\partial_t^k(|u|^2) \rangle_h \right) + \sum_{n=1}^{k-1} c_n \langle \partial_t^{k-n+1}(|u|^{p-3}) \partial_t^n(|u|^2),\partial_t^k(|u|^2)  \rangle_h \\
+\sum_{n=1}^{k-1} c_n \langle \partial_t^{k-n}(|u|^{p-3}) \partial_t^{n+1}(|u|^2),\partial_t^k(|u|^2)  \rangle_h +  \sum_{n=1}^k c_n \langle \partial_t^k(|u|^{p-1}) \partial_t^n u , \partial_t^{k+1-n} u \rangle_h,
\end{multline*}
which ends the proof.
\end{proof}

The proof of the estimate
\begin{equation} \label{control_H_2k+1_norm_equation}
\| u(T) \|_{H^{2k+1}_h}^2-\| u(0) \|_{H^{2k+1}_h}^2 \leq \| u \|_{L^{\infty}_T H^{2k+1}_h}^{\frac{4k-2}{2k}+\eps} + T^{\frac{2}{3}} \| u \|_{L^{\infty}_T H^{2k+1}_h}^{\frac{4k-\frac12}{2k}+\eps},
\end{equation} 
which stands as the exact analog of equation \eqref{control_H_2k_norm_equation} in the odd case, follows the same lines as its even counterpart: taking advantage of the estimate
\[   \| \partial_t^k \nabla^+_h u - i^k\Delta_h^k \nabla^+_h u\|_{H^s_h} \leq C \|u \|_{H^{s+2k}_h}, \quad s\geq 0, \]
as well as the nonlinear Strichartz estimate \eqref{nonlinear_strichartz_estimate}, we can show that
\[\left| \mathcal{E}_{2k+1}(u(T)) \right| \lesssim \left|\mathcal{E}_{2k+1}(u(0)) \right| + \| u \|_{L^{\infty}_T H^{2k+1}_h}^{\frac{4k-2}{2k}+\eps} + T^{\frac{2}{3}} \| u \|_{L^{\infty}_T H^{2k+1}_h}^{\frac{4k-\frac12}{2k}+\eps}   \]
the same way as for the proof of Proposition \ref{prop_estimate_modified_energy}, which in turns implies equation \eqref{control_H_2k+1_norm_equation} estimating the remaining terms in equation \eqref{modified_energy_odd} as in the proof of equation \eqref{control_H_2k_norm_equation}.

\subsection{Proof of Theorem \ref{growth_discrete_sobolev_norms_theorem}}
Let $m \in \N^*$ such that $m\geq 2$, $u_0 \in H^m(h\Z^d)$ and parameters $(\lambda,d,p)$ satisfying~\eqref{set_parameters}, and let $u$ be the unique global solution of \eqref{DNLS} with $u(0,\cdot)=u_0$. We then have for all $\tau>0$ and $\eps>0$ arbitrary small that
\[ \| u(\tau) \|_{H^m_h}^2-\| u_0 \|_{H^m_h}^2 \lesssim \| u \|_{L^{\infty}_{\tau} H^m_h}^{\frac{2m-4}{m-1}+\eps} + \tau^{\frac{2}{3}} \| u \|_{L^{\infty}_{\tau} H^m_h}^{\frac{2m-\frac{5}{2}}{m-1}+\eps}.  \]
Writing $\alpha=\frac{2m-\frac{5}{2}}{m-1}+\eps<2$ for $\eps>0$ small enough, and as $\frac{2m-4}{m-1}+\eps< \alpha$, for $\tau$ small enough we get that for all $t \geq 0$,
\[ \| u(t+\tau)\|_{H^m_h}^2 \leq \| u(t)\|_{H^m_h}^2 + C\left(1+\| u\|_{L^{\infty}(\left[t,t+\tau \right];H^m(h\Z^d))}^{\alpha} \right).  \]
From energy conservation we infer 
\[ \| u(t+\tau)\|_{H^m_h}^2 \leq \| u(t)\|_{H^m_h}^2 + C\left(1+\| u\|_{H^m_h}^{\alpha} \right),   \]
so the sequence $U_n:=1+\| u(n\tau)\|_{H^m_h}^2$ satisfies $U_{n+1} \leq U_n + C U_n^{\frac{\alpha}{2}}$. As $\alpha<2$, we get by induction that $U_n \lesssim n^{\frac{4}{4-\alpha}}$, which yields to the polynomial bound
\[ \| u \|_{L^{\infty}_T H^m_h} \leq C (1+T^{2(m-1)+\eps})  \]
for all $T>0$, ending the proof of Theorem \ref{growth_discrete_sobolev_norms_theorem}.

\section{Strong convergence in Sobolev spaces} \label{convergence_section}
We have now all the tools at hand in order to prove Theorem \ref{convergence_sobolev_DNLS_theorem}. Let $\delta>d/2$, $m=\left \lceil{\delta}\right \rceil $ and $\alpha> \frac{d}{2} + m$ with the set of parameters $(\lambda,d,p)$ satisfying \eqref{set_parameters}. Let $\psi_0 \in H^{\alpha}(\R^d)$, so that for all $t \in \R$, there exists a unique solution $\psi \in \mathcal{C}(\R;H^{\alpha}(\R^d))$ of equation \eqref{NLS} such that $\psi(0)=\psi_0$. We denote $u_0= \Pi_h \psi_0$, which induces a unique solution $u$ of equation \eqref{DNLS}. In particular 
\[\|u_0\|_{H^m(h\Z^d)} \lesssim \|\psi_0\|_{H^{\alpha}(\R^d)} \]
by Lemma \ref{fourier_projection}, so $\|u_0\|_{H^m(h\Z^d)}$ is uniformly bounded with respect to $h$ . From Duhamel's formula we can write that
\[ \mathcal{S}_h u(t)= \mathcal{S}_h e^{it\Delta_h}u_0 -i\lambda \int_0^t \mathcal{S}_h e^{i(t-\tau)\Delta_h} \left( |u|^{p-1} u \right)(\tau) \dd \tau \]
and
\[  \psi(t)= e^{it\Delta}\psi_0 -i \lambda \int_0^t e^{i(t-\tau)\Delta} \left( |\psi|^{p-1} \psi \right)(\tau) \dd \tau. \]
We will then decompose our analysis on the following integrals
\begin{align*}
\| \mathcal{S}_h u (t) - \psi(t) \|_{H^s(\R^d)} & \leq \| \mathcal{S}_h e^{it\Delta_h}u_0 - e^{it\Delta} \psi_0  \|_{H^s(\R^d)}\\
 	& + |\lambda| \int_0^t \left\| \left( \mathcal{S}_h e^{i(t-\tau)\Delta_h} - e^{i(t-\tau) \Delta } \mathcal{S}_h \right) \left( |u|^{p-1} u \right)(\tau) \right\|_{H^s(\R^d)} \dd \tau \\
 	& +|\lambda| \int_0^t \left\| \mathcal{S}_h \left( |u|^{p-1} u \right)(\tau) -  \left( \left| \mathcal{S}_h u \right|^{p-1} \mathcal{S}_h u \right)(\tau)  \right\|_{H^s(\R^d)} \dd \tau \\
 	& + |\lambda| \int_0^t \left\| \left( \left| \mathcal{S}_h u \right|^{p-1} \mathcal{S}_h u \right)(\tau) -  \left(|\psi|^{p-1} \psi \right)(\tau) \right\|_{H^s(\R^d)} \dd \tau \\
 	& =: J_1(t) + J_2(t) + J_3(t) + J_4(t).
 \end{align*}

Note that the integral $J_1$ is already handle by Proposition \ref{linear_flow_prop}.

\subsection{Linear flow on the nonlinearity}
In order to estimate $J_2$, as $\Pi_h \circ \mathcal{S}_h g = g$ for any $g \in L^2(h\Z^d)$, we write that
\[
\mathcal{S}_h e^{i(t-\tau)\Delta_h}\left( |u|^{p-1} u \right) - e^{i(t-\tau) \Delta } \mathcal{S}_h\left( |u|^{p-1} u \right)  =  \mathcal{S}_h e^{i(t-\tau)\Delta_h}  \Pi_h \circ \mathcal{S}_h \left( |u|^{p-1} u \right) -  e^{i(t-\tau) \Delta }\mathcal{S}_h \left( |u|^{p-1} u \right)  \]
so applying Proposition \ref{linear_flow_prop} to $ \mathcal{S}_h \left( |u|^{p-1} u \right)$, as 
\[  \|\mathcal{S}_h \left( |u|^{p-1} u \right)(\tau) \|_{H^{\delta}(\R^d)} \lesssim \| S_h u(\tau) \|^p_{H^{\delta}(\R^d)} \lesssim \| u(\tau) \|^p_{H^{\delta}(h\Z^d)}< \infty  \]
from consecutively Corollary \ref{subordinate_shannon_prop} and Lemma \ref{boundedness_shannon}, we get that
\begin{align*}
 J_2(t)& \leq C h^{\frac{\delta-s}{2}-\frac{d}{4}} \int_0^t  (1+t-\tau) \| \mathcal{S}_h \left( |u|^{p-1} u \right)(\tau)   \|_{H^{\delta}(\R^d)} \dd \tau \\
 & \leq C h^{\frac{\delta-s}{2}-\frac{d}{4}} (1+t) \int_0^t \| u(\tau) \|_{H^{\delta}(h \Z^d)}^p \dd \tau \\
  & \leq C h^{\frac{\delta-s}{2}-\frac{d}{4}}(1+t^2)(1+t^{2(m-1)+\eps})^p (1+\| u_0 \|_{H^m(h\Z^d)})^p \\
  & \leq C h^{\frac{\delta-s}{2}-\frac{d}{4}}(1+t^{2p(m-1)+2+\eps}) (1+\|\psi_0\|_{H^{\alpha}(\R^d)})^p
 \end{align*}
 applying Theorem \ref{growth_discrete_sobolev_norms_theorem} for $\eps>0$ arbitrary small.

\subsection{Aliasing of Shannon interpolation}
We now focus on $J_3$. We will use the following property, which is a direct corollary of the bilinear estimate of Proposition \ref{aliasing_shannon_prop}:

\begin{corollary} \label{corollary_shannon}
Let $u \in H^{\delta}(h\Z^d)$ with $\delta >d/2$, and let $s \leq \delta$. Let $p=2n+1$, $n\in \N^*$, then
\[ \| \mathcal{S}_h \left( |u|^{p-1}u \right)-  \left|\mathcal{S}_h u\right|^{p-1} \mathcal{S}_h u \|_{H^s(\R^d)} \leq C h^{\delta-s} \| \mathcal{S}_h u \|_{H^{\delta}(\R^d)}^p,  \]
where $C=C(\delta,s,d,p)>0$.
\end{corollary}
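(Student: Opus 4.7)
The plan is to reduce the statement to iterated applications of the bilinear estimate of Proposition \ref{aliasing_shannon_prop} via a telescoping argument. Since $p=2n+1$, we have $|u|^{p-1}u = u^{n+1}\bar{u}^n$, which is a product of $p$ factors each equal to either $u$ or $\bar{u}$. Setting $f_1=\cdots=f_{n+1}=u$ and $f_{n+2}=\cdots=f_p=\bar{u}$, and using the identity $\mathcal{S}_h \bar{u} = \overline{\mathcal{S}_h u}$ established at the end of the proof of Corollary \ref{subordinate_shannon_prop}, the quantity we need to estimate becomes
\[ \mathcal{S}_h\!\left(\prod_{i=1}^p f_i\right) - \prod_{i=1}^p \mathcal{S}_h f_i. \]

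The next step is to telescope. Defining, for $1 \leq k \leq p$,
\[ A_k := \mathcal{S}_h\!\left(\prod_{i=1}^k f_i\right) \cdot \prod_{i=k+1}^p \mathcal{S}_h f_i, \]
with the convention that an empty product equals $1$, we have $A_1 = \prod_{i=1}^p \mathcal{S}_h f_i$ and $A_p = \mathcal{S}_h(\prod_{i=1}^p f_i)$, hence a telescoping identity
\[ \mathcal{S}_h\!\left(\prod_{i=1}^p f_i\right) - \prod_{i=1}^p \mathcal{S}_h f_i = \sum_{k=1}^{p-1} (A_{k+1} - A_k), \]
where each increment factors as
\[ A_{k+1} - A_k = \left[\mathcal{S}_h\!\left(\prod_{i=1}^k f_i \cdot f_{k+1}\right) - \mathcal{S}_h\!\left(\prod_{i=1}^k f_i\right)\mathcal{S}_h f_{k+1}\right]\prod_{i=k+2}^p \mathcal{S}_h f_i. \]

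I would then estimate $\|A_{k+1}-A_k\|_{H^s(\R^d)}$ by combining three ingredients. First, since $\delta > d/2$, $H^\delta(\R^d)$ is an algebra, and the classical product estimate $\|fg\|_{H^s} \lesssim \|f\|_{H^s}\|g\|_{H^\delta}$ (for $0 \leq s \leq \delta$) lets me peel off the factor $\prod_{i=k+2}^p \mathcal{S}_h f_i$ in $H^\delta$-norm. Second, Proposition \ref{aliasing_shannon_prop} applied to the pair $\left(\prod_{i=1}^k f_i, f_{k+1}\right)$ (which lies in $H^\delta(h\Z^d)$ by Corollary \ref{subordinate_shannon_prop} and Lemma \ref{boundedness_shannon}) provides the factor $h^{\delta-s}$ together with two $H^\delta$-norms. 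Third, Corollary \ref{subordinate_shannon_prop} controls the $H^\delta$-norm of each Shannon interpolation of a monomial by $\|\mathcal{S}_h u\|_{H^\delta(\R^d)}^{\text{degree}}$. Putting these together gives
\[ \|A_{k+1} - A_k\|_{H^s(\R^d)} \leq C h^{\delta-s} \|\mathcal{S}_h u\|_{H^\delta(\R^d)}^{k} \|\mathcal{S}_h u\|_{H^\delta(\R^d)} \|\mathcal{S}_h u\|_{H^\delta(\R^d)}^{p-k-1} = C h^{\delta-s} \|\mathcal{S}_h u\|_{H^\delta(\R^d)}^{p}. \]
Summing the $p-1$ terms yields the claimed bound.

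There is no real obstacle here; the only care needed is the bookkeeping: one must verify at each step that the bilinear estimate is applied to objects in $H^\delta(h\Z^d)$ (which Corollary \ref{subordinate_shannon_prop} guarantees, since each partial product $\prod_{i=1}^k f_i$ is a monomial in $u$ and $\bar{u}$), and that peeling off the trailing $\mathcal{S}_h f_i$'s in $H^\delta$ is legitimate, which is precisely where the hypothesis $\delta > d/2$ is used. The passage from real powers $|u|^{p-1}u$ to monomials in $u$ and $\bar{u}$ is exactly where the assumption that $p$ is odd enters.
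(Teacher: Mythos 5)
Your proof is correct and is essentially the paper's own argument: the paper writes $|u|^{p-1}u$ as a monomial in $u,\bar u$ (using $p$ odd), peels off one factor via the triangle inequality, bounds the aliasing term with Proposition \ref{aliasing_shannon_prop} together with Corollary \ref{subordinate_shannon_prop}, controls the remaining product term with Lemma \ref{bilinear_estimate_sobolev} (which is exactly where $\delta>d/2$ enters), and concludes by induction. Your telescoping sum is just the unrolled form of that induction, with identical ingredients and identical bounds on each increment.
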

\begin{proof}
Let's first recall that for all $x\in \R^d$, $\mathcal{S}_h \overline{u}(x)= \overline{\mathcal{S}_h u}(x)$. We then compute that
\begin{align*}
\| \mathcal{S}_h \left( |u|^{p-1}u \right)-  \left|\mathcal{S}_h u\right|^{p-1} \mathcal{S}_h f \|_{H^s(\R^d)} \leq & \| \mathcal{S}_h \left( |u|^{p-1}u \right)- \mathcal{S}_h \left( |u|^{p-1} \right) \mathcal{S}_h u \|_{H^s(\R^d)} \\& + \| \mathcal{S}_h \left( |u|^{p-1} \right) \mathcal{S}_h u - \left|\mathcal{S}_h u\right|^{p-1} \mathcal{S}_h u \|_{H^s(\R^d)}
\end{align*}
For the first term, we use Proposition \ref{aliasing_shannon_prop} so
\begin{align*} \| \mathcal{S}_h \left( |u|^{p-1}u \right)- \mathcal{S}_h \left( |u|^{p-1} \right) \mathcal{S}_h u \|_{H^s(\R^d)} & \leq C h^{\delta-s} \|\mathcal{S}_h u\|_{H^{\delta}(\R^d)} \| \mathcal{S}_h \left( |u|^{p-1} \right) \|_{H^{\delta}(\R^d)}  \\
& \leq C  h^{\delta-s} \|\mathcal{S}_h u\|_{H^{\delta}(\R^d)}^p,
\end{align*}
 using Corollary \ref{subordinate_shannon_prop} and the fact that $\| \mathcal{S}_h u \|_{H^s(\R^d)}=\| \mathcal{S}_h \overline{u} \|_{H^s(\R^d)}$. On the other hand, as $s < s+\delta -\frac{d}{2}$, we can bound the second term
\[ \| \mathcal{S}_h \left( |u|^{p-1} \right) \mathcal{S}_h u - \left|\mathcal{S}_h u\right|^{p-1} \mathcal{S}_h u \|_{H^s(\R^d)} \leq C \| \mathcal{S}_h u \|_{H^{\delta}(\R^d)} \| \mathcal{S}_h \left( |u|^{p-1} \right) - \left|\mathcal{S}_h u\right|^{p-1} \|_{H^s(\R^d)},     \]
using Lemma \ref{bilinear_estimate_sobolev} from Appendix \ref{appendix_section}, and we can conclude by induction as $p=2n+1$ with $n\in\N^*$.
\end{proof}

We then directly apply Corollary \ref{corollary_shannon} to get that
\[ J_3(t) \leq  C h^{\delta-s}(1+t) \sup_{\tau \in \left[0,t\right]} \| u(\tau)\|_{H^{m}(h\Z^d)}^p \leq C h^{\delta-s} (1+t^{2p(m-1)+1+\eps})(1+\|\psi_0\|_{H^{\alpha}(\R^d)})^p \]
using Theorem \ref{growth_discrete_sobolev_norms_theorem}.

\subsection{Gronwall argument}
In order to estimate $J_4$, we notice that
\[ \left| |f|^{p-1}f - |g|^{p-1}g \right| \leq C(|f|+|g|)^{p-1} |f-g| \]
from the fundamental theorem of calculus, so for $0 \leq \tau \leq t$,
\begin{align*}
\left\| \left(\left|\mathcal{S}_h u \right|^{p-1} \mathcal{S}_h u \right)(\tau) -  \left(|\psi|^{p-1} \psi \right)(\tau) \right\|_{H^s(\R^d)} & \lesssim \left\| \left( |\mathcal{S}_h u(\tau)| + |\psi(\tau)|  \right)^{p-1}  \right\|_{H^{\delta}(\R^d)} \| \mathcal{S}_h u(\tau) - \psi(\tau) \|_{H^s(\R^d)} \\
& \lesssim \left( \| \mathcal{S}_h u \|_{H^{\delta}(\R^d)}^{p-1}+\| \psi \|_{H^{\delta}(\R^d)}^{p-1} \right) \| \mathcal{S}_h u(\tau) - \psi(\tau) \|_{H^s(\R^d)}
\end{align*}
as $s<s+\delta-d/2$, hence
\[ J_4(t) \leq C\int_0^t \left( \| u (\tau) \|_{H^{\delta}(h\Z^d)}^{p-1} +\| \psi(\tau) \|_{H^{\delta}(\R^d)}^{p-1}  \right) \| \mathcal{S}_h u(\tau) - \psi(\tau) \|_{H^s(\R^d)} \dd \tau.\]
As before, we use Theorem \ref{growth_discrete_sobolev_norms_theorem} in order to bound the evolution of Sobolev norms, so that
\[ J_4(t) \leq C (1+\|\psi_0\|_{H^{\alpha}(\R^d)})^{p-1} \int_0^t (1+\tau^{2(p-1)(m-1)+\eps}) \| \mathcal{S}_h u(\tau) - \psi(\tau) \|_{H^s(\R^d)} \dd \tau. \]  
Gathering all these inequalities, on respectively $J_1$, $J_2$, $J_3$ and $J_4$, we conclude by Gronwall lemma, which direclty gives equation \eqref{convergence_sobolev_DNLS}, ending the proof of Theorem \ref{convergence_sobolev_DNLS_theorem}.

\subsection*{Acknowledgements}
The author is supported by the Labex CEMPI (ANR-11-LABX-0007-01). The author is grateful to Joackim Bernier for helping suggestions and remarks at early stage of this work.

\appendix

\section{Useful estimates} \label{appendix_section}

\subsection{Continuous inequalities} 
We first recall a classical bilinear estimate:

\begin{lemma} \label{bilinear_estimate_sobolev}
Let $f \in H^{s_1}(\R^d)$ and $g \in H^{s_2}(\R^d)$. Let $s \geq 0$ such that
\[ s\leq s_1,s_2 \quad \text{and} \quad s < s_1+s_2 - \frac{d}{2}.   \]
Then
\[ \| f g \|_{H^s(\R^d)} \leq C  \| f \|_{H^{s_1}(\R^d)} \| g \|_{H^{s_2}(\R^d)}. \]
\end{lemma}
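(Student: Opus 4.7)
The plan is to prove this classical statement by a two-step Fourier-analytic argument: first reduce to the case $s=0$, and then prove that case directly.

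For the reduction, I would use the elementary inequality
\[ (1+|\xi|^2)^{s/2} \leq 2^s \left( (1+|\eta|^2)^{s/2} + (1+|\xi-\eta|^2)^{s/2} \right), \quad s \geq 0, \]
which follows from $\max(|\eta|, |\xi-\eta|) \geq |\xi|/2$. Applied to the Fourier representation $\mathcal{F}(fg) = \mathcal{F}f \ast \mathcal{F}g$ and combined with the triangle inequality, this bounds $\|fg\|_{H^s(\R^d)}$ up to a constant by
\[ \| F_1 \cdot G \|_{L^2(\R^d)} + \| F \cdot G_1 \|_{L^2(\R^d)}, \]
where $F, F_1, G, G_1$ are the functions whose Fourier transforms equal $|\mathcal{F}f|$, $(1+|\cdot|^2)^{s/2}|\mathcal{F}f|$, $|\mathcal{F}g|$ and $(1+|\cdot|^2)^{s/2}|\mathcal{F}g|$ respectively. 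Since $\|F_1\|_{H^{s_1-s}} = \|f\|_{H^{s_1}}$, $\|G\|_{H^{s_2}} = \|g\|_{H^{s_2}}$ (and symmetrically), and since $s \leq s_1, s_2$ implies $s_1-s, s_2 \geq 0$ while $s < s_1 + s_2 - d/2$ implies $(s_1-s) + s_2 > d/2$, it is enough to prove the $s = 0$ version: for all $\sigma_1, \sigma_2 \geq 0$ with $\sigma_1 + \sigma_2 > d/2$,
\[ \|fg\|_{L^2(\R^d)} \leq C \|f\|_{H^{\sigma_1}(\R^d)} \|g\|_{H^{\sigma_2}(\R^d)}. \]

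For this reduced statement, Plancherel turns the problem into bounding $\|\mathcal{F}f \ast \mathcal{F}g\|_{L^2}$. Pointwise Cauchy-Schwarz in the convolution, after factoring the weights $(1+|\eta|^2)^{\sigma_1/2}$ and $(1+|\xi-\eta|^2)^{\sigma_2/2}$ in and out, yields
\[ \bigl| (\mathcal{F}f \ast \mathcal{F}g)(\xi) \bigr|^2 \leq K(\xi) \cdot (a^2 \ast b^2)(\xi), \]
where $a(\eta) = (1+|\eta|^2)^{\sigma_1/2}|\mathcal{F}f(\eta)|$, $b(\eta) = (1+|\eta|^2)^{\sigma_2/2}|\mathcal{F}g(\eta)|$, and
\[ K(\xi) = \int_{\R^d} (1+|\eta|^2)^{-\sigma_1} (1+|\xi-\eta|^2)^{-\sigma_2} \, \dd \eta. \]
Integrating in $\xi$ and invoking Fubini (equivalently Young's convolution inequality $\|a^2 \ast b^2\|_{L^1} = \|a\|_{L^2}^2 \|b\|_{L^2}^2$) produces
\[ \|\mathcal{F}f \ast \mathcal{F}g\|_{L^2}^2 \leq \sup_{\xi \in \R^d} K(\xi) \cdot \|a\|_{L^2}^2 \|b\|_{L^2}^2, \]
which reduces the proof to the uniform bound $\sup_{\xi} K(\xi) < \infty$.

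This uniform control on $K$ is the main obstacle, and is a classical convolution estimate for Bessel-type weights. Splitting $\R^d$ into the three regions $\{|\eta| \leq |\xi|/2\}$, $\{|\xi-\eta| \leq |\xi|/2\}$ and their complement (on which both $|\eta|$ and $|\xi-\eta|$ are at least $|\xi|/2$), and using monotonicity of the weights on each piece, one obtains $K(\xi) \lesssim (1+|\xi|^2)^{d/2 - \sigma_1 - \sigma_2}$ when both $\sigma_i$ are below $d/2$ (up to a logarithmic factor at the critical threshold), and $K(\xi) \lesssim (1+|\xi|^2)^{-\min(\sigma_1, \sigma_2)}$ whenever one of the $\sigma_i$ exceeds $d/2$. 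In every case, the strict inequality $\sigma_1 + \sigma_2 > d/2$ guarantees that $K$ is uniformly bounded on $\R^d$, finishing the argument.
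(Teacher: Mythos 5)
Your proof is correct. Note that the paper does not actually prove this lemma: it is merely recalled in the appendix as a classical bilinear estimate, with no argument supplied. Your write-up is the standard complete proof — the weight-splitting inequality $(1+|\xi|^2)^{s/2}\lesssim (1+|\eta|^2)^{s/2}+(1+|\xi-\eta|^2)^{s/2}$ to reduce to $s=0$, Cauchy--Schwarz inside the convolution, and the uniform bound on the Bessel-weight kernel $K(\xi)$ via the three-region decomposition — and all the hypotheses ($s\le s_1,s_2$ to keep the exponents $\sigma_i$ nonnegative, and the strict inequality $s<s_1+s_2-d/2$ to make $\sup_\xi K(\xi)$ finite) are used exactly where they are needed.
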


\subsection{Discrete inequalities}
We now provide a discrete version of the Gagliardo-Nirenberg inequality, which holds for the same sets of parameters as for the continuous one. For a proof or a more complete statement, we refer to \cite{hong2019strichartz}.
\begin{lemma}
Let $0<h\leq 1$, $2 \leq q \leq \infty$ and $0< \theta <1$ such that
\[ \frac{1}{q}+ \frac{\theta s}{d}=\frac{1}{2},    \]
then
\begin{equation} \label{discrete_gagliardo_nirenberg}
\| u \|_{L^q(h\Z^d)} \leq C \| u \|_{L^2(h\Z^d)}^{1-\theta} \| u \|_{\dot{H}^s(h\Z^d)}^{\theta}
\end{equation}
for any suitable function $u$.
\end{lemma}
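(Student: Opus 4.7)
The plan is to deduce the discrete inequality from its classical continuous counterpart by passing through the Shannon interpolation $\mathcal{S}_h$, which embeds $L^2(h\Z^d)$ isometrically into the band-limited subspace of $L^2(\R^d)$, so that one can leverage the usual Gagliardo–Nirenberg estimate on $\R^d$.

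First I would reduce to the case $h=1$ by the scaling $v(k)=u(hk)$ for $k\in\Z^d$. Elementary Fourier manipulations give
\[ \|u\|_{L^q(h\Z^d)}=h^{d/q}\|v\|_{L^q(\Z^d)},\quad \|u\|_{L^2(h\Z^d)}=h^{d/2}\|v\|_{L^2(\Z^d)},\quad \|u\|_{\dot H^s(h\Z^d)}=h^{d/2-s}\|v\|_{\dot H^s(\Z^d)}, \]
and the scaling constraint $1/q+s\theta/d=1/2$ ensures that both sides of the target inequality scale identically in $h$, so that a uniform constant for the case $h=1$ transfers automatically to all $0<h\le 1$.

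Next, working at $h=1$, I would apply a Plancherel–Pólya type inequality: for any $F\in L^q(\R^d)$ whose Fourier transform is supported in $[-\pi,\pi]^d$,
\[ \sum_{k\in\Z^d}|F(k)|^q\;\le\;C_q\,\|F\|_{L^q(\R^d)}^q, \]
with $C_q$ depending only on $q$ and $d$. The standard argument combines a local mean-value bound on the unit cube $Q_k$ centered at $k$, which yields $\sum_k|F(k)|^q\lesssim \|F\|_{L^q(\R^d)}^q+\|\nabla F\|_{L^q(\R^d)}^q$, with Bernstein's inequality $\|\nabla F\|_{L^q}\lesssim\|F\|_{L^q}$ valid for band-limited $F$. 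Applied to $F=\mathcal{S}_1 v$ (for which $\mathcal{S}_1 v(k)=v(k)$), this produces $\|v\|_{L^q(\Z^d)}\lesssim\|\mathcal{S}_1 v\|_{L^q(\R^d)}$. I would then apply the classical continuous Gagliardo–Nirenberg inequality to $\mathcal{S}_1 v\in H^s(\R^d)$ and close the chain by a homogeneous variant of Lemma \ref{boundedness_shannon}: the pointwise equivalence $|\xi|^2\le 4\sum_{j=1}^d\sin^2(\xi_j/2)\le (\pi^2/4)|\xi|^2$ on $[-\pi,\pi]^d$ gives $\|\mathcal{S}_1 v\|_{\dot H^s(\R^d)}\simeq\|v\|_{\dot H^s(\Z^d)}$, while $\|\mathcal{S}_1 v\|_{L^2(\R^d)}=\|v\|_{L^2(\Z^d)}$ by Parseval.

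The principal obstacle is the Plancherel–Pólya estimate itself. It is trivial for $q=2$ (Parseval) and for $q=\infty$ (pointwise evaluation), but the general exponent requires Bernstein's inequality for band-limited functions, which in turn can be obtained from a convolution representation $F=F\ast\phi$ with $\widehat\phi\equiv 1$ on $[-\pi,\pi]^d$ and $\phi\in\mathcal{S}(\R^d)$, combined with Young's inequality. Once Plancherel–Pólya is secured, the remainder of the proof amounts to routine scaling and Fourier bookkeeping.
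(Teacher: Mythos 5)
Your argument is correct, and it is genuinely different from what the paper does: the paper gives no proof of this lemma at all, deferring entirely to the reference \cite{hong2019strichartz}, where the inequality is established by working directly on the lattice. Your route instead transfers the classical continuous Gagliardo--Nirenberg inequality to $h\Z^d$ via the Shannon interpolation, and all three ingredients check out: the scaling $v(k)=u(hk)$ does reduce matters to $h=1$ precisely because the constraint $\tfrac1q+\tfrac{\theta s}{d}=\tfrac12$ makes both sides homogeneous of degree $d/q$ in $h$; the Plancherel--P\'olya bound $\sum_k|F(k)|^q\lesssim\|F\|_{L^q(\R^d)}^q$ for band-limited $F$ is standard (and can even be obtained more directly than via Bernstein, by writing $F=F\ast\phi$ with $\widehat\phi\equiv1$ on $[-\pi,\pi]^d$ and applying H\"older plus $\sup_y\sum_k|\phi(k-y)|<\infty$); and the closing step is exactly the homogeneous version of Lemma~\ref{boundedness_shannon} together with the fractional Gagliardo--Nirenberg inequality on $\R^d$, which covers the endpoint $q=\infty$ since then $\theta s=d/2$ with $\theta<1$ forces $s>d/2$. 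This approach has the merit of being self-contained within the paper's own toolbox (Shannon interpolation plus the continuous inequality), at the cost of invoking the continuous Gagliardo--Nirenberg inequality as a black box; the cited lattice proof avoids that but requires its own discrete Littlewood--Paley/Bernstein machinery.

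One small correction: the sine equivalence you quote is written with both inequalities reversed. On $[-\pi,\pi]^d$ the correct bounds, which follow from $\sin\omega\le\omega\le\tfrac{\pi}{2}\sin\omega$ on $[0,\pi/2]$, are
\[
\frac{4}{\pi^2}\,|\xi|^2\;\le\;4\sum_{j=1}^d\sin^2\!\left(\frac{\xi_j}{2}\right)\;\le\;|\xi|^2 .
\]
This does not affect your conclusion $\|\mathcal{S}_1 v\|_{\dot H^s(\R^d)}\simeq\|v\|_{\dot H^s(\Z^d)}$, but the constants should be stated in the right direction.
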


We also give some Strichartz estimates for the linear discrete Schrödinger equation recently proved in \cite{hong2019strichartz}, which holds uniformly with respect to the parameter $h>0$ of the discretization. 
\begin{lemma} \label{discrete_strichartz_lemma}
Let $0<h\leq 1$, and $\varphi \in L^2(h\Z^d)$. We say that the pair $(q,r)$ is discrete-Schrödinger-admissible if the following conditions hold
\begin{equation} \label{discrete_admissible_pair}
\frac{3}{q}+\frac{d}{r}=\frac{d}{2}, \quad 2\leq q,r \leq \infty,\quad (q,r,d)\neq (2,\infty,3).
\end{equation}
Then for any discrete-Schrödinger-admissible pairs $(q,r)$ and $(\tilde{q},\tilde{r})$, we have the following homogeneous Strichartz estimate
\begin{equation} \label{homogeneous_discrete_strichartz}
\| e^{i t \Delta_h} \varphi \|_{L^q(\R;L^r(h\Z^d))} \leq C \| \varphi \|_{H^{\frac{1}{q}}(h\Z^d)}
\end{equation}
and the inhomogeneous Strichartz estimate
\begin{equation} \label{inhomogeneous_discrete_strichartz}
\left\| \int_0^t e^{i (t-s) \Delta_h} F(s) \dd s \right\|_{L^q(\R;L^r(h\Z^d))} \leq C \| F \|_{\mathcal{C}(\R;H^{\frac{1}{q}}(h\Z^d) )}
\end{equation}
for any suitable function $F$.

\end{lemma}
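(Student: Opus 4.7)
The plan is to follow the standard $TT^*$/Keel-Tao approach: first establish a dispersive estimate for the discrete linear semigroup $e^{it\Delta_h}$, then pass to Strichartz via an abstract interpolation argument. By the discrete Fourier inversion formula, $e^{it\Delta_h}\varphi = K_h(t,\cdot) \ast_h \varphi$ with kernel
\begin{equation*}
K_h(t,a) = \frac{1}{(2\pi)^d}\int_{\T_h^d} e^{i a\cdot \xi - it\omega_h(\xi)}\,\dd\xi, \qquad \omega_h(\xi) = \frac{4}{h^2}\sum_{j=1}^d \sin^2(h\xi_j/2).
\end{equation*}
Rescaling $\xi = \eta/h$ reduces the analysis to a fixed oscillatory integral on $\T^d$ with phase $\omega_1(\eta) = 4\sum_j \sin^2(\eta_j/2)$, a factor $h^{-d}$ accounting for the change of variables in the kernel and time becoming $t/h^2$.

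The pointwise dispersive estimate then reduces to a Van der Corput bound on that integral. The key feature of $\omega_1$ is that $\omega_1''$ vanishes along the hyperplanes $\{\eta_j = \pm \pi/2\}$ while $\omega_1'''$ does not, producing inflection-type degeneracies absent in the continuous case. A one-dimensional Van der Corput analysis yields $|t|^{-1/3}$ decay in each such degenerate direction and $|t|^{-1/2}$ in the nondegenerate ones; the tensor-product combination gives the uniform decay $|K_1(t,n)| \lesssim (1+|t|)^{-d/3}$ for all $n\in\Z^d$. This is the worst case allowed by the discrete geometry and matches precisely the admissibility condition $3/q + d/r = d/2$ in \eqref{discrete_admissible_pair}.

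From here I would localize frequencies dyadically: writing $\varphi = \sum_N P_N \varphi$ with $P_N$ projecting onto $|\xi| \sim N$, a refined analysis gives
\begin{equation*}
\|e^{it\Delta_h} P_N \varphi\|_{L^\infty(h\Z^d)} \lesssim N^{d/3}|t|^{-d/3}\|P_N \varphi\|_{L^1(h\Z^d)}
\end{equation*}
uniformly in $h$. Interpolating with the mass conservation $\|e^{it\Delta_h}\varphi\|_{L^2(h\Z^d)} = \|\varphi\|_{L^2(h\Z^d)}$ and feeding the result into the Keel-Tao abstract theorem produces, for any discrete-Schrödinger-admissible pair $(q,r)$, the frequency-localized estimate $\|e^{it\Delta_h} P_N \varphi\|_{L^q_t L^r_h} \lesssim N^{1/q}\|P_N \varphi\|_{L^2_h}$. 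Summing over dyadic scales via the Littlewood-Paley square-function characterization of $H^s(h\Z^d)$ then yields \eqref{homogeneous_discrete_strichartz}, the factor $N^{1/q}$ being exactly responsible for the $1/q$ loss of derivatives. The inhomogeneous estimate \eqref{inhomogeneous_discrete_strichartz} follows by the Christ-Kiselev lemma or the standard $TT^*$ duality argument, with the classical exclusion of the endpoint $(2,\infty)$ in dimension three.

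The main obstacle is the multidimensional Van der Corput analysis: the degeneracy set of the Hessian of $\omega_1$ is not a finite collection of isolated points but a union of hypersurfaces, so obtaining the decay $|t|^{-d/3}$ uniformly in the direction $a/|a|$ requires a partition of unity on $\T^d$ adapted to this zero set and a careful coordinate-wise application of Van der Corput that tracks how the axes align with the direction of propagation. This is precisely the point carried out in \cite{hong2019strichartz}, building on earlier work of Stefanov-Kevrekidis in dimension one.
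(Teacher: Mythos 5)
The paper does not prove this lemma: it is quoted verbatim from \cite{hong2019strichartz} (``recently proved in [HY19]''), so there is no internal proof to compare against. Your outline reproduces the strategy of that reference — a Van der Corput dispersive bound $|t|^{-d/3}$ for the degenerate phase $\omega_1$, a frequency-localized version losing $N^{d/3}$, Keel--Tao, and Littlewood--Paley summation giving the $N^{1/q}$ loss — and your bookkeeping is consistent: decay $t^{-\sigma}$ with $\sigma=d/3$ yields exactly the admissibility line $\frac3q+\frac dr=\frac d2$ with the endpoint exclusion $(2,\infty,3)$, and the Keel--Tao constant $A^{\frac12-\frac1r}$ with $A=N^{d/3}$ is precisely $N^{1/q}$. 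Two remarks. First, the ``main obstacle'' you identify largely dissolves: since $\omega_1(\eta)=\sum_j 4\sin^2(\eta_j/2)$ and $a\cdot\xi$ both separate in the coordinates, the kernel factors exactly as $K_1(t,n)=\prod_{j=1}^d k_1(t,n_j)$, so the $d$-dimensional bound $(1+|t|)^{-d/3}$ is just the $d$-th power of the one-dimensional Stefanov--Kevrekidis estimate; no partition of unity adapted to the degeneracy hypersurfaces is needed for the unlocalized bound (the genuinely delicate point in \cite{hong2019strichartz} is rather the frequency-localized estimate uniform in $h$). Second, the inhomogeneous estimate in the specific form stated here, with $\|F\|_{\mathcal{C}(\R;H^{1/q}_h)}$ on the right, does not come from Christ--Kiselev or $TT^*$ duality (those produce the dual-exponent $L^{\tilde q'}L^{\tilde r'}$ version); on a bounded time interval it follows directly from Minkowski's integral inequality together with the homogeneous estimate, at the price of a factor $T$ — which is indeed how the paper uses it in the proof of \eqref{nonlinear_strichartz_estimate}. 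As written with $L^q(\R;\cdot)$ and no such factor the displayed inequality should be read as a local-in-time statement.
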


\bibliographystyle{siam}
\bibliography{biblio}

\end{document}